\newcommand{\arnaud}[1]{\todo[color=green!20]{{ #1}}}
\newcommand{\modar}{\color{black}}
\newcommand{\revarn}{\color{black}{}}
\newcommand{\revK}{\color{black}{}}
\theoremstyle{plain}
\newtheorem{theorem}{Theorem}[section]
\newtheorem{lemma}[theorem]{Lemma}
\newtheorem{proposition}[theorem]{Proposition}
\newtheorem{remark}[theorem]{Remark}
\theoremstyle{definition}
\newtheorem{assumption}{}
\DeclareMathOperator*{\argmin}{arg\,min}
\newcommand{\one}{\mathds{1}}
\newcommand{\E}{\mathbb{E}}
\newcommand{\id}[1]{\ensuremath{\mathds{1}_{#1}}}
\renewcommand{\P}{\mathbb{P}}
\newcommand{\ve}{\varepsilon}
\DeclarePairedDelimiter{\abs}{\lvert}{\rvert}
\DeclarePairedDelimiter{\norm}{\lVert}{\rVert}
\DeclarePairedDelimiter{\pairangle}{\langle}{\rangle}
\newcommand{\bracket}[3][]{\pairangle[#1]{{#2,#3}}}
\newcommand\bracket*[2]{\pairangle*{{#1,#2}}}
\title{Drift estimation for rough processes under small noise asymptotic : QMLE approach}
\author[1]{Arnaud Gloter}
\author[2,3]{Nakahiro Yoshida}
\affil[1]{Laboratoire de Math\'ematiques et Mod\'elisation d'Evry, Universit\'e d'Evry
	\footnote{
		Laboratoire de Math\'ematiques et Mod\'elisation d'Evry, CNRS, Univ Evry, 
		Universit\'e Paris-Saclay, 91037, Evry, France. e-mail: arnaud.gloter@univ-evry.fr}
}
\affil[2]{Graduate School of Mathematical Sciences, University of Tokyo
	\footnote{Graduate School of Mathematical Sciences, University of Tokyo: 3-8-1 Komaba, Meguro-ku, Tokyo 153-8914, Japan. e-mail: nakahiro@ms.u-tokyo.ac.jp}
}
\affil[3]{Japan Science and Technology Agency CREST
}
\begin{document}

\maketitle

\begin{abstract}
	We consider a process $X^\ve$ solution of a stochastic Volterra equation with an unknown parameter $\theta^\star$ in the drift function. The Volterra kernel is singular {\revK near zero, exhibiting a behavior comparable to  
		$K_0(u)=cu^{\alpha-1} \id{u>0}$ with $\alpha \in (1/2,1)$.}
	It is assumed that the diffusion coefficient is proportional to $\ve \to 0$.	
Based {\revarn on discrete observations, with a mesh size $h\to0$, of the Volterra process,}
we construct a  Quasi Maximum Likelihood Estimator.
The main step is to assess the error arising in the reconstruction of the path of a semimartingale  from the inversion of the Volterra kernel. We show that this error decreases as $h^{1/2}$ 
{\revarn regardless of the value of $\alpha$.} Then, we can introduce an explicit contrast function, which yields an efficient estimator when $\ve \to 0$. 
\end{abstract}
{
	\small	
	\textbf{\textit{Keywords---}} Volterra processes, Fractional kernel, Parametric estimation, Quasi MLE.
}

\section{Introduction}

Recently, many works have been devoted to differential stochastic equations with fractional derivative. Such equations have the structure
\begin{equation}\label{Eq : Caputo form}
	{}^CD_{0+}^\alpha X_t=b(X_t)+a(X_t)\frac{dB_t}{dt},
\end{equation}
where ${}^CD_{0+}^\alpha$ is the Caputo derivative of order $\alpha \in (1/2,1)$ and $B$ is a standard Brownian motion. These equations can be written in integral form by inverting the Caputo derivative. This manipulation gives rise to a special case of Volterra equations like the one given in \eqref{Eq : Volterra SDE_intro} with a specific kernel $K$ (e.g. see \cite{huongWellposednessRegularitySolutions2023a}). 
The fractional time derivative equation \eqref{Eq : Caputo form} is a suitable model to describe memory properties arising in several  fields {\revarn such as physics, mathematical finance, or life- science.} Among references dealing with applications of Volterra equation is \cite{bakerPerspectiveNumericalTreatment2000a} and references therein for the deterministic case or
{\revarn  \cite{abijaberAffineVolterraProcesses2019,barndorff-nielsenModellingEnergySpot2013,euchCharacteristicFunctionRough2016}}
in the context of energy or financial modeling using stochastic models.
%

The probabilistic properties of the stochastic Volterra models driven by Brownian motions are first studied in the seminal papers
\cite{bergerVolterraEquationsIto1980, bergerVolterraEquationsIto1980b},  for non singular kernels. The model is extended in many ways, by considering more general semimartingale \cite{protterVolterraEquationsDriven1985},  singular kernels
 \cite{cochranStochasticVolterraEquations1995}, non Lipschitz  coefficients \cite{wangExistenceUniquenessSolutions2008, promelStochasticVolterraEquations2023},
infinite dimensional settings
{\revarn \cite{wangAsymptoticBehaviorStochastic2016a, zhangStochasticVolterraEquations2010},} or connection with rough path theory \cite{promelParacontrolledDistributionApproach2021}.

The case of Volterra equations with singular kernels, corresponding to \eqref{Eq : Caputo form}, generates so-called ``rough'' processes, as the solution $X$ admits a H\"older smoothness index $\alpha-1/2$  smaller than the Brownian regularity index $1/2$.
These rough processes have been intensively studied as models for volatility of financial assets 
({\revarn\cite{ eleuchRougheningHeston2018,fukasawaConsistentEstimationFractional2022, gatheralVolatilityRough2018, gatheralQuadraticRoughHeston2020}}).
In this stochastic volatility models the question of estimating the roughness of the path is considered in \cite{chongStatisticalInferenceRough2024a,chongStatisticalInferenceRough2024b}, where the Hurst index $\alpha-1/2$ is estimated from observations of the price process.

%
%

In this work, we intend to address statistical questions on the estimation of the drift function of the model \eqref{Eq : Caputo form}. More formally, we consider a rough process solution of Volterra equation with {\revK kernel $K$}, \begin{equation}\label{Eq : Volterra SDE_intro}
	X^\varepsilon_t=X^\varepsilon_0+\ve \int_0^t K(t-s)a(X^\ve _s) dB_s
	+ \int_0^t K(t-s) b(X_s^\ve,\theta^\star) ds,
\end{equation}
{\revarn where $\theta^\star$ is the parameter we want to estimate.}
{\revK  We assume that the kernel $K$ is singular, with a behavior near $0$ comparable to the one of $K_0(u)=\frac{1}{\Gamma(\alpha)}
	u^{\alpha-1} \id{u>0}$.} Our setting is such that the process is close to a deterministic Volterra model as we assume a small noise asymptotic $\ve \to 0$. The objective is to estimate a parameter $\theta$ in the drift function from a discrete sampling of $X^\ve$ on some fixed interval $[0,T]$, where the sampling step $h\to0$. In the framework of semimartingale models, the small noise asymptotic has already been subject of many studies (see {\revarn \cite{
		genon-catalotParametricInferenceSmall2021,guyParametricInferenceDiscretely2014,hongweiParameterEstimationClass2010,
		kutoyantsIdentificationDynamicalSystems1994,sorensenSmalldiffusionAsymptoticsDiscretely2003b,uchidaInformationCriteriaSmall2004}}).

From a continuous observation of $X^\ve$, it is theoretically possible to compute the Caputo derivative $^CD_{0+}^T X^\ve$ and in turn consider that one observes the semimartingale process \begin{equation*}
{\revarn Z^\ve_t:=\ve \int_0^t a(X^\ve_s)dB_s+\int_0^t b(X^\ve_s,\theta^\star) ds.}
\end{equation*} 
The MLE estimator of this model is thus explicit from Girsanov's theorem.
In the context of a Volterra Ornstein-Uhlenbeck model, this method is used in \cite{zunigaVolterraProcessesApplications2021} in order to estimate the drift parameter, when $T \to \infty$. The consistency of the estimator is proved in \cite{zunigaVolterraProcessesApplications2021}. A related work is \cite{gloterDriftEstimationRough2025}, where  a non optimal trajectory fitting estimator of the drift function is constructed.

Here, we do not assume that a continuous observation of $X^\ve$  is available, which is in practice impossible,
and in turn the exact MLE {\revarn method} is inaccessible. 
The first issue solved in the paper is the 
 reconstruction of the process $Z^\ve$ from the observation $(X^\ve_{ih})_{i=0,\dots,\lfloor T/h \rfloor}$. We prove that there exists  some process $(Z^{\ve,(h)}_t)_t$, based on the observation, which satisfies for $p\ge 1$, and all $\ve\in[0,1]$,
	\begin{equation*}
	\E\left[ \abs*{Z^{\ve,(h)}_t-Z^\ve_t}^p \right] \le c(p) ( \ve^p h^{p/2} + h^{\alpha p}  ). 
\end{equation*}
Let us stress that this approximation result is certainly sharp in rate as it gives, if $\ve$ is constant, an approximation of order $h^{1/2}$ (see Theorem \ref{Th :  Lq norm Y^h Y}) which is a classical Euler rate for a semimartingale. However, the approximation is based on a sampling of $X^\ve$ which is a rough process with H\"older regularity $\alpha-1/2 <1/2$. Thus, the sampling error is reduced in our approximation procedure, mainly due to the inversion of the singular kernel $K$, which induces a smoothing effect.

Next, using this reconstruction of $Z^\ve$ as a first step, we build a contrast function which allows to estimate the drift parameter. The corresponding estimator is asymptotically Gaussian, with rate $\ve^{-1}$ and asymptotic variance given by the inverse of a Fisher information (see Theorem \ref{Thm : normality} and Remark \ref{R : Fisher}).   Let us stress that, in order to deal with the reconstruction errors and the bias induced by the contrast function, one {\revarn needs} a fast sampling condition on $h$ given by $h=o(\ve^{\frac{1}{\alpha^2}})$. Such condition is not surprising in the context of small noise asymptotic (e.g. see \cite{sorensenSmalldiffusionAsymptoticsDiscretely2003b}
 in the diffusion case). 

%

The outline of the paper is the following. In Section
\ref{S : Model}, we give the assumptions on the model.
 In Section \ref{S : Approx} we introduce the approximation procedure by discrete inversion of the Volterra kernel.
The application to the statistical estimation of the drift coefficient is presented in Section \ref{S : Stat appl}. 
In Section \ref{S : num sim}, we provide a Monte Carlo study to assess the quality of the estimator in practice. {\revarn The Appendix 
contains technical results.}


\section{Model} \label{S : Model}
We let $(\Omega,\mathcal{F}, \P)$ be a probability space, endowed with a filtration $\mathbb{F}=(\mathcal{F}_t)_{t\ge0}$ satisfying the usual conditions. We consider on this space an $r$-dimensional standard $\mathbb{F}$-Brownian motion $B$. 

On this space, we define $(X^\ve_t)_{t\in [0,T]}$ as a {\revarn$d$-}dimensional process, solution on $[0,T]$ of the stochastic differential equation
\begin{equation}\label{Eq : Volterra SDE}
X^\varepsilon_t=X^\varepsilon_0+\ve \int_0^t K(t-s)a(X^\ve _s) dB_s
+ \int_0^t K(t-s) b(X_s^\ve,\theta^\star) ds.
\end{equation}
Here, $X_0^\ve =x_0 \in \mathbb{R}^d$, 
$a : \mathbb{R}^d \to \mathbb{R}^d \otimes\mathbb{R}^r$, and 
$b : \mathbb{R}^d \times \Theta \to \mathbb{R}^d$. Here, we use the tensor product $ \mathbb{R}^d \otimes\mathbb{R}^r$ to represent the set of matrices of size $d\times r$.

The parameter $\theta^\star$ belongs to the compact and convex set $\Theta \subset \mathbb{R}^{d_{\Theta}}$. 

{\revK We assume that the kernel $K$ is of rough type. Let $\alpha \in (1/2,1)$, we set $	K_0(u)=\frac{1}{\Gamma(\alpha)}
	u^{\alpha-1} \id{u>0}$. We assume that $K$ is $\mathcal{C}^1$ on $(0,\infty)$ and its behavior can be compared to the one of $K_0$ :
\begin{equation} \label{Eq : comp K K0}
	\forall u\in(0,T],\quad \abs*{K(u)} \le c K_0(u) 
	, \quad  \abs*{K'(u)} \le c \abs*{K'_0(u)}, 
\end{equation}
for some constant $c>0$. depending on $T$.
}
%
Noteworthy, we have
\begin{equation*} 
	\sup_{t \in [0,T]} \int_0^tK(t-s)ds \le c,\quad \sup_{t \in [0,T]} \int_0^tK(t-s)^2ds \le c
\end{equation*}
for some constant $c$ {\revK depending on $T$}.

We assume $\ve\in [0,1]$ and consider the asymptotic framework $\varepsilon\to0$.

We introduce the following assumptions on the coefficients of the Volterra equation.
\begin{assumption} \label{Ass : global Lip} The functions $a$ and $b$ are continuous and
	there exists some $C>0$, such that for all $(x,x')\in \mathbb{R}^d$, $\theta \in \Theta$
	\begin{equation*}
		\abs{a(x)-a(x')} \le C \abs{x-x'},\quad
		\abs{b(x,\theta)-b(x',\theta)} \le C \abs{x-x'}.
	\end{equation*}
\end{assumption}
Let us stress that for $x\in \mathbb{R}^d$, we denote by $\abs{x}$ the Euclidean norm of $x$ given by $\abs{x}=\sqrt{x^* x}$, while for $m \in \mathbb{R}^{d}\times\mathbb{R}^r$ the quantity $\abs{m}$ is the operator norm of the $d\times r$ matrix $m$.

From {\revarn an} application of Theorem 1 in \cite{wangExistenceUniquenessSolutions2008}, we know that under \ref{Ass : global Lip}, the equation \eqref{Eq : Volterra SDE} admits a unique progressively measurable process $(X_t^\ve)_t$ as solution. 
Moreover, the solution admits finite moment of any order by Lemma 2.2. of \cite{wangExistenceUniquenessSolutions2008},
\begin{equation} \label{Eq : moment Volterra}
	\forall p \ge1, \quad \sup_{t \in [0,T]} \E[\abs{X^\ve_t}^p] \le c(p).	
\end{equation}
The constant $c(p)$ depends on $p$ and on the coefficients of the SDE \eqref{Eq : Volterra SDE} and on the kernel $K$. As $\varepsilon \in [0,1]$, the constant $c(p)$ in \eqref{Eq : moment Volterra} is uniform with respect to $\varepsilon$.

It is also known from Proposition 4.1 in \cite{richardDiscretetimeSimulationStochastic2021},
{\revK together with \eqref{Eq : comp K K0}, that} 
\begin{equation} \label{Eq : moment increment Volterra without eps}
	\E\left( \abs*{X^\ve_t-X^\ve_s}^p\right) \le c(p) \abs*{t-s}^{p (\alpha-1/2)},
\end{equation}
where the constant $c(p)$ depends on the coefficient of the stochastic differential equation. 
{\revarn As $\ve\in[0,1]$, the constant $c(p)$ can be taken independent of $\ve$. Taking into account more precisely the dependence of $\ve$, it is possible to get the sharper result 
\eqref{eq : control incre Xve}.
From \eqref{Eq : moment increment Volterra without eps}, 
we know that 
the process $X^\ve$ admits $\alpha'$-H\"older trajectories,
for any $\alpha' \in (0,\alpha-1/2)$.}
In particular, we are dealing with processes that are rougher than {\revarn Brownian motion.}
%
%
%

\section{Approximate inversion of {\revarn a} rough operator} \label{S : Approx} 
{\revarn In this section, we establish theoretical upper bounds on how the kernel $K$ can be inverted }

\subsection{Main result}

In this section we forget the asymptotic $\ve\to0$ and the underlying statistical problem. Our {\revarn goal} is to recover a semimartingale from a discrete sampling of a rough process.

Assume that $X\in\mathbb{R}^d$ is a solution of 
\begin{equation}\label{Eq : Volterra SDE_general}
	X_t=X_0+ 
	\int_0^t K(t-s)a(X_s) dB_s
	+ \int_0^t K(t-s) b(X_s) ds,
\end{equation}
where $a$ and $b$ are globally Lipschitz.
To shorten the notation, we denote by $Z$ the semimartingale $Z_t=	\int_0^ta(X_s) dB_s
+ \int_0^t b(X_s) ds$, so that  \eqref{Eq : Volterra SDE_general} {\revarn can be written as}
\begin{equation}
	\label{Eq : X as K star Z}
	X_t=X_0 + \int_0^t K(t-s)dZ_s.
\end{equation}
It is useful for {\revarn the} statistical problem to recover the values of $Z$ from the observation of $X$. 
{\revK  We introduce some useful notations and assumptions in the context of Volterra equations. We let $f \star g (t)=\int_0^t f(t-s) g(s)ds$ denote the convolution of two locally integrable functions on $(0,\infty)$. 	It is known that the kernel $K_{\revK 0}(u)=\frac{1}{\Gamma(\alpha)}
u^{\alpha-1} \id{u>0}$ admits a first kind resolvent kernel $L_{\revK 0}$ with explicit expression 
$L_{\revK 0}(u)=\frac{u^{-\alpha}}{\Gamma(1-\alpha)} \id{u>0} $, which satisfies $L_{\revK 0}\star K_{\revK 0}(t)=K_{\revK 0}\star L_{\revK 0}(t)=1$ for all $t > 0$.} {\revK We introduce the assumption that the kernel $K$ admits a first kind resolvent.
	\begin{assumption}\label{Ass : kernel resolvent}
		There exists a measurable function $L : (0,\infty) \to \mathbb{R}$, which is in $\mathbf{L}^1((0,T])$ for all $T > 0$ and such that $L\star K (t)=1$ for all $t > 0$. Moreover, we assume 
		\begin{equation}  \label{Eq : comp L L0}
			\forall u \in (0,T],~ \abs{L(u)}\le c L_0(u),
		\end{equation}
		for some constant $c$ depending on $T$.
	\end{assumption}
Conditions on $K$ ensuring the existence of the first kind resolvent $L$ may be found in \cite{gripenbergVolterraEquationsFirst1980}. 
As $L$ is an inverse of $K$, the upper bound constraints \eqref{Eq : comp L L0} and \eqref{Eq : comp K K0} are opposite. We prove in Proposition
\ref{Prop : suff cond for A2}, that if $K$ is some perturbation of $K_0$, then \eqref{Ass : kernel resolvent} is valid. 
}

Now, it is possible to recover the path of $Z$ from the observation of $X$ by using a stochastic convolution as done in
\cite{abijaberAffineVolterraProcesses2019}. 
We repeat here the idea, which is the starting point of our approximation scheme.
First, we define the stochastic convolution
$ J \star dM(t)=\int_0^t
J(t-s) dM_s$
 for $M$ a continuous semimartingale process with $\bracket*{M}{M}_t=\int_0^t a_s ds$, where $a$ is a locally bounded process $s \mapsto a_s$, 
and $J : [0,\infty) \to  \mathbb{R}$ any locally  ${\revarn\mathbf{L}}^2$ kernel. 
With this notation,  we have $X_t=X_0+K\star dZ(t)$. We define $Y_t:=\int_0^tL(t-s)(X_s-X_0)ds=L\star\overline{X}(t)$ with $\overline{X}_s=X_s-X_0$. Now,
\begin{align}\nonumber	
	Y_t&=L \star \overline{X}(t)\\	
	\nonumber
& =L \star \big( K \star dZ\big)(t)\\
\label{Eq : using stoch associativity}
& =\big(L \star  K \big) \star dZ(t)\\
\nonumber
& = \one_{\{\cdot >0\}} \star dZ (t)=\int_0^t \one_{u >0} dZ_u=Z_t-Z_0=Z_t.
\end{align}
To get the line \eqref{Eq : using stoch associativity}, we use the associativity of the stochastic convolution as shown in Lemma 2.1 of
\cite{abijaberAffineVolterraProcesses2019}.

This inversion of the kernel allowing to recover $Z$ from the observation of $X$ is unfeasible if $X$ is not continuously observed. However, we now introduce an approximate inversion based on a discrete observation of $X$. 

Let {\revarn $0<h<1$} be the sampling step and we denote $\varphi_h(t)=h \times \lfloor
\frac{t}{h} \rfloor$ which is such that $\varphi_h(t) \le t < \varphi_h(t)+h$ and $\varphi_h(t) \in h \mathbb{N}$.

The discrete observation of the process $(X_t)_{t \in [0,T]}$ on the grid with step $h$ is equivalent to the continuous observation of the process $(X^{(h)}_t)_{t \in [0,T]}$ with $X^{(h)}_t=X_{\varphi_h(t)} $.
We define 
\begin{equation} \label{Eq : def Y^h}
	Z^{(h)}_t=\int_0^t L(t-s) (X^{(h)}_s -X^{(h)}_0)ds.
\end{equation}
We expect that for small $h$, the process $Z^{(h)}$ is close to $Z_t$.
The main result about the quality of approximation is the following.
\begin{theorem}\label{Th :  Lq norm Y^h Y}
{\revK Assume \ref{Ass : global Lip} and \ref{Ass : kernel resolvent}.}
	Then, for any $p\ge 1$, {\revarn there exists $c(p)>0$ such that for all $t\in[0,T]$,}
	 \begin{equation} \label{Eq : Lp Y - Z in statement}
	\norm{Z^{(h)}_t-Z_t}_{\mathbf{L}^p} \le
		\ c(p) h^{1/2}  \sup_{s\in[0,T]} \norm{a(X_s)}_{\mathbf{L}^p} 
		+
		c(p) h^{\alpha}  \sup_{s\in[0,T]} \norm{b(X_s)}_{\mathbf{L}^p} .		
	\end{equation}
	In particular, it gives
	\begin{equation}\label{Eq : Lp Y - Z in statement particular}
		\sup_{t \in [0,T]} \norm{	Z^{(h)}_t - Z_t }_{\mathbf{L}^p}
		\le c(p,a,b) h^{1/2}
	\end{equation}
{\revarn  for some constant $ c(p,a,b)$ depending on $p$ and the coefficients of the SDE.}
\end{theorem} 
{\revarn Remark that the constant $c(p)$ in \eqref{Eq : Lp Y - Z in statement} depends only on $p$ and {\revK the kernel K}.} This theorem will be a consequence of the representation of $Z^{(h)}$ using convolution kernels, with explicit control on this kernel. 
\begin{remark} \label{rem : certainly rate optimal} 
Let us stress that the rough process $X$ is $\alpha'$-H{\"o}lder for
 $\alpha'<\alpha-1/2<1/2$ and {\revarn may therefore} be very irregular for $\alpha$ close to $1/2$. In this case the quality of {\revarn the} approximation $X$ by $X^{(h)}$ is poor, with an error of magnitude $h^{\alpha-1/2}$, {\revarn which is} thus much larger than $h^{1/2}$.  By Theorem \ref{Th :  Lq norm Y^h Y}, we see that the magnitude of the error is reduced in the inversion process as the error of approximation of $Z_t$ by $Z^{(h)}_t$ is of magnitude $h^{1/2}$. As the process $Z$ is a Brownian semimartingale, a direct sampling with step $h$ would produce an error of magnitude $h^{1/2}$. For this reason, we conjecture that the $h^{1/2}$ rate in Theorem \ref{Th :  Lq norm Y^h Y} might be {\revarn sharp,} as  we do not expect the approximation of $Z$ based on the sampling of $X$ to be {\revarn more accurate}
 than the approximation {\revarn obtained from} the sampling of $Z$ itself.
\end{remark}

\subsection{Representation as convolution}

First, we have to generalize the convolution notation to {\revarn kernels with  two variables.} 
{\revarn 
Let $\widetilde{K} : (0,T]^2 \mapsto \mathbb{R} $ and $\widetilde{L} : (0,T]^2 \mapsto \mathbb{R} $ be measurable functions such that
$\sup_{u\in(0,T]} \norm*{\widetilde{K}(u,\cdot)}_{\mathbf{L}^2((0,T])}<\infty$, and $\sup_{u\in(0,T]} \norm*{\widetilde{L}(u,\cdot)}_{\mathbf{L}^1((0,T])}<\infty$.
Then,  
 we can define the $\widetilde{L} \star \widetilde{K} (t,s) = \int_0^T \widetilde{L}(t,u)  \widetilde{K}(u,s) du$ for $(s,t)\in(0,T]^2$ and
 check  $\sup_{t\in(0,T]} \norm*{\widetilde{L} \star \widetilde{K}(t,\cdot)}_{\mathbf{L}^2((0,T])}<\infty$.
If the kernels are one sided, which means that $\widetilde{L}(t,s)=0$ and $\widetilde{K}(t,s)=0$ for $s>t$, then $\widetilde{L} \star \widetilde{K}$ is one sided as well. 
Also if $\widetilde{K}(s,t)=\widetilde{K}^1(t-s)\one_{0<s<t}$ and $\widetilde{L}(s,t)=\widetilde{L}^1(t-s)\one_{0<s<t}$ for $\widetilde{K}^1 \in \mathbf{L}^2((0,T])$ and $\widetilde{L}^1 \in   \mathbf{L}^1((0,T])$, then,
$\widetilde{L} \star \widetilde{K} (t,s) = \widetilde{L}^1\star \widetilde{K}^1 (t-s)$, where $\widetilde{L}^1\star \widetilde{K}^1$ is the usual convolution on $[0,\infty)$. In particular $\widetilde{L} \star \widetilde{K} (t,0) = \widetilde{L}^1\star \widetilde{K}^1 (t)$ relates the two-variable convolution with the classical one. 
We can also consider 
$\widetilde{L}^1 \star \widetilde{K} (t,s) = \widetilde{L} \star \widetilde{K} (t,s) =\int_0^t	 \widetilde{L}^1(t-u)\widetilde{K}(u,s)du$, and 
define if $M$ is a semi martingale,
$\widetilde{K} \star dM (t,s)= \int_s^T \widetilde{K}(t,u) dM_u$, which is  $\int_s^t \widetilde{K}(t,u) dM_u$ if the kernel is one sided.
}

Let us set 
\begin{align}\label{Eq : def Kh}
	K^{(h)}(t,s)&=K(\varphi_h(t)-s) \one_{0\le s<\varphi_h(t)}, \\
	\label{Eq : def g^h}
	g^{(h)}(t,s)&=L\star K^{(h)} (t,s)=\int_s^t L(t-u) K^{(h)}(u,s) du,
\end{align}
where we recall {\revK that $L$ is the first kind resolvent defined in \ref{Ass : kernel resolvent}.}
The following proposition shows how to write $X^{(h)}$ using a bi-indexed kernel, and generalizes in that context the stochastic associativity argument appearing in \eqref{Eq : using stoch associativity}.
\begin{proposition} {\revK Assume \ref{Ass : global Lip} and \ref{Ass : kernel resolvent}.}
Let us denote $\overline{X}^{(h)}_t=X_t^{(h)}-X_0^{(h)}$. Then,  we have for all $t \in [0,T]$,
	\begin{align} \label{Eq : X^h as kernel}
		\overline{X}^{(h)}_t&=K^{(h)}\star d Z (t,0),\\
		\label{Eq : Y^h as kernel}
		{Z}^{(h)}_t&=g^{(h)}\star d Z (t,0)=\int_0^t g^{(h)}(t,s) dZ_s.
	\end{align}
\end{proposition}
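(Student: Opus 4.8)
The plan is to obtain \eqref{Eq : X^h as kernel} directly from the continuous representation \eqref{Eq : X as K star Z}, and then to derive \eqref{Eq : Y^h as kernel} by inserting \eqref{Eq : X^h as kernel} into the definition \eqref{Eq : def Y^h} of $Z^{(h)}$ and exchanging the order of a Lebesgue and a stochastic integral, the resulting kernel being $g^{(h)}=L\star K^{(h)}$ by construction \eqref{Eq : def g^h}. For \eqref{Eq : X^h as kernel}: since $\varphi_h(0)=0$ one has $X^{(h)}_0=X_0$, hence $\overline{X}^{(h)}_t=X_{\varphi_h(t)}-X_0$, and applying \eqref{Eq : X as K star Z} at time $\varphi_h(t)$ gives $\overline{X}^{(h)}_t=\int_0^{\varphi_h(t)}K(\varphi_h(t)-s)\,dZ_s$; by the very definition \eqref{Eq : def Kh}, the integrand equals $K^{(h)}(t,s)$ for $s\in[0,t]$, so $\overline{X}^{(h)}_t=\int_0^t K^{(h)}(t,s)\,dZ_s=K^{(h)}\star dZ(t,0)$.

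For \eqref{Eq : Y^h as kernel}, substituting this into \eqref{Eq : def Y^h} (again with $X^{(h)}_0=X_0$) yields
\[
Z^{(h)}_t=\int_0^t L(t-s)\Big(\int_0^s K^{(h)}(s,u)\,dZ_u\Big)\,ds ,
\]
and the claim reduces to the Fubini-type interchange
\[
\int_0^t L(t-s)\Big(\int_0^s K^{(h)}(s,u)\,dZ_u\Big)ds=\int_0^t\Big(\int_u^t L(t-s)K^{(h)}(s,u)\,ds\Big)dZ_u=\int_0^t g^{(h)}(t,u)\,dZ_u ,
\]
the discretized analogue of the associativity step \eqref{Eq : using stoch associativity}, which follows from the stochastic Fubini theorem (Lemma 2.1 in \cite{abijaberAffineVolterraProcesses2019}). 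To invoke it one checks the integrability of the integrand: writing $d\langle Z^i,Z^i\rangle_u=(aa^\ast)^{ii}(X_u)\,du$ and using $\int_0^{\varphi_h(s)}K(\varphi_h(s)-u)^2\,du\le c$ and $\int_0^{\varphi_h(s)}K(\varphi_h(s)-u)\,du\le c$ uniformly in $s$ (the bounds on $K$ recalled in Section \ref{S : Model}), one gets, a.s.,
\[
\int_0^t L(t-s)\Big(\int_0^s K^{(h)}(s,u)^2 (aa^\ast)(X_u)\,du\Big)^{1/2}ds\le c\,\sup_{[0,T]}\abs{a(X_\cdot)}\int_0^t L(t-s)\,ds<\infty ,
\]
and similarly for the finite variation part of $Z$ with $\int_0^s\abs{K^{(h)}(s,u)}\,du$ in place of the square; here only local integrability of $L$ is used (valid since $\alpha<1$), not square integrability. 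Identifying the kernel produced by the swap with $L\star K^{(h)}=g^{(h)}$ via \eqref{Eq : def g^h} finishes the proof.

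The only genuinely delicate point is this interchange: unlike in \eqref{Eq : using stoch associativity}, where associativity was applied to the resolvent pair $(L,K)$ and produced the bounded kernel $\one_{\{\cdot>0\}}$, here the singular kernel $L\notin\mathbf{L}^2_{\mathrm{loc}}$ is convolved against $K^{(h)}$, so the justification must rest on the $\mathbf{L}^2$-in-$u$ boundedness of $K^{(h)}$ together with $L\in\mathbf{L}^1_{\mathrm{loc}}$, as above. An equivalent, even more elementary route is to note that $s\mapsto\overline{X}^{(h)}_s$ is piecewise constant, equal to $X_{kh}-X_0$ on $[kh,(k+1)h)$; then the outer integral is the finite sum $Z^{(h)}_t=\sum_{k:\,kh<t}(X_{kh}-X_0)\int_{kh}^{(k+1)h\wedge t}L(t-s)\,ds$, and inserting $X_{kh}-X_0=\int_0^t\one_{\{u<kh\}}K(kh-u)\,dZ_u$ and pulling the deterministic coefficients inside each stochastic integral makes the interchange a triviality; summing the integrands over $k$ recovers $\int_0^t g^{(h)}(t,u)\,dZ_u$, since $\sum_k\one_{\{u<kh\}}K(kh-u)\int_{kh}^{(k+1)h\wedge t}L(t-s)\,ds=\int_u^t L(t-s)K^{(h)}(s,u)\,ds=g^{(h)}(t,u)$.
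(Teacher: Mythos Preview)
Your proof is correct and follows essentially the same route as the paper: the first identity comes directly from the definition of $K^{(h)}$ and \eqref{Eq : X as K star Z}, and the second by substituting into \eqref{Eq : def Y^h} and applying a stochastic Fubini theorem (the paper invokes Theorem~65 in \cite{protterStochasticIntegrationDifferential2005} rather than Lemma~2.1 of \cite{abijaberAffineVolterraProcesses2019}, but the justification via $\int_0^s K^{(h)}(s,u)^2\,du\le c$ is the same). Your additional piecewise-constant argument, which reduces the interchange to a finite sum and hence bypasses stochastic Fubini altogether, is a nice elementary alternative that the paper does not include.
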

	\begin{proof}
{\revarn First, we prove \eqref{Eq : X^h as kernel}, 
	which} is a consequence of the notations introduced earlier. Indeed, recalling \eqref{Eq : X as K star Z}, we can write $\overline {X}^{(h)}_t=X_{\varphi_h(t)}-X_0=\int_0^{\varphi_h(t)} K(\varphi_h(t)-s) d Z_s= \int_0^t K^{(h)}(t,s)dZ_s=K^{(h)}\star dZ (t,0)$, where we used \eqref{Eq : def Kh}.
		
		To get \eqref{Eq : Y^h as kernel}, we write
		\begin{align}\nonumber
			Z^{(h)}_t&=\int_0^t L(t-s) (X^{(h)}_s -X^{(h)}_0)ds, \quad	\text{{\revarn recalling} \eqref{Eq : def Y^h},}
			\\\nonumber
			&=\int_0^t L(t-s) \left(\int_0^s K^{(h)}(s,u) dZ_u\right)ds, \quad	\text{by \eqref{Eq : X^h as kernel},}
			\\\nonumber
	&=\int_0^t L(t-s) \left(\int_0^t K^{(h)}(s,u)\one_{u <s} dZ_u\right)ds, 	\quad \text{as the kernel $K^{(h)}$ is one sided,}
			\\\nonumber
		&=\int_0^t \left(\int_0^t  L(t-s) K^{(h)}(s,u)\one_{u <s} ds\right)dZ_u, 
		\\ \nonumber &\quad\quad\quad\quad\quad\quad\text{using stochastic Fubini theorem (see Theorem 65 in
			\cite{protterStochasticIntegrationDifferential2005}),}
			\\ \nonumber
			&=\int_0^t L\star K^{(h)}(t,u) dZ_u=\int_0^t g^{(h)}(t,u)dZ_u, \quad \text{ by \eqref{Eq : def g^h}}.
			\end{align}
		Remark that the application of the stochastic Fubini theorem (see Theorem 65 in
		\cite{protterStochasticIntegrationDifferential2005}) is subject to an integrability condition on the integrand. A sufficient condition is
		$\int_0^{\revarn t} \int_{{\revarn 0}}^t L({\revarn t-s})K^{(h)}(s,u)^2 ds du < \infty$. This condition can be checked using the Fubini-Tonelli theorem and the fact {\revK $L$ is in $\mathbf{L}^{1}((0,T])$ together with 
		$\int_0^s K^{(h)}(s,u)^2du=\int_0^{\varphi_h(s)}K(\varphi_h(s)-u)^2 du\le 
		 c \int_0^{\varphi_h(s)}K_0(\varphi_h(s)-u)^2 du \le
		c\int_0^T K_0(t)^2 dt <\infty$, where we used \eqref{Eq : comp K K0}.}	
	\end{proof}
{\revarn Using} \eqref{Eq : Y^h as kernel}, we see that
\begin{equation*}
	Z^{(h)}_t-Z_t=\int_0^t (g^{(h)}(t,u)-1) dZ_u.
\end{equation*} 
It leads us to compare $g^{(h)}(t,u)=L\star K^{(h)}(t,u)$ with $1=L\star K (t)$.
{\revarn An} additional notation is necessary in order to write down our assessment on $g^{(h)}(t,u)-1$. 
For $u \in [0,T]$, we denote $\chi_h(u)=u-\varphi_h(u) \in [0,h)$.
\begin{proposition} \label{Prop : maj g - 1}  
{\revK Assume \ref{Ass : kernel resolvent}.}	There exists {\revarn a constant $C>0$ depending only on the kernel $K$,} such that for all $0\le u < t \le T$,
	\begin{equation*}
		\abs*{g^{(h)}(t,u)-1} \le C \left[\left(\frac{h}{t-u}\right)^\alpha \wedge 1 \right]  + 
		Ch {\revK K_0}(h-\chi_h(u)) \left[\left(\frac{1}{t-u}\right)^\alpha \wedge \left(\frac{1}{h}\right)^\alpha\right].
	\end{equation*} 
\end{proposition}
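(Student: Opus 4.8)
The plan is to compare $g^{(h)}(t,u)=\int_u^t L(t-v)\,K^{(h)}(v,u)\,dv$ with the exact resolvent identity $\int_u^t L(t-v)\,K(v-u)\,dv=L\star K(t-u)=1$, valid for all $t>u$. Subtracting the two, and using that $K^{(h)}(v,u)=K(\varphi_h(v)-u)\,\one_{\{\varphi_h(v)>u\}}$ vanishes for $v$ in the cell containing $u$ (i.e. for $v\in[\varphi_h(u),\varphi_h(u)+h)$) and equals $K(\varphi_h(v)-u)$ for $v\ge\varphi_h(u)+h$, one gets
\begin{align*}
 g^{(h)}(t,u)-1
 &= -\int_u^{(\varphi_h(u)+h)\wedge t} L(t-v)\,K(v-u)\,dv \\
 &\qquad + \int_{(\varphi_h(u)+h)\wedge t}^{t} L(t-v)\big[K(\varphi_h(v)-u)-K(v-u)\big]\,dv
 \;=:\; -I_1+I_2 ,
\end{align*}
with $I_1\ge0$ and, since $K$ is non-increasing and $\varphi_h(v)\le v$, also $I_2\ge0$. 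If $t\le\varphi_h(u)+h$ then $t-u\le h$, $I_2=0$, $g^{(h)}(t,u)=0$, so $|g^{(h)}(t,u)-1|=1$ while $(h/(t-u))^\alpha\wedge1=1$, and there is nothing to prove. Thus I may assume $t>\varphi_h(u)+h$.

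Next I would dispose of the short-range regime $h-\chi_h(u)<t-u\le4h$ by a crude bound: using $K(\varphi_h(v)-u)\le K(h-\chi_h(u))$ for $v\ge\varphi_h(u)+h$,
\begin{equation*}
 g^{(h)}(t,u)\le K(h-\chi_h(u))\int_0^{t-\varphi_h(u)-h} L(w)\,dw\le C\,h^{1-\alpha}K(h-\chi_h(u))= C\,hK(h-\chi_h(u))\,h^{-\alpha},
\end{equation*}
and since $t-u\le4h$ one has $h^{-\alpha}\le4^\alpha\big[(t-u)^{-\alpha}\wedge h^{-\alpha}\big]$, so the right-hand side is absorbed into the second term of the claimed bound while the constant $1$ is absorbed into the first. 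It therefore remains to treat $t-u>4h$, a range in which $t-u-w\ge\tfrac34(t-u)$ for $w\in[0,h]$ and $\varphi_h(v)-u\ge\tfrac14(t-u)$ for $v\ge u+\tfrac12(t-u)$.

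For $I_1$, the change of variables $w=v-u$ gives $I_1=\tfrac{1}{\Gamma(\alpha)\Gamma(1-\alpha)}\int_0^{h-\chi_h(u)}(t-u-w)^{-\alpha}w^{\alpha-1}\,dw$; bounding $(t-u-w)^{-\alpha}\le C(t-u)^{-\alpha}$ and integrating $w^{\alpha-1}$ over an interval of length $\le h$ yields $I_1\le C(h/(t-u))^\alpha$. For $I_2$ I would split the integration domain at $v^\ast:=u+\tfrac12(t-u)$. On $[v^\ast,t]$ I use the pointwise bound $0\le K(\varphi_h(v)-u)-K(v-u)\le\tfrac{1-\alpha}{\Gamma(\alpha)}\,h\,(\varphi_h(v)-u)^{\alpha-2}\le C\,h\,(t-u)^{\alpha-2}$ (from $K(x)-K(y)=\tfrac1{\Gamma(\alpha)}\int_x^y(1-\alpha)z^{\alpha-2}dz$ with $y-x=\chi_h(v)<h$), together with $\int_{v^\ast}^t L(t-v)\,dv\le C(t-u)^{1-\alpha}$, obtaining a contribution $\le C\,h\,(t-u)^{-1}\le C(h/(t-u))^\alpha$. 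On $[\varphi_h(u)+h,v^\ast]$ I instead bound $L(t-v)\le L(\tfrac12(t-u))=C(t-u)^{-\alpha}$ and use the ``total mass'' estimate
\begin{equation*}
 \int_{\varphi_h(u)+h}^{t}\big[K(\varphi_h(v)-u)-K(v-u)\big]\,dv\;\le\;h\,K(h-\chi_h(u)),
\end{equation*}
which yields a contribution $\le C\,h\,K(h-\chi_h(u))(t-u)^{-\alpha}$; since $t-u>h$ this is exactly the second term of the statement. Adding $I_1$ and the two pieces of $I_2$ completes the estimate.

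The total-mass estimate above is the crux of the proof, and the step I expect to require the most care. I would establish it by freezing $\varphi_h$ cell by cell: writing $t_j=jh$ and $k=\lfloor u/h\rfloor$, on each cell $[t_j,t_{j+1})$ with $j\ge k+1$ the integrand $K(t_j-u)-K(v-u)$ is at most $K(t_j-u)-K(t_{j+1}-u)=\tfrac1{\Gamma(\alpha)}\big[(t_j-u)^{\alpha-1}-(t_{j+1}-u)^{\alpha-1}\big]$, so the cell integral is at most $\tfrac{h}{\Gamma(\alpha)}\big[(t_j-u)^{\alpha-1}-(t_{j+1}-u)^{\alpha-1}\big]$; summing from $j=k+1$ up to the cell containing $t$ telescopes and leaves $\le\tfrac{h}{\Gamma(\alpha)}(t_{k+1}-u)^{\alpha-1}=hK(h-\chi_h(u))$, because $t_{k+1}-u=\varphi_h(u)+h-u=h-\chi_h(u)$. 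This telescoping is exactly what turns the non-integrable $z^{\alpha-2}$ growth of $-K'$ near $u$ into the weaker $z^{\alpha-1}$ growth carried by $K(h-\chi_h(u))$; it accounts for the ``resonant'' second term in the statement, which cannot be dropped since $g^{(h)}(t,u)$ genuinely blows up when $u$ sits just below a grid point ($\chi_h(u)\uparrow h$).
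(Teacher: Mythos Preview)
Your proof is correct, but the decomposition differs from the paper's. The paper rescales to $\xi=(v-u)/(t-u)\in[0,1]$ and splits the integral into three pieces according to $\xi\in[0,\tfrac{h-\chi_h(u)}{t-u}]$, $\xi\in[\tfrac{h-\chi_h(u)}{t-u},\tfrac{3h-\chi_h(u)}{t-u}]$, and $\xi\in[\tfrac{3h-\chi_h(u)}{t-u},1]$. The first and third pieces each yield the $(h/(t-u))^\alpha\wedge1$ contribution (the first because $K^{(h)}\equiv0$ there, the third via the mean-value bound on $K$, exactly as in your far piece). The middle piece---the two grid cells immediately to the right of $u$'s cell---is what produces the $hK(h-\chi_h(u))$ term in the paper, simply by bounding both $K^{(h)}$ and $K$ by the common value $K(h-\chi_h(u))$ on that short interval. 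By contrast, you split $I_2$ at the midpoint $v^\ast=u+\tfrac12(t-u)$ and extract the singular term from a telescoping ``total mass'' identity over \emph{all} cells, after pulling out the sup of $L$. Both routes are valid; the paper's localises the singular contribution to the two dangerous cells and treats short and long range in one pass, while your telescoping argument is cleaner in that it explains structurally why the whole sum of cell increments collapses to $hK(h-\chi_h(u))$, at the cost of a separate short-range case.
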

\begin{proof}
	To have homogeneous notations, we set $K(t,u)=K(t-u)\one_{0\le u <t}$, and it gives, recalling \eqref{Eq : def g^h}
	\begin{equation*}
	\Xi^{(h)}(t,u):= g^{(h)}(t,u)-1=L\star(K^{(h)}-K)(t,u)=\int_u^t L(t-v)( K^{(h)}(v,u)-K(v,u) )dv, 
		 	\end{equation*} 
for $0 \le u <t$, and where we used $L\star K(t,u)=\one_{0\le u < t} $.
We set $\xi=(v-u)/(t-u)$ and deduce
	\begin{equation*}
	\Xi^{(h)}(t,u)= (t-u) \int_0^1 L((1-\xi) (t-u))\big(K^{(h)}(\xi(t-u)+u,u)-K(\xi(t-u)+u,u) \big)d\xi.
\end{equation*} 
We use that {\revK $L(s)\le c L_0(s) \le C s^{-\alpha}$ to write, with a constant $C$ which may change from line to line,} 
	\begin{align*}
	\abs*{\Xi^{(h)}(t,u)}&\le C (t-u)^{1-\alpha} \int_0^1 (1-\xi)^{-\alpha} \abs[\Big]{K^{(h)}(\xi(t-u)+u,u)-K(\xi(t-u)+u,u)}d\xi
	\\
	& = C \sum_{i=1}^3 e_i^{(h)}(t,u),
\end{align*} 
where
\begin{align*}
	e_1^{(h)}(t,u)&:=\begin{multlined}[t]
	(t-u)^{1-\alpha} \int_0^{\frac{h-\chi_h(u)}{t-u} \wedge 1}  (1-\xi)^{-\alpha} \abs[\Big]{K^{(h)}(\xi(t-u)+u,u)-\\K(\xi(t-u)+u,u)}d\xi,
	\end{multlined} 
	\\
	e_2^{(h)}(t,u)&:= 
	\begin{multlined}[t] (t-u)^{1-\alpha} \int_{\frac{h-\chi_h(u)}{t-u} \wedge 1}^{\frac{3h-\chi_h(u)}{t-u} \wedge 1}  (1-\xi)^{-\alpha}
	\abs[\Big]{K^{(h)}(\xi(t-u)+u,u)-\\K(\xi(t-u)+u,u)}d\xi,
\end{multlined}
	\\
	e_3^{(h)}(t,u)&:= 
	\begin{multlined}[t] (t-u)^{1-\alpha} \int_{\frac{3h-\chi_h(u)}{t-u} \wedge 1}^1  (1-\xi)^{-\alpha} \abs[\Big]{K^{(h)}(\xi(t-u)+u,u)-\\K(\xi(t-u)+u,u)}d\xi.
\end{multlined}  
\end{align*}

We focus on $e_1^{(h)}(t,u)$. For $\xi \in [0, \frac{h-\chi_h(u)}{t-u})$, we have 	$\xi(t-u)+u \in [u,h-\chi_h(u)+u)=[u,\varphi_h(u)+h) $ using $\chi_h(u)=u-\varphi_h(u)$. We deduce that $\varphi_h(\xi(t-u)+u) =\varphi_h(u)\le u$ and consequently $K^{(h)}(\xi(t-u)+u,u)=K(\varphi_h(\xi(t-u)+u)-u)\one_{0\le u<\varphi_h(\xi(t-u)+u)}=0$. Consequently,
\begin{align}\nonumber
	e_1^{(h)}(t,u)&=  (t-u)^{1-\alpha} \int_0^{\frac{h-\chi_h(u)}{t-u} \wedge 1}  (1-\xi)^{-\alpha} \abs[\Big]{K(\xi(t-u)+u,u)}d\xi,
	\\ \label{Eq : control e1h}
	& {\revK \le  C} \int_0^{\frac{h-\chi_h(u)}{t-u} \wedge 1}  (1-\xi)^{-\alpha} \xi^{\alpha-1}d\xi,
\end{align}
where we used {\revK $\abs*{K(\xi(t-u)+u,u)}=\abs*{K(\xi(t-u))}
	\le K_0(\xi(t-u)) \le C \xi^{\alpha-1}(t-u)^{\alpha-1}$.} 
	 Since $\int_0^1 (1-\xi)^{-\alpha}\xi^{1-\alpha}d\xi <\infty$, we get $e_1^{(h)}(t,u) \le C$.  Moreover, in the case where $t-u \ge 2h$, we have $\frac{h-\chi_h(u)}{t-u}\le \frac{h}{2h}=\frac{1}{2}$ and we deduce from \eqref{Eq : control e1h}
\begin{equation*}
	e_1^{(h)}(t,u)\le {\revK C} 
	\int_0^{\frac{h-\chi_h(u)}{t-u}}   {\revK 2^\alpha \xi^{\alpha-1} }d\xi 
	 \le C \left(\frac{h-\chi_h(u)}{t-u}\right)^\alpha \le  C \left(\frac{h}{t-u}\right)^\alpha.
\end{equation*}
{\revarn Combining} this {\revarn upper bound} with the boundedness of $e_1^{(h)}(t,u)$, we deduce
\begin{equation}\label{Eq : upper e1}
		e_1^{(h)}(t,u)\le C \left[\left(\frac{h}{t-u}\right)^\alpha \wedge 1 \right] .
\end{equation}

We now study $e_2^{(h)}(t,u)$. Remark that for $\xi  \in [\frac{kh-\chi_h(u)}{t-u},\frac{(k+1)h-\chi_h(u)}{t-u})$ with $k\ge 1$ some integer, we have
$\xi(t-u)+u \in  [kh-\chi_h(u)+u   , kh+h-\chi_h(u)+u)=[kh + \varphi_h(u)   , kh+h + \varphi_h(u))$. We deduce $\varphi_h(\xi(t-u)+u)=kh+\varphi_h(u)$. Thus,
$K^{(h)}(\xi(t-u)+u,u)=K(\varphi_h(\xi(t-u)+u)-u)=K(kh+\varphi_h(u)-u)=K(kh-\chi_h(u))$, where we
used $u=\varphi_h(u)+\chi_h(u)$. {\revK From \eqref{Eq : comp K K0}, this yields $\abs*{K^{(h)}(\xi(t-u)+u,u)}\le C K_0(kh-\chi_h(u))$.}
As a consequence, if $\xi  \in [\frac{h-\chi_h(u)}{t-u},\frac{3h-\chi_h(u)}{t-u})$, 
we deduce {\revK $\abs*{ K^{(h)}(\xi(t-u)+u,u)} \le C K_0(h-\chi_h(u)) $} using that $s\mapsto K_{{\revK 0}}(s)$ is a decreasing function on $(0,\infty)$.
Also, for $\xi  \in [\frac{h-\chi_h(u)}{t-u},\frac{3h-\chi_h(u)}{t-u})$, we have 
{\revK $\abs*{ K(\xi(t-u)+u,u) } = \abs*{ K(\xi(t-u)+u-u) } \le 
	C K_0(\xi(t-u))$, where we used 
		\eqref{Eq : comp K K0}. Since $s\mapsto K_{{\revK 0}}(s)$ is decreasing on $(0,\infty)$ and $\xi(t-u) \ge h-\chi_h(u)$, we deduce
$\abs*{K(\xi(t-u)+u,u)} \le C K_{{\revK 0}}(h-\chi_h(u))$.}
From this {\revarn and the definition of $e_2^{(h)}(t,u)$},
we can write
\begin{equation}\label{Eq : upper e2 inter}
	e_2^{(h)}(t,u)\le  {\revK C}(t-u)^{1-\alpha} K_{\revK 0{}}(h-\chi_h(u)) \int_{\frac{h-\chi_h(u)}{t-u} \wedge 1}^{\frac{3h-\chi_h(u)}{t-u} \wedge 1}  (1-\xi)^{-\alpha}d\xi.
\end{equation}
If $t-u \ge 4h$, we have $\frac{3h-\chi_h(u)}{t-u} \le 3/4$ and we can upper bound 
\begin{equation*}
	\int_{\frac{h-\chi_h(u)}{t-u} \wedge 1}^{\frac{3h-\chi_h(u)}{t-u} \wedge 1} (1-\xi)^{-\alpha}  d\xi\le 
		\int_{\frac{h-\chi_h(u)}{t-u}}^{\frac{3h-\chi_h(u)}{t-u} } (1/4)^{-\alpha} d\xi \le C \frac{h}{t-u}.
	\end{equation*}
Moreover, we have $	\int_{\frac{h-\chi_h(u)}{t-u} \wedge 1}^{\frac{3h-\chi_h(u)}{t-u} \wedge 1} (1-\xi)^{-\alpha}  d\xi\le  C$ for all $(t,u)$. Inserting these controls in \eqref{Eq : upper e2 inter} gives
\begin{multline}\label{Eq : upper e2}
	e_2^{(h)}(t,u)\le  C (t-u)^{1-\alpha} K_{{\revK 0}}(h-\chi_h(u)) 
	\left[\frac{h}{t-u} \wedge 1 \right]
	\\
	\le  C K_{{\revK 0}}(h-\chi_h(u))h 
	\left[\frac{1}{(t-u)^\alpha} \wedge \frac{1}{h^{\alpha}}  \right],
	\end{multline}
where to get the second line we discussed according to the minimum between $1$ and $h/(t-u)$.

It remains to study $e_3^{(h)}(t,u)$.   We use $\varphi_h(\xi(t-u)+u) \ge\xi(t-u)+u-h$ to deduce
$   \varphi_h(\xi(t-u)+u)-u \ge \xi(t-u)-h=\frac{\xi}{2}(t-u) + \frac{\xi}{2}(t-u)-h$. Now, in the situation
$\xi \ge \frac{3h-\chi_h(u)}{t-u}$, we have $\frac{\xi}{2}(t-u) \ge \frac{1}{2}(3h-\chi_h(u))\ge 2h/2=h$. It entails,  $\varphi_h(\xi(t-u)+u)-u \ge\frac{\xi}{2}(t-u) + \frac{\xi}{2}(t-u)-h \ge \frac{\xi(t-u)}{2}$.

From the definition of $K^{(h)}(\cdot,\cdot)$ and $K(\cdot,\cdot)$, we have
\begin{multline} \label{Eq : e3 to bound}
	e_3^{(h)}(t,u)=  (t-u)^{1-\alpha} \int_{\frac{3h-\chi_h(u)}{t-u} \wedge 1}^1  (1-\xi)^{-\alpha} \abs[\big]{K(\varphi_h(\xi(t-u)+u)-u)\\-K(\xi(t-u))}d\xi.
\end{multline}
In the case $t-u \ge 4h$, we use {\revarn the mean value theorem to obtain} 
$\abs{K(\varphi_h(\xi(t-u)+u)-u)-K(\xi(t-u))} \le \abs{\varphi_h(\xi(t-u)+u)-u - \xi(t-u)} \times
\sup_{s \in [\varphi_h(\xi(t-u)+u)-u {\revarn )},\xi(t-u)] }\abs{K'(s)}$. As we have seen that $\varphi_h(\xi(t-u)+u)-u \ge \frac{\xi}{2}(t-u)$, and using that $\abs{\varphi_{{\revarn h}}(s)-s}\le h$ for any $s$, it implies
\begin{multline*}
	\abs*{K(\varphi_h(\xi(t-u)+u)-u)-K(\xi(t-u))} \le h \sup_{s \in [\frac{\xi}{2}(t-u), 1] }\abs{K'(s)}
\\	{\revK \le C h \sup_{s \in [\frac{\xi}{2}(t-u), 1] }\abs{K'_0(s)}}
	\le C h \left({\revarn \xi(t-u)}\right)^{\alpha-2},
\end{multline*}
{\revK where we used \eqref{Eq : comp K K0}.}
Plugging in \eqref{Eq : e3 to bound}, we deduce
\begin{equation*}
		e_3^{(h)}(t,u) \le C h(t-u)^{-1} \int_{\frac{3h-\chi_h(u)}{t-u} \wedge 1}^1  (1-\xi)^{-\alpha} \xi^{\alpha-2} d\xi.
\end{equation*} 
In the case $t-u \ge 4h$, the integral in the equation above is bounded by $C \left(\frac{3h-\chi_h(u)}{t-u}\right)^{\alpha-1} \le C
\left(\frac{h}{t-u}\right)^{\alpha-1} $, we deduce that
\begin{equation*}
	e_3^{(h)}(t,u) \le C \left(\frac{h}{t-u}\right)^{\alpha}.
\end{equation*} 
In the case $t-u < 4h$, we use \eqref{Eq : e3 to bound} and recall that in the integrand  $\varphi_h(\xi(t-u)+u)-u  \ge \frac{\xi(t-u)}{2}$. 
{\revK As $\abs*{K}\le C\abs{K_0}$ and the function $s \mapsto K_0(s)$ is decreasing on $(0,\infty)$, it entails}
\begin{align*} 
	e_3^{(h)}(t,u)& \le  {\revK C} (t-u)^{1-\alpha} \int_{\frac{3h-\chi_h(u)}{t-u} \wedge 1}^1  (1-\xi)^{-\alpha}
	 \abs*{K_{{\revK 0}}\left(\frac{\xi}{2}(t-u)\right)}d\xi
	\\
	&\le {\revK C} \int_{\frac{3h-\chi_h(u)}{t-u} \wedge 1}^1  (1-\xi)^{-\alpha} \xi^{\alpha-1}d\xi \le C.
\end{align*}
Gathering the cases $t-u < 4h$ and $t-u \ge 4h$, we deduce
\begin{equation} \label{Eq : upper e3}
	e_3^{(h)}(t,u) \le C \left[ \left(\frac{h}{t-u}\right)^{\alpha} \wedge 1 \right]
\end{equation}
The proposition is a consequence of \eqref{Eq : upper e1}, \eqref{Eq : upper e2} and \eqref{Eq : upper e3}.
\end{proof}
\begin{lemma} \label{lem : upper bound L1 L2 g minus 1}
	There exist {\revarn $C_1, ~C_2>0$} such that for all $0\le t \le T$, $0\le  h \le 1$,
	\begin{align} \label{Eq : int g 1 h alpha}
		&\int_0^t \abs*{g^{(h)}(t,u)-1}du \le {\revarn C_1} h^\alpha,
		\\ \label{Eq : int g 2 h half}
		&\int_0^t \abs*{g^{(h)}(t,u)-1}^2 du \le {\revarn C_2} h.
	\end{align}
\end{lemma}	
\begin{proof}
	We start with the proof of \eqref{Eq : int g 1 h alpha}. We use Proposition \ref{Prop : maj g - 1}.
	It leads us to consider for $0 \le t \le T$, $0\le h\le 1$,
	\begin{multline}\label{Eq : int g 1 h alpha simple}
		\int_0^t  \left[\left(\frac{h}{t-u}\right)^\alpha \wedge 1 \right]  
		du \le \int_0^{(t-h)\vee 0} \left(\frac{h}{t-u}\right)^\alpha du + \int_{(t-h)\vee 0}^t 1 du
	\\	{\revarn \le C h^{\alpha}t^{1-\alpha} + C h\le C h^\alpha. } 
	\end{multline}
	
			For $t \ge 2h$, we write
\begin{multline*}
		h\int_0^t  K_{{\revK 0}}(h-\chi_h(u)) \left[\left(\frac{1}{t-u}\right)^\alpha \wedge \left(\frac{1}{h}\right)^\alpha\right] du
		\le \int_0^{\varphi_h(t)-h} \frac{hK_{{\revK 0}}(h-\chi_h(u))}{(t-u)^\alpha} du\\
		{\revarn +} h^{1-\alpha}\int_{\varphi_h(t)-h}^{\varphi_h(t)+h} K_{{\revK 0}}(h-\chi_h(u))du.
\end{multline*}
Let us denote by $i_h \ge 2$ the integer such that $\varphi_h(t)=h i_h$. We split the first integral in the right hand-side of the above inequality as
\begin{equation*}
	\sum_{j=0}^{i_h-2} \int_{jh}^{(j+1)h} \frac{hK_{{\revK 0}}(h-\chi_h(u))}{(t-u)^\alpha} du \le 
		\sum_{j=0}^{i_h-2} \frac{h}{(i_h h-(j+1)h)^\alpha} \int_{jh}^{(j+1)h} K_{{\revK 0}}(h-\chi_h(u)) du . 
\end{equation*}
For any $j \ge 0$, $ \int_{jh}^{(j+1)h} K_{{\revK 0}}(h-\chi_h(u)) du=
\int_{jh}^{(j+1)h} K_{{\revK 0}}(h-(u-\varphi_h(u)))du=
\int_{jh}^{(j+1)h} K_{{\revK 0}}(h-(u-jh))=\int_0^h K_{{\revK 0}}(u) du \le C h^\alpha$. We deduce
\begin{multline}\label{Eq : main L1 g}
	\sum_{j=0}^{i_h-2} \int_{jh}^{(j+1)h} \frac{hK_{{\revK 0}}(h-\chi_h(u))}{(t-u)^\alpha} du \le 
	\sum_{j=0}^{i_h-2}  \frac{h^{1+\alpha}}{(i_h h-(j+1)h)^\alpha} \\   
	=h\sum_{j=0}^{i_h-2}  \frac{1}{(i_h -(j+1))^\alpha} =
		h\sum_{j=1}^{i_h-1}  \frac{1}{j^\alpha} \le C h(i_h)^{1-\alpha} 
		\\= C (h i_h)^{1-\alpha} h^\alpha		
		\le C t^{1-\alpha} h^{\alpha} \le C h^\alpha.
\end{multline}	
We also have $h^{1-\alpha} \int_{\varphi_h(t)-h}^{\varphi_h(t)+h} K_{{\revK 0}}(h-\chi_h(u)) du=2h^{1-\alpha} \int_0^hK_{{\revK 0}}(u)du \le C h$. 
Together with \eqref{Eq : main L1 g}, it gives
 $	h\int_0^t  K_{{\revK 0}}(h-\chi_h(u)) \left[\left(\frac{1}{t-u}\right)^\alpha \wedge \left(\frac{1}{h}\right)^\alpha\right] 
du \le C(h+h^\alpha) \le C h^\alpha$ for $t \ge 2h$. Now, \eqref{Eq : int g 1 h alpha} follows in this case, using \eqref{Eq : int g 1 h alpha simple} and recalling Proposition \ref{Prop : maj g - 1}.

 When $t \le 2h$, we write 
$	h\int_0^t  K_{{\revK 0}}(h-\chi_h(u)) \left[\left(\frac{1}{t-u}\right)^\alpha \wedge \left(\frac{1}{h}\right)^\alpha\right] du \le 
h^{1-\alpha}\int_{0}^{\varphi_h(t)+h} K_{{\revK 0}}(h-\chi_h(u))du \le C h \le Ch^{\alpha}$ and  \eqref{Eq : int g 1 h alpha} follows,  as well.

We now prove \eqref{Eq : int g 2 h half}. Using Proposition \ref{Prop : maj g - 1}, it is a consequence of \eqref{Eq : int g2 h half simple} and \eqref{Eq : int g2 h half comp} below.
We have the simple upper bound, for $0 \le t \le T$, $0\le h\le 1$,
\begin{multline}\label{Eq : int g2 h half simple}
	\int_0^t  \left[\left(\frac{h}{t-u}\right)^\alpha \wedge 1 \right]^2  
	du \le \int_0^{(t-h)\vee 0} \left(\frac{h}{t-u}\right)^{2\alpha} du + \int_{(t-h)\vee 0}^t 1 du
	\\	\le C h^{2\alpha}h^{1-2\alpha} + h \le C  h.
\end{multline}

		For $t \ge 2h$, we write
\begin{align*}
&	h^2\int_0^t  K_{{\revK 0}}(h-\chi_h(u))^2 \left[\left(\frac{1}{t-u}\right)^{\alpha} \wedge \left(\frac{1}{h}\right)^{\alpha}\right]^2 du
\\	&\le C\int_0^{\varphi_h(t)-h} \frac{h^2K_{{\revK 0}}(h-\chi_h(u))^2}{(t-u)^{2\alpha}} du+ h^{2-2\alpha}\int_{\varphi_h(t)-h}^{\varphi_h(t)+h} K_{{\revK 0}}(h-\chi_h(u))^2du.
\\	&\le  C	\sum_{j=0}^{i_h-2} \int_{jh}^{(j+1)h} \frac{h^2K_{{\revK 0}}(h-\chi_h(u))^2}{(i_h h-(j+1)h)^{2\alpha}} du 
+\sum_{i=i_h-1}^{i_h+1} h^{2-2\alpha} \int_{jh}^{(j+1)h} K_{{\revK 0}}(h-\chi_h(u))^2du
\\
& \le  C	h^{2-2\alpha } \int_{0}^{h} K_{{\revK 0}}(u)^2 du \sum_{j=0}^{i_h-2} \frac{1}{(i_h -(j+1))^{2\alpha}} 
+ h^{2-2\alpha} 2 \int_{0}^{h} K_{{\revK 0}}(u)^2 du.
\end{align*}
Using that $\int_0^h K_{{\revK 0}}(u)^2 du \le C h^{2\alpha-1}$ and $\sum_{j=1}^\infty \frac{1}{j^{2\alpha}}<\infty$, we deduce
\begin{equation}\label{Eq : int g2 h half comp}
	h^2\int_0^t  K_{{\revK 0}}(h-\chi_h(u))^2 \left[\left(\frac{1}{t-u}\right)^{\alpha} \wedge \left(\frac{1}{h}\right)^{\alpha}\right]^2 du \le Ch.
\end{equation}

If $t \le 2h$, the equation \eqref{Eq : int g2 h half comp} is also valid, using
\begin{multline*}
		h^2\int_0^t  K_{{\revK 0}}(h-\chi_h(u))^2 \left[\left(\frac{1}{t-u}\right)^{\alpha} \wedge \left(\frac{1}{h}\right)^{\alpha}\right]^2 du \\\le 
h^{2-2\alpha} \int_0^{2h} K_{{\revK 0}}(h-\chi_h(u))^2 du \le Ch.
\end{multline*} 
The lemma is proved.
\end{proof}
{\revarn 
	\begin{remark}
	The constant $C_1$ in Lemma \ref{lem : upper bound L1 L2 g minus 1} depends on the kernel $K$ and on the time horizon $T$, with $C_1=O(T^{1-\alpha})$ for $T\to\infty$. On the other hand, the constant $C_2$ depends only on the kernel. 
\end{remark}}
\subsection{Proof of Theorem \ref{Th :  Lq norm Y^h Y}}
First, remark that is it sufficient to prove the result for $p\ge 2$.
We write, using \eqref{Eq : Y^h as kernel} and the notation
$\xi^{(h)}(t,s)=g^{(h)}(t,s)-1$,
\begin{align*}
Z^{(h)}_t-Z_t&=\int_0^t g^{(h)}(t,s) dZ_s -Z_t=\int_0^t \xi^{(h)}(t,s) dZ_s \\
&=
\int_0^t \xi^{(h)}(t,s)  a(X_s) dB_s + \int_0^t \xi^{(h)}(t,s)  b(X_s) ds. 
\end{align*}
Thus, we have using {\revarn Burkholder}-Davis-Gundy inequality in the second line,
\begin{align*}
		\E\left[\abs*{Z^{(h)}_t-Z_t}^p\right]&
\begin{multlined}[t]
	\le 2^{p-1}  	\E\left[\abs*{\int_0^t \xi^{(h)}(t,s)  a(X_s) dB_s}^p\right]  \\ {\revarn +} 
	2^{p-1}  	\E\left[\abs*{\int_0^t \xi^{(h)}(t,s)  b(X_s) ds}^p\right] 
\end{multlined}
\\&
\begin{multlined}
	\le 2^{p-1} c(p) 	\E\left[\abs*{\int_0^t \xi^{(h)}(t,s)^2  \abs*{a(X_s)}^2 ds}^{p/2}\right]  \\ {\revarn +} 
	2^{p-1}  	\E\left[\abs*{\int_0^t \xi^{(h)}(t,s)  b(X_s) ds}^p\right] .
\end{multlined}
\end{align*}
Now, we apply Jensen's inequality, as $p\ge2$, to deduce,
\begin{align*}
	\E\left[\abs*{Z^{(h)}_t-Z_t}^p\right]&
	\begin{multlined}[t]
		\le c(p)  	\E\left[ \left( \int_0^t \xi^{(h)}(t,s)^2 ds \right)^{p/2-1} 
		\int_0^t \xi^{(h)}(t,s)^2   \abs{a(X_s)}^p ds \right]  \\
		{\revarn +}
		c(p)	\E\left[\left(\int_0^t \abs{\xi^{(h)}(t,s)} ds \right)^{p-1} \int_0^t \abs{\xi^{(h)}(t,s)} \abs{b(X_s)}^p ds \right] 
	\end{multlined}
	\\&
	\begin{multlined}
		\le c(p) h^{p/2-1} 	\E\left[   
		\int_0^t \xi^{(h)}(t,s)^2   \abs{a(X_s)}^p ds \right] \\
		{\revarn +}
		c(p)	h^{\alpha(p-1)} \E\left[ \int_0^t \abs{\xi^{(h)}(t,s)} \abs{b(X_s)}^p ds \right] 
	\end{multlined}
\end{align*}
by using \eqref{Eq : int g 1 h alpha} and \eqref{Eq : int g 2 h half} to get the second inequality.
Using again  \eqref{Eq : int g 1 h alpha}--\eqref{Eq : int g 2 h half}, we can write
 \begin{equation*}
 	{\revarn	\E\left[\abs*{Z^{(h)}_t-Z_t}^p\right]
 		\le c(p) {\revarn C^{p/2}_1} h^{p/2}  \sup_{s\in[0,T]} \E \left[|a(X_s)|^p \right]
 		 +
 		 c(p) {\revarn C^{p}_2} h^{\alpha p}  \sup_{s\in[0,T]} \E\left[ |b(X_s)|^p \right]},		
 \end{equation*}
 {\revarn where the constants $C_1, C_2$ above are the same as in Lemma 
 	\ref{lem : upper bound L1 L2 g minus 1} and depends on the kernel $K$ but not on the coefficients $a$ and $b$.}
 This is exactly \eqref{Eq : Lp Y - Z in statement}. 
 
 From the fact that $a$ and $b$ are at most with linear growth, using Assumption \ref{Ass : global Lip}, and recalling the control \eqref{Eq : moment Volterra} which is valid for $X$ we deduce
\begin{equation*} 
	\E\left[\abs*{Z^{(h)}_t-Z_t}^p\right]
	\le c(p)   h^{p/2} + c(p)  h^{p\alpha} \le c(p) h^{p/2},
\end{equation*}
{\revarn as $\alpha>1/2$, and for a constant $c(p)$ depending on the coefficients of $a$ and $b$, $K$.}  This proves \eqref{Eq : Lp Y - Z in statement particular} and the theorem follows. 
 \qed

\section{Statistical application}\label{S : Stat appl}
In this section we assume that $X^{\ve}$ is a solution to \eqref{Eq : Volterra SDE}. The process $X^\ve$ is observed on $[0,T]$ with a sampling step equal to $h>0$, under the assumption that $h\to0$. 

Let $X^{\ve,(h)}_t=X^\ve_{\varphi_h(t)}$ and denote  $Z^{\ve,(h)}_t=L\star(X^{{\revarn\ve}}_\cdot-x_0)(t)=\int_0^t L(t-s)(X^\ve_{\varphi_h(s)}-x_0)ds$. From the previous section, the process $Z^{\ve,(h)}$ is an approximation of the semimartingale
\begin{equation*}
	Z^\ve_t:=\ve \int_0^t a(X^\ve_s)dB_s+\int_0^t b(X^\ve_s,\theta^\star) ds.
\end{equation*}
We base our estimator of $\theta$ on the approximate observations 
of this semimartingale. The error of approximation of $Z^\ve$ by $Z^{\ve,(h)}$ is of magnitude $O(h^{1/2})$ by Theorem \ref{Th :  Lq norm Y^h Y}, which is the same magnitude as {\revarn an}  $h$-increment of the semimartingale $Z^\ve$.
Hence, we introduce a subsampling scheme. Let $k\ge 1$ and set $\Delta=kh$ with $k\to \infty$, $\Delta \to 0$. We denote $N=\lfloor T/\Delta \rfloor$.

Our estimation procedure is based on the random variables $Z^{\ve,(h)}_{j \Delta}$ and $X^{\ve}_{j\Delta}=X^{\ve}_{jkh}$ for $j=0,\dots,N$ which are available to the statistician.

We introduce the notation 
\begin{equation}\label{Eq : def Xi}
	\Xi_j^{\ve,(h)}(\theta)=
Z^{\ve,(h)}_{j\Delta + \Delta}-Z^{\ve,(h)}_{j\Delta}-
\Delta b(X^{\ve}_{j\Delta},\theta).
\end{equation}
We consider $H : \mathbb{R}^d \to \mathcal{S}^d$, where $\mathcal{S}^d$ is the linear space of symmetric $d\times d$ matrices and define
\begin{equation} \label{Eq : def contrast}
{\revarn	\mathbb{C}^{\ve,(h)}(\theta)=\sum_{j=0}^{N-1}
	\left(
	\Xi_j^{\ve,(h)}(\theta)^* H(X^{\ve}_{j\Delta})  \Xi_j^{\ve,(h)}(\theta)
	\right)}.
\end{equation}
The contrast function $\theta \mapsto 	\mathbb{C}^{\ve,(h)}(\theta)$ is real valued. We introduce $H$ to deal with a generic situation, however we mainly intend to consider $H(x)=Id$ {\modar or} $H(x)=(aa^*)^{-1}(x)$ when the latter is well defined.  
We introduce the following assumptions on $H$ :
\begin{assumption} \label{Ass : K mino}
The function $x \mapsto H(x)$ is $\mathcal{C}^1$
 and globally Lipschitz. Moreover,  we assume that $H$ is a {\revarn uniformly definite positive} symmetric matrix : $\exists c > 0, \forall x\in \mathbb{R}^d, H(x) \ge c Id$.
\end{assumption}
{\revarn Remark that from the globally Lipschitz property of $H$, there exists $C>0$ such that $\forall x\in \mathbb{R}^d, \abs{H(x)}\le C(1+\abs{x})$.}

We let $\widehat{\theta}_\ve= \argmin_{\theta \in \Theta} \mathbb{C}^{\ve,(h)}(\theta)$ be the estimator based on minimisation of the contrast.

We introduce the identifiability condition 
 \begin{assumption} \label{Ass : ident}
 	$\int_0^T \abs{b(X^0_s,\theta)-b(X^0_s,\theta^\star)}^2 ds > 0$ for all $\theta \neq \theta^\star$. 
 \end{assumption}
 {\revarn We precise that in the above assumption $X^0$ is the solution of \eqref{Eq : Volterra SDE} with $\ve=0$, and thus correspond to the value of the parameter $\theta^\star$.}

 We need some regularity on the coefficient of the Volterra SDE with respect to $(x,\theta)$.  
{\revarn For $E$ a finite dimensional space, and $k_1 \ge0, k_2\ge0$ integers, we denote by $\mathcal{C}^{k_1,k_2}_{\mathcal{P}}(\mathbb{R}^d\times\overset{\circ}{\Theta},E)$ the set of functions $f : \mathbb{R}^d\times\overset{\circ}{\Theta} \to E$ such that all
partial derivatives in $x$ of order up to $k_1$, and in $\theta$ 
of order up to $k_2$ exist, are continuous on $\mathbb{R}^d \times \overset{\circ}{\Theta}$, and have at most polynomial growth. 
More precisely, for any multi-indices $\bm{i}=(i_1,\dots,i_d) \in \mathbb{N}^d$ and  
$\bm{j}=(j_1,\dots,j_{d_\Theta}) \in \mathbb{N}^{d_\theta}$ with 
$\abs{\bm{i}}:=\sum_{l=1}^d i_l \le k_1$, and $\abs{\bm{j}}:=\sum_{l=1}^{d_\Theta} j_l \le k_2$, we assume
\begin{equation*}
	\sup_{\theta \in \overset{\circ}{\Theta}} 
	\norm*{
		\frac{\partial^{\abs*{\bm{i}}} f(x,\theta)}{\partial x^{i_1}_1 \dots \partial x^{i_d}_d } }_E +
	\sup_{\theta \in \overset{\circ}{\Theta}}	\norm*{
		\frac{\partial^{\abs*{\bm{j}}} f(x,\theta)}{\partial \theta^{j_1}_1 \dots\partial \theta^{j_{d_{\Theta}}}_{d_\Theta} }}_E \le c\left(1+\abs*{x}^c\right),
\end{equation*}
for some constant $c>0$. 
When mixed derivatives are needed, we denote for $k\ge 1$, by $\mathcal{C}^{k}_{\mathcal{P}}(\mathbb{R}^d\times\overset{\circ}{\Theta},E)$ the set of functions $f : \mathbb{R}^d\times\overset{\circ}{\Theta} \to E$ such that for all 
multi-indices $\bm{r}=(r_1,\dots,r_{d+d_\Theta})\in \mathbb{N}^{d+d_\Theta}$ with $\abs{\bm{r}}\le k$ 
the partial derivatives up to order $k$  exist and are continuous on $\mathbb{R}^d \times \overset{\circ}{\Theta}$. Moreover, we assume that there exists $c>0$ such that
$	\sup_{\theta \in \overset{\circ}{\Theta}} 
	\norm*{
		\frac{\partial^{\abs*{\bm{r}}} f(x,\theta)}{\partial x^{r_1}_1 \dots \partial x^{r_d}_d 
		\partial\theta^{r_{d+1}}_1 \dots\partial \theta^{r_{d+d_{\Theta}}}_{d_\Theta} }}
		_E \le c\left(1+\abs*{x}^c\right).$
}

 \subsection{Statistical results}

 We introduce the following assumption on the choice of the parameter $k$.
 \begin{assumption} 
 	\label{Ass: step consistency}
 	We assume $k=k_{{\revarn \ve}} \to \infty$ is a sequence of integers such that
 	$k=o(h^{-1})$,  $h^{\alpha-1}=o(k)$ and $\ve h^{-1/2} = o(k)$.
 \end{assumption}
  {\revarn Note that Assumption  \ref{Ass: step consistency} concerns the choice of the tuning parameter $k$ and not the statistical model itself.  The assumption 
 	is satisfied for instance if we choose $k\sim h^{-1/2}$, in which case $\Delta\sim h^{1/2}$.}

\begin{theorem} \label{Thm : consistency}
	Assume \ref{Ass : global Lip}, {\revK \ref{Ass : kernel resolvent},} \ref{Ass : K mino}, \ref{Ass : ident}, \ref{Ass: step consistency}, and {\revarn that} $b\in\mathcal{C}^{0,1}_{\mathcal{P}}(\mathbb{R}^d \times \overset{\circ}{\Theta})$.
Then, we have $\widehat{\theta}_\ve \xrightarrow[\mathbb{P}]{\ve\to {\revarn0}} \theta^\star$.		
\end{theorem}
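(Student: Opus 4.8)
The plan is the classical route for consistency of a minimum-contrast estimator: identify a deterministic limit contrast $F$ with a well-separated minimum at $\theta^\star$, and show that a suitably recentred and renormalised version of $\mathbb{C}^{\ve,(h)}$ converges to $F$ uniformly on $\Theta$ in probability.

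First I would recentre and expand. Since $\widehat{\theta}_\ve$ also minimises $\theta\mapsto\mathbb{C}^{\ve,(h)}(\theta)-\mathbb{C}^{\ve,(h)}(\theta^\star)$, I work with $\mathbb{G}^{\ve,(h)}(\theta):=\Delta^{-1}\big(\mathbb{C}^{\ve,(h)}(\theta)-\mathbb{C}^{\ve,(h)}(\theta^\star)\big)$. Writing $D_j(\theta):=b(X^\ve_{j\Delta},\theta^\star)-b(X^\ve_{j\Delta},\theta)$ and $H_j:=H(X^\ve_{j\Delta})$, \eqref{Eq : def Xi} gives $\Xi_j^{\ve,(h)}(\theta)=\Delta D_j(\theta)+\Xi_j^{\ve,(h)}(\theta^\star)$ with $\Xi_j^{\ve,(h)}(\theta^\star)$ independent of $\theta$, so expanding \eqref{Eq : def contrast} yields
\begin{equation*}
\mathbb{G}^{\ve,(h)}(\theta)=\Delta\sum_{j=0}^{N-1}D_j(\theta)^*H_j\,D_j(\theta)+2\sum_{j=0}^{N-1}D_j(\theta)^*H_j\,\Xi_j^{\ve,(h)}(\theta^\star).
\end{equation*}
Using \eqref{Eq : def Zve} I further split $\Xi_j^{\ve,(h)}(\theta^\star)=\ve M_j+\rho_j+r_j$, where $M_j:=\int_{j\Delta}^{(j+1)\Delta}a(X^\ve_s)\,dB_s$, $\rho_j:=\int_{j\Delta}^{(j+1)\Delta}\big(b(X^\ve_s,\theta^\star)-b(X^\ve_{j\Delta},\theta^\star)\big)ds$, and $r_j:=\big(Z^{\ve,(h)}_{(j+1)\Delta}-Z^\ve_{(j+1)\Delta}\big)-\big(Z^{\ve,(h)}_{j\Delta}-Z^\ve_{j\Delta}\big)$ is the kernel-inversion error.

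The limit contrast is $F(\theta):=\int_0^T\big(b(X^0_s,\theta^\star)-b(X^0_s,\theta)\big)^*H(X^0_s)\big(b(X^0_s,\theta^\star)-b(X^0_s,\theta)\big)ds$. By \ref{Ass : K mino} ($H\ge c\,Id$) and \ref{Ass : ident} one has $F(\theta)\ge c\int_0^T\abs{b(X^0_s,\theta)-b(X^0_s,\theta^\star)}^2ds>0$ for $\theta\ne\theta^\star$ and $F(\theta^\star)=0$, and $F$ is continuous on the compact $\Theta$ by joint continuity of $b$ and $H$ and boundedness of the continuous deterministic path $X^0$ on $[0,T]$; hence $\theta^\star$ is the unique well-separated minimiser of $F$. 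The core of the proof is then the uniform convergence $\sup_{\theta\in\Theta}\abs{\mathbb{G}^{\ve,(h)}(\theta)-F(\theta)}\to0$ in probability. For the quadratic term I would first replace $X^\ve$ by $X^0$: from \eqref{Eq : Volterra SDE}, the noiseless version of \eqref{Eq : Volterra SDE_general} for $X^0$, Burkholder--Davis--Gundy, \eqref{Eq : moment Volterra} and \ref{Ass : global Lip}, one gets $\norm{X^\ve_t-X^0_t}_{\mathbf{L}^p}\le c\,\ve+C\int_0^t K(t-s)\norm{X^\ve_s-X^0_s}_{\mathbf{L}^p}ds$, so a fractional Grönwall inequality gives $\sup_{t\le T}\norm{X^\ve_t-X^0_t}_{\mathbf{L}^p}\le c'\ve\to0$; combined with \ref{Ass : global Lip}, the polynomial growth in \ref{Ass : K mino} and \eqref{Eq : moment Volterra}, the difference between $\Delta\sum_jD_j(\theta)^*H_j D_j(\theta)$ and its $X^0$-counterpart has expectation $O(\ve)$ uniformly in $\theta$, while the deterministic Riemann sum $\Delta\sum_{j=0}^{N-1}\big(b(X^0_{j\Delta},\theta^\star)-b(X^0_{j\Delta},\theta)\big)^*H(X^0_{j\Delta})\big(b(X^0_{j\Delta},\theta^\star)-b(X^0_{j\Delta},\theta)\big)$ converges to $F(\theta)$ uniformly in $\theta$, as $\Delta=kh\to0$, by uniform continuity of the integrand on $[0,T]\times\Theta$. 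For the cross terms: $2\ve\sum_jD_j(\theta)^*H_j M_j$ is a sum of $(\mathcal{F}_{j\Delta})$-martingale increments whose $\mathbf{L}^2$-norm is $O\big(\ve(N\Delta)^{1/2}\big)=O(\ve)=o_{\P}(1)$; the Hölder moment bound $\norm{X^\ve_s-X^\ve_{j\Delta}}_{\mathbf{L}^p}\le c\abs{s-j\Delta}^{\alpha-1/2}$ recalled in Section \ref{S : Model} gives $\norm{\rho_j}_{\mathbf{L}^p}=O(\Delta^{\alpha+1/2})$, so $\E\abs*{2\sum_jD_j(\theta)^*H_j\rho_j}=O(N\Delta^{\alpha+1/2})=O(\Delta^{\alpha-1/2})\to0$ since $\alpha>1/2$; and applying Theorem \ref{Th :  Lq norm Y^h Y} to $X^\ve$, whose diffusion and drift coefficients are $\ve a$ and $b(\cdot,\theta^\star)$, yields $\norm{Z^{\ve,(h)}_t-Z^\ve_t}_{\mathbf{L}^p}\le c(\ve h^{1/2}+h^\alpha)$, hence $\norm{r_j}_{\mathbf{L}^p}\le c(\ve h^{1/2}+h^\alpha)$ and $\E\abs*{2\sum_jD_j(\theta)^*H_j r_j}=O\big(N(\ve h^{1/2}+h^\alpha)\big)=O\big(\ve h^{-1/2}k^{-1}+h^{\alpha-1}k^{-1}\big)\to0$ precisely by \ref{Ass: step consistency}. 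Uniformity in $\theta$ of all these estimates follows from $b\in\mathcal{C}^{0,1}_{\mathcal{P}}$: each term is $\mathcal{C}^1$ in $\theta$ with an $\mathbf{L}^p$-bounded $\theta$-derivative (replace $D_j$ by $\partial_\theta b(X^\ve_{j\Delta},\cdot)$, of polynomial growth), so on the compact convex $\Theta$ the supremum is dominated by the value at one point plus the norm of the derivative.

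Finally I would conclude by the standard M-estimation argument: $\mathbb{G}^{\ve,(h)}\to F$ uniformly in probability, $F$ continuous on the compact $\Theta$ with unique minimiser $\theta^\star$; for any $\delta>0$, $\inf_{\abs{\theta-\theta^\star}\ge\delta}F(\theta)>0=F(\theta^\star)$, so on $\{\abs{\widehat{\theta}_\ve-\theta^\star}\ge\delta\}$ one has $2\sup_\theta\abs{\mathbb{G}^{\ve,(h)}(\theta)-F(\theta)}\ge\inf_{\abs{\theta-\theta^\star}\ge\delta}F$, whence $\widehat{\theta}_\ve\xrightarrow{\P}\theta^\star$. The main obstacle, and the only genuinely new ingredient, is the control of the reconstruction-error contribution $\sum_jD_j(\theta)^*H_j r_j$: it must absorb the $N=\lfloor T/\Delta\rfloor$ summands using the sharp $h^{1/2}$ (resp.\ $h^\alpha$) rate of Theorem \ref{Th :  Lq norm Y^h Y} together with the subsampling condition \ref{Ass: step consistency} — a cruder inversion bound of order $h^{\alpha-1/2}$ would fail — while the fractional Grönwall step and the uniform-in-$\theta$ bookkeeping are routine but must respect the polynomial growth permitted in \ref{Ass : K mino}.
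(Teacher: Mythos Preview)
Your proposal is correct and follows essentially the same route as the paper: the identical recentring $\Delta^{-1}(\mathbb{C}^{\ve,(h)}(\theta)-\mathbb{C}^{\ve,(h)}(\theta^\star))$, the same three-term decomposition $\Xi_j^{\ve,(h)}(\theta^\star)=\ve M_j+\rho_j+r_j$, the replacement $X^\ve\to X^0$ via $\sup_t\norm{X^\ve_t-X^0_t}_{\mathbf{L}^p}=O(\ve)$ (which the paper cites as Lemma~\ref{Lem : conv Xve X0}), the use of Theorem~\ref{Th :  Lq norm Y^h Y} for $r_j$ together with \ref{Ass: step consistency}, and finally uniformity from the $\mathcal{C}^{0,1}_{\mathcal{P}}$ regularity of $b$ (Lemma~\ref{lem : Lp bound der contrast} in the paper). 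The only cosmetic differences are that the paper handles the martingale piece via the two-moment Lemma~9 of Genon-Catalot--Jacod rather than a direct $\mathbf{L}^2$ bound, and uses the slightly sharper increment estimate of Lemma~\ref{lem : shorter lemma increments} for $\rho_j$ where your cruder $\Delta^{\alpha-1/2}$ bound already suffices.
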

{\revarn The conditions} to get a central limit theorem are more stringent. In particular, we need that the sampling step $h$ is small enough with regard to the noise size $\ve$. 
\begin{assumption} 
	\label{Ass: step normality}
	{\modar
		Assume that $h=o(\ve^\frac{1}{\alpha^2})$ and that $k=k_{{\revarn \ve}} \to \infty$ is a sequence of integers such that
	$k=o(\ve^{1/\alpha} h^{-1})$, $h^{\alpha-1} = o(k)$ and $\ve h^{-1/2}=o(k)$ .}	
\end{assumption}
{\revarn	Note that the condition $h=o(\ve^{\frac{1}{\alpha^2}})$ is implied by the first two conditions on the tuning parameter $k$ in Assumption \ref{Ass: step normality}. We nevertheless state it explicitly, since it corresponds to a fast sampling condition on the observation scheme of the model. 
	Conversely, the condition $h=o(\ve^{\frac{1}{\alpha^2}})$ guarantees the existence of a sequence $k$ satisfying the constraints given in Assumption \ref{Ass: step normality}. One may take $k=\ve^{1/\alpha}h^{-1} \gamma_{\ve,h}$ where $\gamma_{\ve,h}>0$ tends to zero sufficiently slowly. Also, note that Assumption \ref{Ass: step normality} implies Assumption  \ref{Ass: step consistency}.  
	}

For $(u,v)\in \{1,\dots,d_\Theta\}^2$, we define
\begin{equation*}
	I_{u,v}=\int_0^T \partial_{\theta_u} b(X^0_t,\theta^\star)^* H(X^0_t)\partial_{\theta_v} b(X^0_t,\theta^\star) dt.
\end{equation*}
We introduce the condition,
\begin{assumption} 
	\label{Ass: Fisher inver}
	The matrix $I=[I_{u,v}]_{1\le u , v \le d_\Theta}$ is invertible.
\end{assumption}
We define ${\revarn \mathcal{G}:=} \int_0^T  \partial_\theta b(X^0_s,\theta^\star)^*
H(X^0_s)a(X_s^0)d B_s$ which is a {\revarn ${d_\Theta}$-dimensional Gaussian random variable}. 
\begin{theorem} \label{Thm : normality}
	Assume \ref{Ass : global Lip},  {\revK \ref{Ass : kernel resolvent},} \ref{Ass : K mino}, \ref{Ass : ident}, \ref{Ass: step normality},
	\ref{Ass: Fisher inver},  and 
	 that $b\in\mathcal{C}^{2}_{\mathcal{P}}(\mathbb{R}^d \times \overset{\circ}{\Theta}) \cap \mathcal{C}^{0,3}_{\mathcal{P}}(\mathbb{R}^d \times \overset{\circ}{\Theta})$. 
	Then, we have 
	\begin{equation*} 
	\ve^{-1}\left(\widehat{\theta}_\ve - \theta^\star\right) \xrightarrow[\mathbb{P}]{\ve\to 0} 
	 I^{-1} {\revarn \mathcal{G}.}
	\end{equation*}
\end{theorem}
\begin{remark}
	{\revarn The asymptotic behavior of the estimator is the same as if
		the semimartingale $Z^{\ve}$ were exactly observed.}
	The rate of estimation is {\revarn $\ve$, independently of $\alpha$,}	 which is the rate classically obtained in the situation of drift estimation for semimartingale 
	under small noise asymptotic {\revarn (see \cite{kutoyantsIdentificationDynamicalSystems1994})}. 
	Some {\revarn conditions are} required on the sampling step $h$, which is {\revarn consistent with earlier results for}
	diffusion processes {\revarn (see \cite{sorensenSmalldiffusionAsymptoticsDiscretely2003b})}. Our condition means that the sampling step $h$ must be small {\revarn relative} to the $\ve$ which {\revarn determines} the rate of the estimator. Our condition is more restrictive when $\alpha$ is small and reads {\modar $h=o(\ve^4)$} for $\alpha \simeq 1/2$ and {\modar $h=o(\ve)$} for $\alpha \simeq 1$.  
	The condition on $h$ arises, due to the error in 
the reconstruction of the semimartingale $Z^\ve$ from the observed sampling of $X^\ve$, {\revarn as well as from discretization error of the trajectory $X^\ve$}. Let us stress that in Remark
	 \ref{rem : certainly rate optimal}, we conjecture that our upper bound on the reconstruction error of {\revarn $Z^\ve$} is {\revarn sharp.}
\end{remark}

\begin{remark}
	\label{R : Fisher} 
	{\revarn Let  $\bar{I}$ be the covariance matrix of $\mathcal{G}$ which is given by
		$\bar{I}=\int_0^T \partial_\theta b(X^0_s,\theta^\star)^*
		H(X^0_s)(aa^*)(X_s^0)H(X^0_s)\partial_\theta b(X^0_s,\theta^\star)ds$. The covariance matrix of $I^{-1}\mathcal{G}$ is therefore $I^{-1}\bar{I}I^{-1}$.}	
	If one takes $H(x)=(aa^*)^{-1}(x)$, in the situation where the latter is well defined, we have
	$I=\bar{I}=I^{F}$ where $I^{F}$ is the Fisher information associated with the continuous observation of $Z^\ve$,
	 \begin{equation*}
		I^{F}:= \left[	
\int_0^T \partial_{\theta_u} b(X^0_t,\theta^\star)^* (aa^*)(X^0_t)^{-1}\partial_{\theta_v} b(X^0_t,\theta^\star) dt\right]_{1\le u,v \le d_\theta}
\end{equation*}
In this case, we have 
 $\ve^{-1}\left(\widehat{\theta}_\ve - \theta^\star\right) \xrightarrow[\mathcal{L}]{\ve \to0}
 \mathcal{N}(0,(I^F)^{-1})$. 
 
 {\revarn Note that the continuous observations of the processes $Z^\ve$ and $X^\ve$ are statistically equivalent. Indeed, 
 	a path of one process can be transformed into a path of the other by convolution with the kernels $K$ or $L$, which are independent of the parameter $\theta$. 
 	The optimality of the inverse Fisher information matrix 
 	is established, in the context of the observing a semimartingale process, in Section 2.1 of \cite{kutoyantsIdentificationDynamicalSystems1994}.  	
 	We deduce that $(I^F)^{-1}$ is the minimal variance of estimation, based on the continuous observation of $Z^\ve$, and thus based on $X^\ve$ as well.
 	Since the estimator $\widehat{\theta}_\ve$ reaches this minimal variance from a discrete sampling of $X^\ve$, it is efficient. 	
 	
 	Remark also that the variance of the estimator depends implicitly on $\alpha$ through the path $X^0$. 
 } 
\end{remark}
{\revarn
\begin{remark}
	The statistical method relies on knowledge of the kernel. Thus, the estimation of the kernel, or of the roughness level $\alpha$, can be an issue. For results related to this statistical problem, we refer to \cite{chongStatisticalInferenceRough2024, corcueraAsymptoticTheoryBrownian2013}.
\end{remark}
}

\subsection{Proofs of the statistical results}
Before proving Theorem{\revarn s} \ref{Thm : consistency} and \ref{Thm : normality}, we have to
introduce some notations.

First we let for all $t\in [0,T]$, 
\begin{equation}
	\label{Eq : def Rt ve h}
R_{t}^{\ve,(h)}=Z^{\ve,(h)}_t-Z^\ve_t.
\end{equation}
 The following lemma is obtained exactly as {\revarn we deduce \eqref{Eq : Lp Y - Z in statement particular} from \eqref{Eq : Lp Y - Z in statement} in
 Theorem \ref{Th :  Lq norm Y^h Y},}
 accounting from the fact
that the diffusion coefficient $a$ is replaced by $\ve a$ in \eqref{Eq : Lp Y - Z in statement},
\begin{lemma}\label{Lem : upper bound Rve}
	 Assume {\revK \ref{Ass : global Lip} and \ref{Ass : kernel resolvent}}. For all $p\ge 1$, there exists $c(p) \ge 0$ such that for all $0<\ve\le 1$, 
	 $0<{\revarn t}\le T$.
\begin{equation*}
		\E\left[ \abs*{R_{t}^{\ve,(h)}}^p \right] \le c(p) ( \ve^p h^{p/2} + h^{\alpha p}  ). 
	\end{equation*}
\end{lemma}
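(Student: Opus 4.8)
The plan is to observe that Lemma~\ref{Lem : upper bound Rve} is simply Theorem~\ref{Th :  Lq norm Y^h Y} read off for the equation \eqref{Eq : Volterra SDE} instead of \eqref{Eq : Volterra SDE_general}. Indeed, \eqref{Eq : Volterra SDE} has exactly the structure of \eqref{Eq : Volterra SDE_general} with diffusion coefficient $\ve a(\cdot)$ and drift coefficient $b(\cdot,\theta^\star)$; with this identification the semi-martingale $Z^\ve$ of \eqref{Eq : def Zve} coincides with the process $Z$ of Section~\ref{S : Approx}, and $Z^{\ve,(h)}$ coincides with the reconstruction $Z^{(h)}$ built from the same resolvent kernel $L$ and the same mesh function $\varphi_h$. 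First I would check that the hypotheses of Theorem~\ref{Th :  Lq norm Y^h Y} hold uniformly in $\ve$: by Assumption~\ref{Ass : global Lip} and $\ve\in[0,1]$, the map $x\mapsto\ve a(x)$ is globally Lipschitz with constant $\ve C\le C$, and $x\mapsto b(x,\theta^\star)$ is globally Lipschitz with constant $C$.

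Applying the bound \eqref{Eq : Lp Y - Z in statement} with these coefficients then gives, for every $p\ge 1$,
\[
\norm{R_t^{\ve,(h)}}_{\mathbf{L}^p}
\le c(p)\,h^{1/2}\sup_{s\in[0,T]}\norm{\ve a(X_s^\ve)}_{\mathbf{L}^p}
+ c(p)\,h^{\alpha}\sup_{s\in[0,T]}\norm{b(X_s^\ve,\theta^\star)}_{\mathbf{L}^p}.
\]
Here the constant $c(p)$ produced by the proof of Theorem~\ref{Th :  Lq norm Y^h Y} depends only on $p$, $\alpha$, $T$ and the kernel $K$ --- it comes from the Burkholder--Davis--Gundy and Jensen inequalities together with the kernel estimates \eqref{Eq : int g 1 h alpha}--\eqref{Eq : int g 2 h half} --- and in particular does not depend on $\ve$; factoring $\ve$ out of the first supremum yields the announced $\ve^p$ gain.

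Next I would bound the two moment suprema: the linear growth of $a$ and of $b(\cdot,\theta^\star)$ (a consequence of Assumption~\ref{Ass : global Lip}) combined with the moment estimate \eqref{Eq : moment Volterra}, which is uniform in $\ve\in[0,1]$, gives $\sup_{s\in[0,T]}\E[|a(X_s^\ve)|^p]\le c(p)$ and $\sup_{s\in[0,T]}\E[|b(X_s^\ve,\theta^\star)|^p]\le c(p)$, uniformly in $\ve$ and in $h\le T$. Inserting these, raising to the power $p$, and using $(x+y)^p\le 2^{p-1}(x^p+y^p)$, I get $\E[|R_t^{\ve,(h)}|^p]\le c(p)(\ve^p h^{p/2}+h^{\alpha p})$, which is the claim.

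There is no genuine obstacle here: the whole content sits in Theorem~\ref{Th :  Lq norm Y^h Y}, and the only thing requiring a little attention is the bookkeeping of uniformity in $\ve$, namely that replacing $a$ by $\ve a$ keeps the Lipschitz constant bounded by $C$ and leaves the constants in \eqref{Eq : Lp Y - Z in statement} and \eqref{Eq : moment Volterra} untouched, so the estimate holds for all $\ve\in(0,1]$ at once.
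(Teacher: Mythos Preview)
Your proposal is correct and follows exactly the paper's own approach: the paper simply states that the lemma ``is obtained exactly as Theorem~\ref{Th :  Lq norm Y^h Y} accounting from the fact that the diffusion coefficient $a$ is replaced by $\ve a$ in \eqref{Eq : Lp Y - Z in statement}.'' Your write-up is in fact more careful than the paper's one-line justification, spelling out the uniformity of the Lipschitz constants and of the moment bounds \eqref{Eq : moment Volterra} in $\ve\in(0,1]$, which is precisely the bookkeeping needed to make the argument complete.
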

We also need two lemma{\revarn s} on {\revarn the} increments of $X^\ve$ and {\revarn on the} rate of approximation of $X^0$ by $X^\ve$.
\begin{lemma} \label{lem : shorter lemma increments}
		Assume \ref{Ass : global Lip}, then for all $p\ge 1$, there exists $c(p) \ge 0$ 
such that for all $0\le s \le t \le T$,
\begin{equation} \label{eq : control incre Xve}
	\E\left[\abs{X_{t}^\ve-{X_{s}^\ve}}^p\right] \le c(p) [\ve^p (t-s)^{(\alpha-1/2)p} + (t-s)^{\alpha p}].
\end{equation}
\end{lemma}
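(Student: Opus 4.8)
The plan is to read off the increment $X_t^\ve-X_s^\ve$ from the Volterra equation \eqref{Eq : Volterra SDE} and split it into a martingale part and a drift part, each further decomposed into the contribution of the fresh integration window $[s,t]$ and the contribution of the change of the kernel on $[0,s]$. Writing, for $0\le s\le t\le T$,
\begin{align*}
X_t^\ve-X_s^\ve&=\ve\int_s^t K(t-r)a(X_r^\ve)\,dB_r+\ve\int_0^s\big(K(t-r)-K(s-r)\big)a(X_r^\ve)\,dB_r\\
&\quad+\int_s^t K(t-r)b(X_r^\ve,\theta^\star)\,dr+\int_0^s\big(K(t-r)-K(s-r)\big)b(X_r^\ve,\theta^\star)\,dr,
\end{align*}
it suffices to establish \eqref{eq : control incre Xve} for $p\ge 2$, the case $p<2$ then following from Jensen's inequality, and each of the four terms will be handled separately.

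For the two stochastic terms I would apply the Burkholder--Davis--Gundy inequality, then the generalized Minkowski inequality to pull the $\mathbf{L}^{p/2}$ norm inside the time integral defining the quadratic variation, and finally the linear growth of $a$ (Assumption \ref{Ass : global Lip}) together with the uniform moment bound \eqref{Eq : moment Volterra}, which reduces everything to the two deterministic estimates
\begin{equation*}
\int_0^{t-s}K(u)^2\,du\le c\,(t-s)^{2\alpha-1},\qquad\int_0^s\big(K(t-r)-K(s-r)\big)^2\,dr\le c\,(t-s)^{2\alpha-1}.
\end{equation*}
The first is immediate from $K(u)=u^{\alpha-1}/\Gamma(\alpha)$ and $\alpha>1/2$. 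For the second, after the substitution $v=s-r$ and extension of the domain to $[0,\infty)$, the scaling relation $K(\lambda u)=\lambda^{\alpha-1}K(u)$ with $v=(t-s)w$ yields the bound $(t-s)^{2\alpha-1}\int_0^\infty\big(K(1+w)-K(w)\big)^2\,dw$, and the last integral is finite because the integrand is $O(w^{2\alpha-2})$ as $w\to0$ (integrable since $\alpha>1/2$) and $O(w^{2\alpha-4})$ as $w\to\infty$. Hence the stochastic part of the increment is bounded in $\mathbf{L}^p$-$p$th-moment by $c(p)\,\ve^p(t-s)^{(\alpha-1/2)p}$.

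For the two drift terms I would instead use Minkowski's integral inequality directly, together with the linear growth of $b(\cdot,\theta^\star)$ and \eqref{Eq : moment Volterra}, reducing matters to $\int_0^{t-s}K(u)\,du\le c\,(t-s)^{\alpha}$, which is obvious, and $\int_0^s\big|K(t-r)-K(s-r)\big|\,dr\le c\,(t-s)^{\alpha}$. For the latter, since $K$ is decreasing one has $|K(t-r)-K(s-r)|=K(s-r)-K(t-r)$, and a telescoping computation (substitute $v=s-r$ in the first piece and $u=t-r$ in the second) gives $\int_0^s\big(K(s-r)-K(t-r)\big)\,dr=\int_0^sK(v)\,dv-\int_{t-s}^tK(u)\,du\le\int_0^{t-s}K(v)\,dv=(t-s)^\alpha/\Gamma(\alpha+1)$. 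Thus the drift part contributes $c(p)\,(t-s)^{\alpha p}$, and summing the two contributions via $(a+b)^p\le 2^{p-1}(a^p+b^p)$ gives \eqref{eq : control incre Xve}. The only genuinely delicate step is the $\mathbf{L}^2$ kernel-difference estimate near the singularity $r=s$, which is exactly where $\alpha>1/2$ is used; everything else is a routine combination of BDG, Minkowski, and the a priori moment bound on $X^\ve$ (this is essentially Proposition 4.1 of \cite{richardDiscretetimeSimulationStochastic2021} with the $\ve$-dependence tracked term by term).
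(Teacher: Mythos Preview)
Your proof is correct and follows essentially the same route as the paper: the paper derives the lemma as an immediate consequence of Lemma~\ref{Lem : cond moment process X} in the Appendix, whose proof uses precisely your four-term decomposition $X^\ve_t-X^\ve_s=\sum_{l=1}^4 D^{\ve,l}_{s,t}$ together with BDG, Jensen/Minkowski, and the same kernel estimates $\int_0^s |K(t-u)-K(s-u)|\,du=O((t-s)^\alpha)$ and $\int_0^s (K(t-u)-K(s-u))^2\,du=O((t-s)^{2\alpha-1})$. The only difference is that the paper states these kernel bounds without proof and packages the computation as a conditional result (needed elsewhere), whereas you give a self-contained unconditional argument with the scaling computation for the $\mathbf{L}^2$ kernel-difference spelled out.
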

\begin{proof} It is a straightforward consequence of Lemma \ref{Lem : cond moment process X} in the Appendix.
\end{proof}
We recall a result on the convergence of $X^\ve$ as $\ve \to 0$.
\begin{lemma}[Lemma 3.2 in \cite{gloterDriftEstimationRough2025}]
	\label{Lem : conv Xve X0}
	Assume \ref{Ass : global Lip}, then for all $p\ge 1$, there exists $c(p) \ge 0$ such that
	\begin{equation*}
		\sup_{t\in[0,T]} \E\left[\abs*{X^\ve_t-X^0_t}^p\right] \le c(p) \ve^p.
	\end{equation*}
\end{lemma}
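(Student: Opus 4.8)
The plan is to introduce $\phi(t):=\E\big[|X^\ve_t-X^0_t|^p\big]$, derive for it a weakly singular Volterra integral inequality, and close that inequality by a fractional Gronwall argument. First I would observe that $\phi$ is finite and bounded on $[0,T]$ by \eqref{Eq : moment Volterra} (applied both at $\ve$ and at $\ve=0$), and that it suffices to treat $p\ge2$: for $1\le p<2$, Jensen's inequality gives $\phi(t)\le\E[|X^\ve_t-X^0_t|^2]^{p/2}$, so the case $p=2$ implies the rest. Fixing $p\ge2$ and subtracting from \eqref{Eq : Volterra SDE} the same equation with $\ve=0$, which $X^0$ solves, one gets
\begin{equation*}
X^\ve_t-X^0_t=\ve\int_0^tK(t-s)a(X^\ve_s)\,dB_s+\int_0^tK(t-s)\big(b(X^\ve_s,\theta^\star)-b(X^0_s,\theta^\star)\big)\,ds.
\end{equation*}

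Next I would bound the two terms separately. For the stochastic term, the Burkholder--Davis--Gundy inequality followed by Jensen's inequality with respect to the finite measure $K(t-s)^2\,ds$, exactly as in the proof of Theorem \ref{Th :  Lq norm Y^h Y}, gives a bound
\begin{equation*}
c(p)\Big(\int_0^tK(t-s)^2\,ds\Big)^{p/2-1}\int_0^tK(t-s)^2\,\E\big[|a(X^\ve_s)|^p\big]\,ds;
\end{equation*}
using $\int_0^tK(t-s)^2\,ds\le c$ (Section \ref{S : Model}), the linear growth of $a$ coming from \ref{Ass : global Lip}, and the uniform moment bound \eqref{Eq : moment Volterra}, this is at most $c(p)$, so the stochastic term contributes $c(p)\ve^p$ to $\phi(t)$. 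For the drift term, Jensen's inequality with respect to the finite measure $K(t-s)\,ds$ and the Lipschitz property of $b(\cdot,\theta^\star)$ give the bound $c(p)\int_0^tK(t-s)\phi(s)\,ds$. Collecting the two estimates yields the integral inequality $\phi(t)\le c(p)\ve^p+c(p)\int_0^tK(t-s)\phi(s)\,ds$ for all $t\in[0,T]$.

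The hard part will be to close this inequality: since $K(u)=u^{\alpha-1}/\Gamma(\alpha)$ blows up at $0$, the classical Gronwall lemma does not apply and one needs its fractional version. I would iterate the inequality, which produces the convolution powers $K^{\star n}(u)=u^{n\alpha-1}/\Gamma(n\alpha)$; because $\alpha>1/2>0$ the series $\sum_{n\ge1}c(p)^nK^{\star n}$ converges on $[0,T]$ to a generalized Mittag--Leffler kernel with finite integral over $[0,T]$, while the remainder $c(p)^n\int_0^tK^{\star n}(t-s)\phi(s)\,ds$ tends to $0$ since $\phi$ is bounded. This yields $\sup_{t\in[0,T]}\phi(t)\le C(p,T)\ve^p$, which is the claim; equivalently one may invoke directly a fractional Gronwall lemma for weakly singular kernels.
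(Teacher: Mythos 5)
Your proposal is correct. Note that the paper itself does not prove this lemma but simply imports it as Lemma 3.2 from the cited reference \cite{gloterDriftEstimationRough2025}, so there is no internal proof to compare against; nevertheless, your argument---subtract the Volterra equations (the initial conditions cancel since both start at $x_0$), control the stochastic term via Burkholder--Davis--Gundy and Jensen against the finite measure $K(t-s)^2\,ds$ using the uniform moment bound \eqref{Eq : moment Volterra}, control the drift term via Jensen against $K(t-s)\,ds$ and the Lipschitz property of $b(\cdot,\theta^\star)$ to produce the weakly singular Volterra integral inequality $\phi(t)\le c(p)\ve^p+c(p)\int_0^t K(t-s)\phi(s)\,ds$, and close by iterating with the semigroup property $K^{\star n}(u)=u^{n\alpha-1}/\Gamma(n\alpha)$ and the convergence of the Mittag--Leffler series---is precisely the standard and expected route, and all the ingredients you invoke (finiteness of $\phi$, reduction to $p\ge2$, uniformity in $\ve\in[0,1]$) are in order.
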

\subsubsection{Convergence of the contrast function}
\begin{lemma}\label{Lem :  convergence contrast}
	Assume \ref{Ass : global Lip}, {\revK \ref{Ass : kernel resolvent},} \ref{Ass : K mino} and \ref{Ass: step consistency}.
	Then, for all $\theta\in\Theta$, we have
	\begin{multline} \label{Eq : convergence contrast}
		\Delta^{-1}(	\mathbb{C}^{\ve,(h)}(\theta)-	\mathbb{C}^{\ve,(h)}(\theta^\star))
		\\	\xrightarrow[\P]{\ve \to 0} \int_0^T[b(X^0_s,\theta)-b(X^0_s,\theta^\star)]^*H(X^0_s)
		[b(X^0_s,\theta)-b(X^0_s,\theta^\star)] {\revarn ds}.
	\end{multline}
\end{lemma}
\begin{proof}
From the definitions \eqref{Eq : def Xi} and \eqref{Eq : def contrast}, we have
\begin{multline*}
	\mathbb{C}^{\ve,(h)}(\theta)=	\mathbb{C}^{\ve,(h)}(\theta^\star)
	+ 2 \Delta \sum_{j=0}^{N-1} \Xi_j^{\ve,(h)}(\theta^\star)^* H(X^\ve_{j\Delta})(b(X^\ve_{j\Delta},\theta) -b(X^\ve_{j\Delta},\theta^\star))
	\\
	{\revarn +}
	\Delta^2 \sum_{j=0}^{N-1} (b(X^\ve_{j\Delta},\theta) -b(X^\ve_{j\Delta},\theta^\star))^* H(X^\ve_{j\Delta})(b(X^\ve_{j\Delta},\theta) -b(X^\ve_{j\Delta},\theta^\star)).
\end{multline*}
From this we write $	\Delta^{-1}(	\mathbb{C}^{\ve,(h)}(\theta)-	\mathbb{C}^{\ve,(h)}(\theta^\star))= \mathcal{I}^{\ve,1}_N(\theta)+\mathcal{I}^{\ve,2}_N(\theta)$,
where
\begin{align*}
	\mathcal{I}^{\ve,1}_N(\theta)=&2 \sum_{j=0}^{N-1} \Xi_j^{\ve,(h)}(\theta^\star)^*
	H(X^\ve_{j\Delta})(b(X^\ve_{j\Delta},\theta) -b(X^\ve_{j\Delta},\theta^\star)),
	\\
	\mathcal{I}^{\ve,2}_N(\theta)=&\Delta\sum_{j=0}^{N-1} (b(X^\ve_{j\Delta},\theta) -b(X^\ve_{j\Delta},\theta^\star))^* H(X^\ve_{j\Delta})(b(X^\ve_{j\Delta},\theta) -b(X^\ve_{j\Delta},\theta^\star)).
\end{align*}
We study the convergence of $	\mathcal{I}^{\ve,2}_N(\theta)$ which is the main term. 
We use that $b(\cdot,\theta)$, $b(\cdot,\theta^\star)$ and $H(\cdot)$ are Lipschitz functions and that $\sup_{\theta \in \Theta} \abs{b(x,\theta)}+\abs{H(x)}\le C(1+\abs{x})$ from the Assumptions \ref{Ass : global Lip} {\revK and \ref{Ass : K mino}}. It enables us to get 
\begin{equation*}
	\abs*{\mathcal{I}^{\ve,2}_N(\theta)-\mathcal{I}^{0,2}_N(\theta)}
	 \\ \le C \Delta\sum_{j=0}^{N-1} \abs*{X^\ve_{j\Delta}-X^0_{j\Delta}} \left(1+\abs*{X^\ve_{j\Delta}}^2+\abs*{X^0_{j\Delta}}^2\right) 
\end{equation*}
Taking the expectation and using \eqref{Eq : moment Volterra} with Lemma \ref{Lem : conv Xve X0}, we deduce
\begin{align*}
\E \left[\abs*{\mathcal{I}^{\ve,2}_N(\theta)-\mathcal{I}^{0,2}_N(\theta)}\right]
&\le \Delta \sum_{j=0}^{N-1} C \E\left[\abs*{X^\ve_{j\Delta}-X^0_{j\Delta}}^2\right]^{1/2}
 \E\left[(1+\abs*{X^\ve_{j\Delta}}^4+\abs*{X^0_{j\Delta}}^4) \right]^{1/2}
 \\
 &\le C \Delta \sum_{j=0}^{N-1} \ve \le C \ve,
\end{align*}
as $\Delta N =O(1)$. 
It remains to study the convergence of the deterministic quantity $\mathcal{I}^{0,2}_N(\theta)$. We recognize a Riemann sum and using that $t \mapsto X^0_s$ is continuous, we deduce that $\mathcal{I}^{0,2}_N(\theta) \xrightarrow{N \to \infty}
  \int_0^T[b(X^0_s,\theta)-b(X^0_s,\theta^\star)]^*H(X^0_s)
[b(X^0_s,\theta)-b(X^0_s,\theta^\star)]ds$.
Consequently $\mathcal{I}^{\ve,2}_N(\theta)$ is converging in probability to the RHS of 
\eqref{Eq : convergence contrast}.

To end the proof of the proposition, we have to show the convergence to $0$ of 	$\mathcal{I}^{\ve,1}_N(\theta)$.
From \eqref{Eq : def Xi} and \eqref{Eq : def Rt ve h}, we have ${\revarn \Xi^{\ve,(h)}_{j}(\theta)}=Z^{\ve,(h)}_{j\Delta + \Delta}-Z^{\ve,(h)}_{j\Delta}-
\Delta b(X^{\ve}_{j\Delta},\theta)=
 Z^{{\revarn\ve}}_{j\Delta + \Delta}-Z^{{\revarn\ve}}_{j\Delta}-
\Delta b(X^{\ve}_{j\Delta},\theta) + R^{\ve,(h)}_{(j+1)\Delta}-R^{\ve,(h)}_{j\Delta}$. Inserting the dynamic of $Z^{{\revarn \ve}}$ into this equation yields 
\begin{multline}\label{Eq : Xi expanded}
	{\revarn \Xi^{\ve,(h)}_{j}(\theta)}=\ve
	\int_{j\Delta}^{(j+1)\Delta} a(X^\ve_s) dB_s + \int_{j\Delta}^{(j+1)\Delta} (b(X_s^\ve,\theta^\star) - b(X_{j\Delta}^\ve,\theta)) ds\\
	+ R^{\ve,(h)}_{(j+1)\Delta}-R^{\ve,(h)}_{j\Delta}.
\end{multline}
Let us denote $\xi_{{\revarn j}}:={\revarn \Xi^{\ve,(h)}_{j}(\theta^\star)^*} 	H(X^\ve_{j\Delta})(b(X^\ve_{j\Delta},\theta) -b(X^\ve_{j\Delta},\theta^\star))$, that we split into $\xi_{{\revarn j}}=\sum_{l=1}^3\overline{\xi}_{{\revarn j}}^l$ where
\begin{align*}
	\overline{\xi}_{{\revarn j}}^1&=\ve\left( \int_{j\Delta}^{(j+1)\Delta} a(X^\ve_s) dB_s\right)^{\revarn *} H(X^\ve_{j\Delta})(b(X^\ve_{j\Delta},\theta) -b(X^\ve_{j\Delta},\theta^\star)),
	\\
		\overline{\xi}_{{\revarn j}}^2&=
		\left( \int_{j\Delta}^{(j+1)\Delta} (b(X_s^\ve,\theta^{{\revarn\star}}) - b(X_{j\Delta}^\ve,\theta^{{\revarn\star}})) ds \right)^{\revarn *} H(X^\ve_{j\Delta})(b(X^\ve_{j\Delta},\theta) -b(X^\ve_{j\Delta},\theta^\star)),
		\\
			\overline{\xi}_{{\revarn j}}^3&=\big(R^{\ve,(h)}_{(j+1)\Delta}-R^{\ve,(h)}_{j\Delta}\big)^{\revarn *}
			H(X^\ve_{j\Delta})(b(X^\ve_{j\Delta},\theta) -b(X^\ve_{j\Delta},\theta^\star)).
\end{align*}
Let us focus on $\sum_{j=0}^{N-1} \overline{\xi}_{{\revarn j}}^1$.
As  $\overline{\xi}_{{\revarn j}}^1 $ is a
 $\mathcal{F}_{(j+1)\Delta}$-measurable variable, by Lemma 9 in 
 \cite{genon-catalotEstimationDiffusionCoefficient1993}, it is sufficient to prove
\begin{align} \label{Eq : cond xi moment 1}
	\sum_{j=0}^{N-1} \abs*{\E\left[\overline{\xi}_{{\revarn j}}^1 \mid \mathcal{F}_{j\Delta} \right]}\xrightarrow{\mathbb{P}} 0,
	\\ \label{Eq : cond xi moment 2}
	\sum_{j=0}^{N-1} \E\left[\abs[\big]{\overline{\xi}_{{\revarn j}}^1}^2  \mid \mathcal{F}_{j\Delta} \right]\xrightarrow{\mathbb{P}} 0.
\end{align}
We have $\E\left[\overline{\xi}_{{\revarn j}}^1 \mid \mathcal{F}_{j\Delta} \right]=0$ and \eqref{Eq : cond xi moment 1} is immediate. Moreover,
\begin{multline*} \E\left[\abs[\big]{\overline{\xi}_{{\revarn j}}^1}^2  \mid \mathcal{F}_{j\Delta} \right]
\le c \ve^2\E\left[\int_{j\Delta}^{(j+1)\Delta} \abs*{a(X^\ve_s)}^2 ds \mid \mathcal{F}_{j\Delta} \right]  \\
{\revarn \times}
\abs*{H(X^\ve_{j\Delta})}^2 \abs*{b(X^\ve_{j\Delta},\theta)
	 -b(X^\ve_{j\Delta},\theta^\star)}^2.
\end{multline*}
We deduce that 
\begin{multline*}\E\left[ \E\left[\abs[\big]{\overline{\xi}_{{\revarn j}}^1}^2  \mid \mathcal{F}_{j\Delta} \right]\right] \\ \le c \ve^2
	 \int_{j\Delta}^{(j+1)\Delta} \E\left[ \abs*{a(X^\ve_s)}^2 \abs*{H(X^\ve_{j\Delta})}^2 \abs*{b(X^\ve_{j\Delta},\theta)
	-b(X^\ve_{j\Delta},\theta^\star)}^2 \right] ds \le c \ve^2 \Delta  
\end{multline*}
using that $b$ and $H$ have at most linear growth and \eqref{Eq : moment Volterra}.
Then,
\begin{equation*}
\E \left[ \abs*{\sum_{j=0}^{N-1} \E\left[\abs[\big]{\overline{\xi}_{{\revarn j}}^1}^2  \mid \mathcal{F}_{j\Delta} \right]} \right] \le c \sum_{j=0}^{N-1} \ve^2 \Delta \le {\revarn c} \ve^2.
\end{equation*}
Consequently the convergence \eqref{Eq : cond xi moment 2} holds in ${\revarn\mathbf{L}}^1(\Omega)$ and thus in probability. It yields
$\sum_{j=0}^{N-1} \overline{\xi}_{{\revarn j}}^1 \xrightarrow{\mathbb{P}}0$.

Now, we study $\sum_{j=0}^{N-1} \overline{\xi}_{{\revarn j}}^2$.  By using Lemma 
\ref{lem : shorter lemma increments} and {\revarn the} Lipschitz property of $b$, we write
\begin{multline*}
	\E\left[\abs*{\int_{j\Delta}^{(j+1)\Delta}(b(X_s^\ve,\theta^\star) - b(X_{j\Delta}^\ve,\theta^\star))ds }^{\modar 4}    \right]  
	\le 	c\Delta^{\modar 3}\int_{j\Delta}^{(j+1)\Delta}
	\E\left[\abs*{X^\ve_{s}-X_{j\Delta}^\ve}^{\modar 4} \right]ds \\
	\le c\Delta^{\modar 4} [  \ve^{\modar 4} \Delta^{{\modar 4}(\alpha-1/2)}+\Delta^{{\modar 4}\alpha}] =
	c [  \ve^{\modar 4} \Delta^{{\modar 4\alpha+2}}+ \Delta^{{\modar 4}\alpha+{\modar 4}}] 
\end{multline*}
Using that $b$ and $H$ have at most linear growth, \eqref{Eq : moment Volterra} and Cauchy--Schwarz's inequality, we deduce that
\begin{equation*}
	\E\left[\abs[\big]{\overline{\xi}_{{\revarn j}}^2} \right] \le c (\ve^{\modar 2} \Delta^{{\modar 2\alpha+1}}+\Delta^{{\modar 2\alpha+2}}).
\end{equation*}
{\revarn Together with $N\Delta=O(1)$,} it entails $\sum_{j=0}^{N-1}\overline{\xi}_{{\revarn j}}^2\xrightarrow{{\revarn\mathbf{L}}^1(\Omega)}0$.

It remains to prove the convergence to zero of $\sum_{j=0}^{N-1} \overline{\xi}_{{\revarn j}}^3$.
Using H\"older inequality, \eqref{Eq : moment Volterra} and Lemma \ref{Lem : upper bound Rve},
 we have $\E\left[\abs[\big]{\overline{\xi}_{{\revarn j}}^3} \right] \le c[\ve h^{1/2}+h^{\alpha}]$.
We deduce $\norm*{\sum_{j=0}^{N-1} \overline{\xi}_{{\revarn j}}^3}_{{\revarn\mathbf{L}}^1(\Omega)} \le c [\ve N h^{1/2} + Nh^\alpha ]$. As $N=T/\Delta=T/(kh)$ it implies
$\norm{\sum_{j=0}^{N-1} \overline{\xi}_{{\revarn j}}^3}_{{\revarn\mathbf{L}}^1(\Omega)} \le c [\ve h^{-1/2}k^{-1} + h^{\alpha-1}k^{-1}]$. This quantity tends to $0$ 
by Assumption \ref{Ass: step consistency}. The proposition is proved.
\end{proof}
\begin{lemma}\label{lem : Lp bound der contrast}
		Assume \ref{Ass : global Lip}, {\revK \ref{Ass : kernel resolvent},} \ref{Ass : K mino} and \ref{Ass: step consistency} and that $b\in\mathcal{C}^{0,1}_\mathcal{P}(\mathbb{R}^{{\revarn d}}\times\overset{\circ}{\Theta})$.
	Then, for all $p \ge 1$, there exists $c(p)$ such that, we have
	\begin{equation*} 
		\E\left[ \abs*{\frac{\mathbb{C}^{\ve,(h)}(\theta)-\mathbb{C}^{\ve,(h)}(\theta')}{\Delta}}^p\right]
		\le c(p) \abs*{\theta-\theta'}^p, \quad \forall \theta, \theta' \in \Theta.
	\end{equation*}	
\end{lemma}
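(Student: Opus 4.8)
The plan is to use the symmetry of $H$ to turn the difference of the two quadratic contrasts into an expression that is \emph{linear} in the $\Xi^{\ve,(h)}_j$, and then to bound this expression termwise using \eqref{Eq : moment Volterra}, Lemma \ref{Lem : upper bound Rve} and a careful treatment of the martingale part. It suffices to treat $p\ge 2$, the case $p<2$ following by Jensen's inequality.

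First, since $H(X^\ve_{j\Delta})\in\mathcal{S}^d$ we have $a^*H(X^\ve_{j\Delta})a-b^*H(X^\ve_{j\Delta})b=(a-b)^*H(X^\ve_{j\Delta})(a+b)$ for all $a,b\in\mathbb{R}^d$; applying this with $a=\Xi^{\ve,(h)}_j(\theta)$, $b=\Xi^{\ve,(h)}_j(\theta')$ and using that $\Xi^{\ve,(h)}_j(\theta)-\Xi^{\ve,(h)}_j(\theta')=-\Delta(b(X^\ve_{j\Delta},\theta)-b(X^\ve_{j\Delta},\theta'))$ by \eqref{Eq : def Xi}, one gets
\[
\frac{\mathbb{C}^{\ve,(h)}(\theta)-\mathbb{C}^{\ve,(h)}(\theta')}{\Delta}=-\sum_{j=0}^{N-1}D_j^*\,H(X^\ve_{j\Delta})\,\big(\Xi^{\ve,(h)}_j(\theta)+\Xi^{\ve,(h)}_j(\theta')\big),\qquad D_j:=b(X^\ve_{j\Delta},\theta)-b(X^\ve_{j\Delta},\theta').
\]
Because $b\in\mathcal{C}^{0,1}_{\mathcal{P}}$ and $\Theta$ is convex, the mean value theorem gives $\abs{D_j}\le c(1+\abs{X^\ve_{j\Delta}}^c)\abs{\theta-\theta'}$, and by \ref{Ass : K mino} also $\abs{H(X^\ve_{j\Delta})}\le c(1+\abs{X^\ve_{j\Delta}}^c)$; by \eqref{Eq : moment Volterra} both have $\mathbf{L}^q$ norms bounded uniformly in $j$ and $\ve$, for every $q\ge 1$.

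Next I insert the decomposition \eqref{Eq : Xi expanded}, $\Xi^{\ve,(h)}_j(\theta)=M_j+A_j(\theta)+\rho_j$ with $M_j=\ve\int_{j\Delta}^{(j+1)\Delta}a(X^\ve_s)\,dB_s$, $A_j(\theta)=\int_{j\Delta}^{(j+1)\Delta}(b(X^\ve_s,\theta^\star)-b(X^\ve_{j\Delta},\theta))\,ds$ and $\rho_j=R^{\ve,(h)}_{(j+1)\Delta}-R^{\ve,(h)}_{j\Delta}$, and split the sum into $S_1=-2\sum_j D_j^*H(X^\ve_{j\Delta})M_j$, $S_2=-\sum_j D_j^*H(X^\ve_{j\Delta})(A_j(\theta)+A_j(\theta'))$ and $S_3=-2\sum_j D_j^*H(X^\ve_{j\Delta})\rho_j$. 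The terms $S_2$ and $S_3$ are routine: Minkowski's inequality, Hölder with three $\mathbf{L}^{3p}$ factors, the linear growth of $b$, \eqref{Eq : moment Volterra} and Lemma \ref{Lem : upper bound Rve} give $\norm{A_j(\theta)}_{\mathbf{L}^{3p}}\le\Delta\sup_s\norm{b(X^\ve_s,\theta^\star)-b(X^\ve_{j\Delta},\theta)}_{\mathbf{L}^{3p}}\le c\Delta$, hence $\norm{S_2}_{\mathbf{L}^p}\le cN\Delta\abs{\theta-\theta'}\le c\abs{\theta-\theta'}$ since $N\Delta\le T$, and $\norm{\rho_j}_{\mathbf{L}^{3p}}\le 2\sup_t\norm{R^{\ve,(h)}_t}_{\mathbf{L}^{3p}}\le c(\ve h^{1/2}+h^\alpha)$, hence $\norm{S_3}_{\mathbf{L}^p}\le cN(\ve h^{1/2}+h^\alpha)\abs{\theta-\theta'}=c(\ve h^{-1/2}k^{-1}+h^{\alpha-1}k^{-1})\abs{\theta-\theta'}$, which stays bounded under \ref{Ass: step consistency} (as $\ve h^{-1/2}=o(k)$ and $h^{\alpha-1}=o(k)$).

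The main obstacle is the martingale sum $S_1$: a direct Minkowski bound gives only $cN\ve\Delta^{1/2}\abs{\theta-\theta'}=c\ve\Delta^{-1/2}\abs{\theta-\theta'}$, which need not remain bounded. Instead, since $D_j^*H(X^\ve_{j\Delta})$ is $\mathcal{F}_{j\Delta}$-measurable, $D_j^*H(X^\ve_{j\Delta})M_j=\ve\int_{j\Delta}^{(j+1)\Delta}D_j^*H(X^\ve_{j\Delta})a(X^\ve_s)\,dB_s$, so that $S_1=-2\ve\int_0^{N\Delta}\phi_s\,dB_s$ with the adapted integrand $\phi_s=D_{\lfloor s/\Delta\rfloor}^*H(X^\ve_{\Delta\lfloor s/\Delta\rfloor})a(X^\ve_s)$, which satisfies $\abs{\phi_s}\le c(1+\abs{X^\ve_{\Delta\lfloor s/\Delta\rfloor}}^c)(1+\abs{X^\ve_s})\abs{\theta-\theta'}$. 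Burkholder--Davis--Gundy, then Minkowski in $\mathbf{L}^{p/2}$ and Hölder, yield
\[
\norm{S_1}_{\mathbf{L}^p}\le c(p)\,\ve\,\norm*{\int_0^{N\Delta}\abs{\phi_s}^2\,ds}_{\mathbf{L}^{p/2}}^{1/2}\le c(p)\,\ve\Big(\int_0^{N\Delta}\norm{\phi_s}_{\mathbf{L}^p}^2\,ds\Big)^{1/2}\le c(p)\,\ve\,(N\Delta)^{1/2}\abs{\theta-\theta'}\le c(p)\abs{\theta-\theta'},
\]
using $N\Delta\le T$, $\ve\le 1$ and \eqref{Eq : moment Volterra}. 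Combining the bounds on $S_1$, $S_2$, $S_3$ and raising to the power $p$ gives the claim; the only delicate point is precisely this rewriting of $S_1$ as a single Itô integral, which aggregates the $N$ martingale increments with the correct $N^{1/2}$ (rather than $N$) factor.
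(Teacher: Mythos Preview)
Your proof is correct and follows essentially the same approach as the paper: both expand the difference of contrasts into a sum that is linear in $\Xi_j^{\ve,(h)}$, decompose $\Xi_j^{\ve,(h)}$ via \eqref{Eq : Xi expanded} into martingale, drift and reconstruction-error parts, and the key step in both is rewriting the martingale sum as a single It\^o integral on $[0,N\Delta]$ before applying Burkholder--Davis--Gundy to gain the crucial $N^{1/2}$ rather than $N$. The only cosmetic differences are that you use the symmetric polarisation $(a-b)^*H(a+b)$ (which absorbs the paper's separate quadratic term $\mathbb{D}^{\ve,(h),4}$ into your $S_2$) and a three-factor H\"older bound where the paper uses Cauchy--Schwarz.
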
	
\begin{proof}
From \eqref{Eq : def contrast}, if we set $b(X^\ve_{j\Delta},\theta,\theta')= b(X_{j\Delta}^\ve,\theta)-b(X_{j\Delta}^\ve,\theta')$, we have
\begin{align*}
 \frac{1}{\Delta}\left[\mathbb{C}^{\ve,(h)}(\theta) - \mathbb{C}^{\ve,(h)}(\theta')\right]&=
 \begin{multlined}[t]	
2 \sum_{j=0}^{N-1}
b(X^\ve_{j\Delta},\theta,\theta')^* H(X^{\ve}_{j\Delta})  \Xi_j^{\ve,(h)}(\theta)
\\ +
\Delta
\sum_{j=0}^{N-1}
b(X^\ve_{j\Delta},\theta,\theta')^* H(X^{\ve}_{j\Delta}) {\revarn b(X^\ve_{j\Delta},\theta,\theta')}
\end{multlined}\\
&=\sum_{l=1}^4 \mathbb{D}^{\ve,(h),l}(\theta,\theta'),
\end{align*}
where using \eqref{Eq : Xi expanded}, we have set    
\begin{align*}
	\mathbb{D}^{\ve,(h),1}(\theta,\theta') &= 2 \ve \sum_{j=0}^{N-1}
	b(X^\ve_{j\Delta},\theta,\theta')^* H(X^{\ve}_{j\Delta}) \int_{j\Delta}^{(j+1)\Delta} a(X^\ve_s) dB_s ,
	\\
	\mathbb{D}^{\ve,(h),2}(\theta,\theta') &=2 \sum_{j=0}^{N-1}
	b(X^\ve_{j\Delta},\theta,\theta')^* H(X^{\ve}_{j\Delta}) \int_{j\Delta}^{(j+1)\Delta} [b(X^\ve_s,\theta^\star)-b(X^\ve_{j\Delta},\theta)] ds,
	\\
	\mathbb{D}^{\ve,(h),3}(\theta,\theta') &=2 \sum_{j=0}^{N-1}
	b(X^\ve_{j\Delta},\theta,\theta')^* H(X^{\ve}_{j\Delta}) [R^{\ve,(h)}_{(j+1)\Delta}-R^{\ve,(h)}_{j\Delta}],
	\\
	\mathbb{D}^{\ve,(h),4}(\theta,\theta') &=\Delta
	\sum_{j=0}^{N-1}
	b(X^\ve_{j\Delta},\theta,\theta')^* H(X^{\ve}_{j\Delta}) {\revarn b(X^\ve_{j\Delta},\theta,\theta')}.
\end{align*}
	From Burkholder-Davis-Gundy's inequality, we have
\begin{multline*}
	\E\left[\abs*{	\mathbb{D}^{\ve,(h),1}(\theta,\theta')}^p\right] 
 \\ \le c(p) \ve^p \E \left[ \sum_{j=0}^{N-1} \int_{j\Delta}^{(j+1)\Delta} \abs*{b(X^\ve_{j\Delta},\theta,\theta')}^2 \abs*{H(X^{\ve}_{j\Delta})}^2 \abs{a(X^\ve_s)}^2 ds \right]^{p/2}
\end{multline*}
Using Jensen's inequality with $N\Delta \le T$ implies,
it is smaller than\\
$  c(p) \ve^p
\sum_{j=0}^{N-1} 
\int_{j\Delta}^{(j+1)\Delta} \E \left[\abs*{b(X^\ve_{j\Delta},\theta,\theta')}^p \abs*{H(X^{\ve}_{j\Delta})}^p \abs*{a(X^\ve_s)}^p \right] ds$. 
As $b \in \mathcal{C}^{0,1}_{\mathcal{P}}$ together with $|H|^p$ and $|a|^p$ {\revarn having} sub-polynomial growth, the expectation in the previous equation is lower than
${\revarn c}\abs*{\theta-\theta'}^p 
\int_{j\Delta}^{(j+1)\Delta} \E \left[ (1+|X_{j\Delta}^\ve|^c)(1+|X_{s}^\ve|^c)\right]ds$ for some $c=c(p)>0$.
By \eqref{Eq : moment Volterra}, we deduce 
	$\E\left[\abs*{	\mathbb{D}^{\ve,(h),1}(\theta,\theta')}^p\right] 
	\le c(p) \ve^p|\theta-\theta'|^p$.

{\revarn Using the triangle} inequality, followed by the Cauchy--Schwarz inequality, we have
$\norm{	\mathbb{D}^{\ve,(h),2}(\theta,\theta')}_{{\revarn\mathbf{L}}^p(\Omega)}
\le c(p) \Delta \sum_{j=0}^{N-1} 
\norm{b(X^\ve_{j\Delta},\theta,\theta') }_{{\revarn\mathbf{L}}^{2p}(\Omega)}$
where we used that the ${\revarn\mathbf{L}}^{2p}(\Omega)$ norm of $H(X^\ve_{j\Delta}) 
	\int_{j\Delta}^{(j+1)\Delta} [b(X^\ve_s,\theta^\star)-b(X^\ve_{j\Delta},\theta)]ds$ is bounded by $c(p) \Delta$ for some constant $c(p)$. As $b \in \mathcal{C}^{0,1}_{\mathcal{P}}$, and $N\Delta \le T$, we deduce that
	$\norm{	\mathbb{D}^{\ve,(h),2}(\theta,\theta')}_{{\revarn\mathbf{L}}^p(\Omega)} \le c(p) \abs*{\theta-\theta'}$. With analogous computations, we show $\norm{	\mathbb{D}^{\ve,(h),4}(\theta,\theta')}_{{\revarn\mathbf{L}}^p(\Omega)} \le c(p) \abs*{\theta-\theta'}$.
For the last term, we write using that $b\in\mathcal{C}^{0,1}_{\mathcal{P}}$ and $H$ are at most with linear growth,
\begin{align*}
	\norm{	\mathbb{D}^{\ve,(h),3}(\theta,\theta') }_{{\revarn\mathbf{L}}^p(\Omega)} &\le c(p)
\abs{\theta-\theta'}	\sum_{j=0}^{N-1} \big[\norm{R^{\ve,(h)}_{(j+1)\Delta}}_{{\revarn\mathbf{L}}^{2p}(\Omega)} +\norm{R^{\ve,(h)}_{j\Delta}}_{{\revarn\mathbf{L}}^{2p}(\Omega)}\big]
	\\
	 &\le c(p)\abs{\theta-\theta'}		\sum_{j=0}^{N-1} [\ve h^{1/2}+h^\alpha], \text { by Lemma \ref{Lem : upper bound Rve},}
	 \\
	 &\le c(p)\abs{\theta-\theta'} 	N [\ve h^{1/2}+h^\alpha] \\ &\le c(p)\abs{\theta-\theta'}  [\ve k^{-1}h^{-1/2}+k^{-1}h^{\alpha-1}],
\end{align*}	
where in the last line we used  $N=O(1/\Delta)=O(k^{-1}h^{-1})$. From Assumption \ref{Ass: step consistency}, we deduce $	\norm{	\mathbb{D}^{\ve,(h),3}(\theta,\theta') }_{{\revarn\mathbf{L}}^p(\Omega)}\le c(p)\abs{\theta-\theta'}$. This concludes the proof of the lemma.
%
%
\end{proof}

\subsubsection{Proof of Theorem \ref{Thm : consistency}}
{\revarn In this proof, we set, for $\ve>0$, $\mathbb{C}^{\ve}(\theta,\theta^\star):=	\Delta^{-1}(\mathbb{C}^{\ve,(h)}(\theta)-\mathbb{C}^{\ve,(h)}(\theta^\star))$ and 
denote by $\mathbb{C}^{0}(\theta,\theta^\star)$} the right-hand side of \eqref{Eq : convergence contrast}. From Assumption \ref{Ass : K mino}, we have $\mathbb{C}^{0}(\theta,\theta^\star) \ge c \int_0^T \abs*{b(X^0_s,\theta)-b(X^0_s,\theta^\star)}^2ds$, which admits a unique minimum at $\theta=\theta^\star$ under \ref{Ass : ident}.
{\revarn From Lemma \ref{Lem :  convergence contrast}, the contrast function $\mathbb{C}^{\ve}(\theta,\theta^\star)$ converges in 
	$\mathbf{L}^{1}(\Omega)$ to $\mathbb{C}^{0}(\theta,\theta^\star)$ for each $\theta \in \Theta$, as $\ve\to0$. 
	Let us consider the continuity modulus $w_{\eta}^{\ve}:=\sup_{\abs*{\theta-\theta'}\le \eta}
	\abs*{\mathbb{C}^{\ve}(\theta,\theta^\star)-\mathbb{C}^{\ve}(\theta',\theta^\star)}$, for $\eta>0$.
Lemma \ref{lem : Lp bound der contrast} enables us to apply Theorem 19 in Appendix I of \cite{IbragimovHasminkiiBook81}. We deduce that
$\sup_{\ve \in(0,1]} \E\left[  w_{\eta}^{\ve}  \right] \xrightarrow{\eta\to0}0$.}
It ensures that the convergence {\revarn of the contrast function  $\mathbb{C}^{\ve}(\cdot,\theta^\star)$} is uniform on the compact convex set $\Theta$. 
 The consistency of the minimum contrast estimator {\revarn then} follows from the uniform convergence of the contrast function.
\qed

\subsubsection{Asymptotic normality of the quasi-score function}

\begin{lemma} \label{lem : conv quasi score} 
	Assume \ref{Ass : global Lip}, {\revK \ref{Ass : kernel resolvent},} \ref{Ass : K mino}, \ref{Ass: step normality} and that 
	{\modar $b \in \mathcal{C}^{0,1}_\mathcal{P}\big(\mathbb{R}^d \times \overset{\circ}{\Theta}\big) \cap \mathcal{C}^{2}_{\mathcal{P}}(\mathbb{R}^d\times\overset{\circ}{\Theta})$. } Then, we have
	\begin{equation} \label{eq : conv quasi score}
		\frac{1}{\ve \Delta} \partial_\theta \mathbb{C}^{\ve,(h)}(\theta^\star)
		\xrightarrow{\mathbb{P}} -2 \int_0^T  \partial_\theta b(X^0_s,\theta^\star)^*
		H(X^0_s)a(X_s^0)d B_s.
	\end{equation}
\end{lemma}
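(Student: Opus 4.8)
The plan is to start from the explicit form of the quasi-score. Differentiating \eqref{Eq : def contrast} in $\theta$, using that $H$ takes symmetric values and that $\partial_\theta \Xi_j^{\ve,(h)}(\theta)=-\Delta\,\partial_\theta b(X^\ve_{j\Delta},\theta)$, one obtains
\[
\frac1{\ve\Delta}\,\partial_\theta\mathbb{C}^{\ve,(h)}(\theta^\star)
=-\frac2\ve\sum_{j=0}^{N-1}\partial_\theta b(X^\ve_{j\Delta},\theta^\star)^* H(X^\ve_{j\Delta})\,\Xi_j^{\ve,(h)}(\theta^\star).
\]
Inserting the decomposition \eqref{Eq : Xi expanded} of $\Xi_j^{\ve,(h)}(\theta^\star)$ splits the right-hand side as $T_1+T_2+T_3$, where $T_1=-2\sum_j \partial_\theta b(X^\ve_{j\Delta},\theta^\star)^* H(X^\ve_{j\Delta})\int_{j\Delta}^{(j+1)\Delta}a(X^\ve_s)\,dB_s$ (the factor $\ve$ cancels here), $T_2=-\frac2\ve\sum_j \partial_\theta b(X^\ve_{j\Delta},\theta^\star)^* H(X^\ve_{j\Delta})\int_{j\Delta}^{(j+1)\Delta}\bigl(b(X^\ve_s,\theta^\star)-b(X^\ve_{j\Delta},\theta^\star)\bigr)ds$, and $T_3=-\frac2\ve\sum_j \partial_\theta b(X^\ve_{j\Delta},\theta^\star)^* H(X^\ve_{j\Delta})\bigl(R^{\ve,(h)}_{(j+1)\Delta}-R^{\ve,(h)}_{j\Delta}\bigr)$. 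The aim is then $T_1\to-2J$ and $T_2,T_3\to0$ in probability.

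For $T_1$ I would rewrite it as the single It\^o integral $-2\int_0^{N\Delta} \partial_\theta b(X^\ve_{\varphi_\Delta(s)},\theta^\star)^* H(X^\ve_{\varphi_\Delta(s)})a(X^\ve_s)\,dB_s$, with $\varphi_\Delta(s)=\Delta\lfloor s/\Delta\rfloor$ and $N\Delta\to T$, and use the It\^o isometry to reduce its $\mathbf{L}^2$-distance to $-2J$ to $\int_0^{N\Delta}\E\bigl[\,\abs{\,\partial_\theta b(X^\ve_{\varphi_\Delta(s)},\theta^\star)^* H(X^\ve_{\varphi_\Delta(s)})a(X^\ve_s)-\partial_\theta b(X^0_s,\theta^\star)^* H(X^0_s)a(X^0_s)\,}^2\bigr]ds$ plus a negligible end-effect. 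Since $\partial_\theta b$, $\partial_x\partial_\theta b$, $H$ and $a$ are of polynomial growth, the maps $x\mapsto\partial_\theta b(x,\theta^\star)$, $x\mapsto H(x)$, $x\mapsto a(x)$ are locally Lipschitz with polynomially bounded local constants; combining this with H\"older's inequality and \eqref{Eq : moment Volterra}, the integrand above is dominated by $\sup_{s\in[0,T]}\norm{X^\ve_{\varphi_\Delta(s)}-X^0_s}_{\mathbf{L}^p}$ up to a constant, and this tends to $0$ because $\norm{X^\ve_{\varphi_\Delta(s)}-X^\ve_s}_{\mathbf{L}^p}\le c(\ve\Delta^{\alpha-1/2}+\Delta^\alpha)\to0$ by Lemma \ref{lem : shorter lemma increments} and $\norm{X^\ve_s-X^0_s}_{\mathbf{L}^p}\le c\ve\to0$ by Lemma \ref{Lem : conv Xve X0}; hence $T_1\to-2J$ in $\mathbf{L}^2$, a fortiori in probability. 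For $T_2$, bounding $\E\abs{T_2}$ by Cauchy--Schwarz and absorbing the polynomially bounded factor $\partial_\theta b(X^\ve_{j\Delta},\theta^\star)^* H(X^\ve_{j\Delta})$ with \eqref{Eq : moment Volterra}, then using Lemma \ref{lem : shorter lemma increments} which gives $\norm{\int_{j\Delta}^{(j+1)\Delta}(b(X^\ve_s,\theta^\star)-b(X^\ve_{j\Delta},\theta^\star))\,ds}_{\mathbf{L}^2}\le c\Delta(\ve\Delta^{\alpha-1/2}+\Delta^\alpha)$, together with $N\Delta\le T$, one gets $\E\abs{T_2}\le c(\Delta^{\alpha-1/2}+\ve^{-1}\Delta^\alpha)$, which vanishes since $\alpha>1/2$ and $\Delta=kh=o(\ve^{1/\alpha})$ under Assumption \ref{Ass: step normality}.

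The heart of the argument is $T_3$, because the $\ve^{-1}$ prefactor makes the naive estimate $\ve^{-1}\sum_j\norm{R^{\ve,(h)}_{(j+1)\Delta}-R^{\ve,(h)}_{j\Delta}}_{\mathbf{L}^2}\le c\,\ve^{-1}N(\ve h^{1/2}+h^\alpha)$ too large. Instead I would use an Abel summation: writing $c_j:=\partial_\theta b(X^\ve_{j\Delta},\theta^\star)^* H(X^\ve_{j\Delta})$ and recalling $R^{\ve,(h)}_0=0$,
\[
\sum_{j=0}^{N-1}c_j\bigl(R^{\ve,(h)}_{(j+1)\Delta}-R^{\ve,(h)}_{j\Delta}\bigr)=c_{N-1}R^{\ve,(h)}_{N\Delta}-\sum_{j=1}^{N-1}(c_j-c_{j-1})R^{\ve,(h)}_{j\Delta}.
\]
After division by $\ve$, the boundary term has $\mathbf{L}^1$-norm at most $c\,\ve^{-1}\norm{R^{\ve,(h)}_{N\Delta}}_{\mathbf{L}^2}\le c(h^{1/2}+\ve^{-1}h^\alpha)$ by Lemma \ref{Lem : upper bound Rve}, which tends to $0$ precisely because $h=o(\ve^{1/\alpha^2})$ forces $h^\alpha=o(\ve^{1/\alpha})=o(\ve)$. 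For the variation term, Lemma \ref{lem : shorter lemma increments} with \eqref{Eq : moment Volterra} gives $\norm{c_j-c_{j-1}}_{\mathbf{L}^2}\le c(\ve\Delta^{\alpha-1/2}+\Delta^\alpha)$ and Lemma \ref{Lem : upper bound Rve} gives $\norm{R^{\ve,(h)}_{j\Delta}}_{\mathbf{L}^2}\le c(\ve h^{1/2}+h^\alpha)$, so its $\mathbf{L}^1$-norm is $\le c\,\ve^{-1}N(\ve\Delta^{\alpha-1/2}+\Delta^\alpha)(\ve h^{1/2}+h^\alpha)$ with $N\le T/\Delta$; expanding the product into four monomials and substituting $\Delta=kh$, each becomes a power of $\ve$, $h$, $k$ that I would check tends to $0$ by invoking the full set $k=o(\ve^{1/\alpha}h^{-1})$, $h^{\alpha-1}=o(k)$, $\ve h^{-1/2}=o(k)$, $h=o(\ve^{1/\alpha^2})$ of Assumption \ref{Ass: step normality}. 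Summing the three contributions then yields \eqref{eq : conv quasi score}.

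I expect $T_3$ to be the main obstacle: the $\ve^{-1}$ amplification of the reconstruction error, summed over $N\sim T/\Delta$ subintervals, is exactly what dictates the stringent sampling rate $h=o(\ve^{1/\alpha^2})$, and the Abel summation is what turns an apparently non-vanishing bound into a vanishing one. A minor but recurring subtlety is that $\partial_\theta b$ is only locally Lipschitz in $x$, so throughout $T_1$ and in the estimate of $\norm{c_j-c_{j-1}}$ one must systematically combine the mean-value increment bounds with the moments \eqref{Eq : moment Volterra} through H\"older's inequality rather than a global Lipschitz constant.
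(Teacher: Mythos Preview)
Your proposal is correct, and the handling of $T_1$, $T_2$ matches the paper's argument essentially verbatim. The interesting difference is in $T_3$.

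The paper also uses an Abel summation, but first replaces the random coefficients $c_j=J(X^\ve_{j\Delta})$ by their deterministic counterparts $J(X^0_{j\Delta})$; the mismatch $J(X^\ve_{j\Delta})-J(X^0_{j\Delta})$ picks up an extra factor $\ve$ from Lemma~\ref{Lem : conv Xve X0}, which cancels the $\ve^{-1}$ and leaves the bound $C(\ve h^{-1/2}k^{-1}+h^{\alpha-1}k^{-1})\to0$. For the Abel sum with deterministic coefficients, the paper invokes the bounded variation of $s\mapsto X^0_s$ (Lemma~\ref{lem : X^0 bounded var}) so that $\sum_j|J(X^0_{(j+1)\Delta})-J(X^0_{j\Delta})|\le C$ uniformly in $N$, yielding the clean estimate $\norm{\cdot}_{L^p}\le C\ve^{-1}\sup_j\norm{R^{\ve,(h)}_{j\Delta}}_{L^p}\le C(h^{1/2}+\ve^{-1}h^\alpha)$.

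Your route---Abel summing directly with the random $c_j$ and using $\norm{c_j-c_{j-1}}_{L^2}\le c(\ve\Delta^{\alpha-1/2}+\Delta^\alpha)$---also works, but you should actually carry out the verification you defer. Writing the conditions of Assumption~\ref{Ass: step normality} as $\Delta^\alpha=o(\ve)$, $h^\alpha=o(\Delta)$, $\ve h^{1/2}=o(\Delta)$, each of the four products is $o(\Delta^{\alpha-1/2})$ or $o(1)$: for instance $\ve^{-1}\Delta^{\alpha-1}h^\alpha=o(\ve^{-1}\Delta^\alpha)=o(1)$ and $\ve\Delta^{\alpha-3/2}h^{1/2}=\Delta^{\alpha-3/2}\cdot o(\Delta)=o(\Delta^{\alpha-1/2})$, the others similarly. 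Your approach is more elementary in that it avoids the bounded-variation Lemma~\ref{lem : X^0 bounded var} (and the underlying regularity theory for deterministic Volterra equations); the paper's approach buys a shorter and more transparent final estimate, with only one nontrivial condition $\ve^{-1}h^\alpha\to0$ to check.
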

\begin{proof}
	From \eqref{Eq : def Xi}--\eqref{Eq : def contrast}, we have
	\begin{equation}\label{Eq : partial C}
		 \frac{1}{\ve \Delta} \partial_\theta \mathbb{C}^{\ve,(h)}(\theta)= -\frac{2}{\ve} \sum_{j=0}^{N-1} {\revarn \partial_\theta b(X^\ve_{j\Delta},\theta)^*   H (X^\ve_{j\Delta}) \Xi_{j}^{\ve,(h)}(\theta).}  
	\end{equation}
	Using \eqref{Eq : Xi expanded},	
	 we deduce $ \frac{1}{\ve \Delta} \partial_\theta \mathbb{C}^{\ve,(h)}(\theta^\star)=\sum_{l=1}^3 	\mathbb{F}^{\ve,(h),l}(\theta^\star)$ with
	\begin{align} \nonumber
		\mathbb{F}^{\ve,(h),1}(\theta^\star) &= -2 \sum_{j=0}^{N-1}
		\partial_\theta b(X^\ve_{j\Delta},\theta^\star)^* H(X^{\ve}_{j\Delta}) \int_{j\Delta}^{(j+1)\Delta} a(X^\ve_s) dB_s ,
		\\ \nonumber
		\mathbb{F}^{\ve,(h),2}(\theta^\star) &=-\frac{2}{\ve} \sum_{j=0}^{N-1}
		\partial_\theta b(X^\ve_{j\Delta},\theta^\star)^* H(X^{\ve}_{j\Delta}) \int_{j\Delta}^{(j+1)\Delta} \big[ b(X^\ve_s,\theta^\star) -b(X^\ve_{j\Delta},\theta^\star) \big] ds,
		\\ \label{eq : def F 3 in score}
		\mathbb{F}^{\ve,(h),3}(\theta^\star) &=-\frac{2}{\ve} \sum_{j=0}^{N-1}
		\partial_\theta b(X^\ve_{j\Delta},\theta^\star)^* H(X^{\ve}_{j\Delta}) [R^{\ve,(h)}_{(j+1)\Delta}-R^{\ve,(h)}_{j\Delta}].
	\end{align}
	From $b\in \mathcal{C}^{0,1}_\mathcal{P}$, Assumption \ref{Ass : K mino} and Lemma \ref{Lem : conv Xve X0}, we deduce the convergence 
	{\revarn in probability} of
	$\mathbb{F}^{\ve,(h),1}(\theta^\star)$ to the {\revarn right hand-side} of \eqref{eq : conv quasi score}.
	
	We now prove the convergence to $0$ in ${\revarn\mathbf{L}}^1(\Omega)$ for the two other terms.	
	 Using Assumption \ref{Ass : global Lip}, and polynomial growth of 
	$\partial_\theta b$ and $H$, we get
	\begin{equation*}
		\E\left[\abs*{\mathbb{F}^{\ve,(h),2}(\theta^\star)}\right]
\le 
\frac{C}{\ve} \sum_{j=0}^{N-1} \\ 
\int_{j\Delta}^{(j+1)\Delta} 
\E\left[\left(1+\abs*{X^\ve_{j\Delta}}^C\right) \abs*{X^\ve_{s}-X^\ve_{j\Delta}}\right]ds		
	\end{equation*}
	By \eqref{eq : control incre Xve}, we deduce
$	\E\left[\abs[\big]{\mathbb{F}^{\ve,(h),2}(\theta^\star)}\right] \le C \ve^{-1} N \Delta (\ve \Delta^{\alpha-1/2}+\Delta^\alpha) \le C [\Delta^{\alpha-1/2} + \ve^{-1}\Delta^\alpha]$. This quantity goes to zero as $\Delta \to 0$ and $\ve^{-1}\Delta^\alpha=O(\ve^{-1}k^\alpha h^\alpha) =o(1)$ by Assumption \ref{Ass: step normality}.

{\modar 	
For the last term, we set $J(x)=\partial_\theta b(x,\theta^\star)^*H(x)$. Since $b \in \mathcal{C}^2_\mathcal{P}(\mathbb{R}^d \times \overset{\circ}{\Theta})$ and recalling Assumption 
\ref{Ass : K mino}, we have $\abs*{J(x)-J(x')}\le C \abs*{x-x'} (1+\abs{x}^C+\abs{x'}^C)$ for some $C>0$ and all $(x,x')\in \mathbb{R}^{2d}$.
With this notation and the definition \eqref{eq : def F 3 in score},  we write
\begin{multline*}
\mathbb{F}^{\ve,(h),3}(\theta^\star)=-\frac{2}{\varepsilon}
\sum_{j=0}^{N-1} J(X^0_{j\Delta}) [R^{\ve,(h)}_{(j+1)\Delta}-R^{\ve,(h)}_{j\Delta}] 
\\+ \frac{2}{\varepsilon}
\sum_{j=0}^{N-1} [ J(X^0_{j\Delta}) - J(X^\ve_{j\Delta}) ]  [R^{\ve,(h)}_{(j+1)\Delta}-R^{\ve,(h)}_{j\Delta}] 
=: \mathbb{F}^{\ve,(h),3,1}(\theta^\star)+\mathbb{F}^{\ve,(h),3,2}(\theta^\star).
\end{multline*}
We consider the second sum, $\mathbb{F}^{\ve,(h),3,2}(\theta^\star)$, on which we use the Lipschitz property of $J$ seen above. It yields, 
\begin{multline*}
	\abs*{\mathbb{F}^{\ve,(h),3,2}(\theta^\star)}
	\le \frac{C}{\ve}  \sum_{j=0}^{N-1}\abs*{X^0_{j\Delta} - X^\ve_{j\Delta}}  \left(1+\abs*{X^0_{j\Delta}}^C + 
	\abs*{X^\ve_{j\Delta}}^C \right)  \\ \times    \left(\abs*{R^{\ve,(h)}_{(j+1)\Delta}}+\abs*{R^{\ve,(h)}_{j\Delta}}\right) ,
\end{multline*} 
with some constant $C>0$. By Lemma \ref{Lem : upper bound Rve}, Equation \eqref{Eq : moment Volterra} and Lemma \ref{Lem : conv Xve X0}, we deduce for any $p \ge 1$,
\begin{align*}
	\norm*{\mathbb{F}^{\ve,(h),3,2}(\theta^\star)}_{{\revarn\mathbf{L}}^p(\Omega)}
	&\le
\frac{C}{\ve}  \sum_{j=0}^{N-1} \ve \times C \times (\ve h^{1/2} + h^\alpha)
\\
&\le C \left(\ve N h^{1/2} + N h^\alpha\right)
\le C \left(\ve h^{-1/2} k^{-1} + h^{\alpha-1} k^{-1}\right),
\end{align*}
where in the last line we used $N =O(1/\Delta)=O(1/(kh))$. {\revarn Now, Assumption \ref{Ass: step normality} implies} that $\mathbb{F}^{\ve,(h),3,2}(\theta^\star)$ goes to zero.

For the first sum $\mathbb{F}^{\ve,(h),3,1}(\theta^\star)$, we use a discrete integration by part formula, together with $R^{\ve,(h)}_{0}=Z^{\ve,(h)}_0-Z^\ve_0=0$, to write
\begin{equation*}
\mathbb{F}^{\ve,(h),3,1}(\theta^\star)=
\frac{2}{\ve} \sum_{j=0}^{N-1}
\left(J(X_{(j+1)\Delta}^0)-J(X_{j\Delta}^0)\right) R^{\ve,(h)}_{(j+1)\Delta}
~ - \frac{2}{\ve} J(X^0_{N\Delta}) R^{\ve,(h)}_{N\Delta}.
\end{equation*}
From the Lipschitz property of $x\mapsto J(x)$ for $x$ in a compact set, and the fact that $s \mapsto X^0_s$ is bounded by a constant, we deduce
\begin{equation*}
	\mathbb{F}^{\ve,(h),3,1}(\theta^\star) \le 
	\frac{C}{\ve} \sum_{j=0}^{N-1} \abs*{X_{(j+1)\Delta}^0 - X_{j\Delta}^0} \abs*{ R^{\ve,(h)}_{(j+1)\Delta}} +  \frac{C}{\ve} \abs*{ R^{\ve,(h)}_{N\Delta}}.
\end{equation*}
Since, $s \mapsto X^0_s$ is deterministic, we can write for any $p \ge 1$,
\begin{align*}
	\norm*{ 
	\mathbb{F}^{\ve,(h),3,1}(\theta^\star)
}_{{\revarn\mathbf{L}}^p(\Omega)} &\le 
\begin{multlined}[t]
	\frac{C}{\ve} \sum_{j=0}^{N-1} \abs*{X_{(j+1)\Delta}^0 - X_{j\Delta}^0}
	 \norm{ R^{\ve,(h)}_{(j+1)\Delta}}_{{\revarn\mathbf{L}}^p(\Omega)}  
	 \\+  \frac{C}{\ve} \norm{ R^{\ve,(h)}_{N\Delta}}_{{\revarn\mathbf{L}}^p(\Omega)} 
	 \end{multlined}
	 \\
	 &\le 	\frac{C}{\ve} \sup_{j=1,\dots,N}  \norm{ R^{\ve,(h)}_{j\Delta}}_{{\revarn\mathbf{L}}^p(\Omega)}, \text{ using Lemma \ref{lem : X^0 bounded var},}
	 \\ 
	 &\le 	C \left(h^{1/2} + \ve^{-1}h^\alpha\right)
	 \text{ by Lemma \ref{Lem : upper bound Rve}.}
\end{align*}
As $h\to0$ and $ h^{\alpha}=o(\Delta^\alpha) = o(\ve)$ from the condition $kh = o(\ve^{1/\alpha})$ in Assumption \ref{Ass: step normality}, we deduce that $	\mathbb{F}^{\ve,(h),3,1}(\theta^\star)$ converges to zero.}
%
\end{proof}
\begin{lemma} \label{lem : conv partial2 C}
		Assume \ref{Ass : global Lip}, {\revK \ref{Ass : kernel resolvent},} \ref{Ass : K mino}, \ref{Ass : ident}, \ref{Ass: step consistency}, and that $b\in\mathcal{C}^{2}_{\mathcal{P}}(\mathbb{R}^d \times \overset{\circ}{\Theta}) \cap \mathcal{C}^{0,3}_{\mathcal{P}}(\mathbb{R}^d \times \overset{\circ}{\Theta})$. Then, for all $(u,v) \in \{1,\dots,d_\Theta \}^2$, we have
		\begin{equation*}
	\sup_{s\in[0,1]}	\abs*{	 \frac{\partial^{2}}{\partial \theta_u {\revarn\partial}\theta_v}
			\frac{1}{\Delta}\mathbb{C}^{\ve,(h)}(\theta^\star+s( \widehat{\theta}_\ve - \theta )) - 2I_{u,v}  }  \xrightarrow[\P]{\ve\to0}0.
		\end{equation*}		
\end{lemma}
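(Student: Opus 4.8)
Differentiating the contrast \eqref{Eq : def contrast} twice in $\theta$ and using that $\partial_{\theta_u}\Xi_j^{\ve,(h)}(\theta)=-\Delta\,\partial_{\theta_u}b(X^\ve_{j\Delta},\theta)$ by \eqref{Eq : def Xi}, together with the symmetry of $H$, one gets
\begin{equation*}
\frac{1}{\Delta}\frac{\partial^2}{\partial\theta_u\partial\theta_v}\mathbb{C}^{\ve,(h)}(\theta)=B^{\ve}_{u,v}(\theta)+A^{\ve}_{u,v}(\theta),
\end{equation*}
where
\begin{align*}
B^{\ve}_{u,v}(\theta)&=2\Delta\sum_{j=0}^{N-1}\partial_{\theta_u}b(X^\ve_{j\Delta},\theta)^{*}H(X^{\ve}_{j\Delta})\partial_{\theta_v}b(X^\ve_{j\Delta},\theta),\\
A^{\ve}_{u,v}(\theta)&=-2\sum_{j=0}^{N-1}\partial^2_{\theta_u\theta_v}b(X^\ve_{j\Delta},\theta)^{*}H(X^{\ve}_{j\Delta})\,\Xi^{\ve,(h)}_j(\theta).
\end{align*}
Set $\theta^{(s)}_\ve=\theta^\star+s(\widehat{\theta}_\ve-\theta^\star)$, so that by Theorem~\ref{Thm : consistency} one has $\sup_{s\in[0,1]}\abs{\theta^{(s)}_\ve-\theta^\star}=\abs{\widehat{\theta}_\ve-\theta^\star}\xrightarrow{\P}0$. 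I would prove separately that $\sup_{s}\abs{B^{\ve}_{u,v}(\theta^{(s)}_\ve)-2I_{u,v}}\xrightarrow{\P}0$ and $\sup_{s}\abs{A^{\ve}_{u,v}(\theta^{(s)}_\ve)}\xrightarrow{\P}0$; adding the two gives the statement.

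The term $B^{\ve}_{u,v}$ is the main term. At $\theta=\theta^\star$ it is a Riemann sum of step $\Delta$ for the integral defining $I_{u,v}$; replacing $X^\ve_{j\Delta}$ by $X^0_{j\Delta}$ with the help of Lemma~\ref{Lem : conv Xve X0} and \eqref{Eq : moment Volterra} — justified by the fact that $x\mapsto\partial_{\theta_u}b(x,\theta^\star)^{*}H(x)\partial_{\theta_v}b(x,\theta^\star)$ is locally Lipschitz with polynomial growth, using $b\in\mathcal{C}^2_{\mathcal{P}}$ and Assumption~\ref{Ass : K mino} — and then invoking the continuity of $t\mapsto X^0_t$ yields $B^{\ve}_{u,v}(\theta^\star)\xrightarrow{\P}2I_{u,v}$. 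For the random argument, on the event $\{\abs{\widehat{\theta}_\ve-\theta^\star}\le\delta\}$ we have $\sup_{s}\abs{B^{\ve}_{u,v}(\theta^{(s)}_\ve)-B^{\ve}_{u,v}(\theta^\star)}\le C\delta\,\Delta\sum_{j=0}^{N-1}(1+\abs{X^\ve_{j\Delta}}^{c})$, since $\theta\mapsto\partial_{\theta_u}b(x,\theta)^{*}H(x)\partial_{\theta_v}b(x,\theta)$ is Lipschitz with polynomial-growth Lipschitz constant (from $b\in\mathcal{C}^{0,3}_{\mathcal{P}}$ and Assumption~\ref{Ass : K mino}); the $\mathbf{L}^1$-norm of the right-hand side is at most $C\delta$ uniformly in $\ve,h$ by \eqref{Eq : moment Volterra}. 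Combining with the consistency of $\widehat{\theta}_\ve$ and letting $\delta\to0$ gives the claim for $B^{\ve}_{u,v}$.

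For $A^{\ve}_{u,v}$ I would use the decomposition of $\Xi_j^{\ve,(h)}$ implied by \eqref{Eq : Xi expanded}, namely $\Xi^{\ve,(h)}_j(\theta)=M_j+V_j+W_j+\Delta\big(b(X^\ve_{j\Delta},\theta^\star)-b(X^\ve_{j\Delta},\theta)\big)$, with $M_j=\ve\int_{j\Delta}^{(j+1)\Delta}a(X^\ve_s)\,dB_s$, $V_j=\int_{j\Delta}^{(j+1)\Delta}\big(b(X^\ve_s,\theta^\star)-b(X^\ve_{j\Delta},\theta^\star)\big)ds$ and $W_j=R^{\ve,(h)}_{(j+1)\Delta}-R^{\ve,(h)}_{j\Delta}$. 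At $\theta=\theta^\star$ the last term vanishes, so, writing $c_j(\theta)=\partial^2_{\theta_u\theta_v}b(X^\ve_{j\Delta},\theta)^{*}H(X^{\ve}_{j\Delta})$, we get $A^{\ve}_{u,v}(\theta^\star)=-2\sum_j c_j(\theta^\star)(M_j+V_j+W_j)$. Here $\sum_j c_j(\theta^\star)M_j$ is a discrete martingale (the coefficients are $\mathcal{F}_{j\Delta}$-measurable and $M_j$ is conditionally centred), so by It\^o's isometry and \eqref{Eq : moment Volterra} its $\mathbf{L}^2$-norm is $O\big((\ve^2 N\Delta)^{1/2}\big)=O(\ve)$; and $\norm{\sum_j c_j(\theta^\star)V_j}_{\mathbf{L}^1}\le C\,T(\ve\Delta^{\alpha-1/2}+\Delta^\alpha)$ and $\norm{\sum_j c_j(\theta^\star)W_j}_{\mathbf{L}^1}\le C\,N(\ve h^{1/2}+h^\alpha)=C(\ve k^{-1}h^{-1/2}+k^{-1}h^{\alpha-1})$, both tending to $0$ by $\alpha>1/2$, $\Delta\to0$ and Assumption~\ref{Ass: step consistency} — these are exactly the estimates, via Lemma~\ref{lem : shorter lemma increments} and Lemma~\ref{Lem : upper bound Rve}, that already appear in the proofs of Lemma~\ref{Lem :  convergence contrast} and Lemma~\ref{lem : conv quasi score}. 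Hence $A^{\ve}_{u,v}(\theta^\star)\xrightarrow{\P}0$. To reach $\theta^{(s)}_\ve$ it then suffices to control $\sup_{s}\abs{A^{\ve}_{u,v}(\theta^{(s)}_\ve)-A^{\ve}_{u,v}(\theta^\star)}$, for which I would prove the equicontinuity estimate $\norm{A^{\ve}_{u,v}(\theta)-A^{\ve}_{u,v}(\theta')}_{\mathbf{L}^q}\le C\abs{\theta-\theta'}$ for all $\theta,\theta'\in\Theta$, uniformly in $\ve,h$; a Kolmogorov-type continuity estimate (taking $q>d_\Theta$) then gives $\E\big[\sup_{\abs{\theta-\theta^\star}\le\delta}\abs{A^{\ve}_{u,v}(\theta)-A^{\ve}_{u,v}(\theta^\star)}\big]\le C\delta^{\gamma}$ uniformly in $\ve,h$ for some $\gamma>0$, and combining with the consistency of $\widehat{\theta}_\ve$ and letting $\delta\to0$ concludes, exactly as for $B^{\ve}_{u,v}$.

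The main difficulty is this equicontinuity estimate for $A^{\ve}_{u,v}$. The naive pathwise bound $\abs{A^{\ve}_{u,v}(\theta)-A^{\ve}_{u,v}(\theta')}\le C\abs{\theta-\theta'}\sum_j(1+\abs{X^\ve_{j\Delta}}^{c})(\abs{M_j}+\abs{V_j}+\abs{W_j}+C\Delta)$ is not sufficient, because $\sum_j\norm{(1+\abs{X^\ve_{j\Delta}}^{c})M_j}_{\mathbf{L}^q}$ is of order $\ve N\Delta^{1/2}=\ve\,T(kh)^{-1/2}$, which need not even be bounded under Assumption~\ref{Ass: step consistency}. One has to keep the martingale structure in the increment $M_j$: for fixed $\theta,\theta'$ the coefficient $\big(\partial^2_{\theta_u\theta_v}b(X^\ve_{j\Delta},\theta)-\partial^2_{\theta_u\theta_v}b(X^\ve_{j\Delta},\theta')\big)H(X^{\ve}_{j\Delta})$ is $\mathcal{F}_{j\Delta}$-measurable and bounded in $\mathbf{L}^{2q}$ by $C\abs{\theta-\theta'}$ — this is where $b\in\mathcal{C}^{0,3}_{\mathcal{P}}$ enters, making $\partial^2_{\theta}b$ Lipschitz in $\theta$ with polynomial-growth constant — so the Burkholder--Davis--Gundy inequality for the resulting discrete martingale gives $\big\|\sum_j(\cdot)M_j\big\|_{\mathbf{L}^q}\le C\big(\sum_j\abs{\theta-\theta'}^2\ve^2\Delta\big)^{1/2}=C\abs{\theta-\theta'}\ve$. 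The remaining three contributions of $\Xi^{\ve,(h)}_j$ ($V_j$, $W_j$ and the $\Delta$-term), together with the part of the difference coming from $\Xi^{\ve,(h)}_j(\theta)-\Xi^{\ve,(h)}_j(\theta')=-\Delta\big(b(X^\ve_{j\Delta},\theta)-b(X^\ve_{j\Delta},\theta')\big)$, are handled by the crude bound above, which for these terms does sum to a quantity that is uniformly bounded (and in fact $o(1)$ for the $V_j$ and $W_j$ pieces).
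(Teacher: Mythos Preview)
Your proof is correct and follows essentially the same route as the paper: the same decomposition of $\tfrac{1}{\Delta}\partial^2_{\theta_u\theta_v}\mathbb{C}^{\ve,(h)}$ into a Riemann-sum term and a cross term with $\Xi_j^{\ve,(h)}$, the same Riemann-sum/Lemma~\ref{Lem : conv Xve X0} argument for the former, and the same BDG-based $L^q$-Lipschitz-in-$\theta$ control of the martingale piece of the latter. The only notable difference is your treatment of the $W_j=R^{\ve,(h)}_{(j+1)\Delta}-R^{\ve,(h)}_{j\Delta}$ contribution: you use the crude triangle-inequality bound $\sum_j\norm{c_jW_j}_{\mathbf{L}^1}\le CN(\ve h^{1/2}+h^\alpha)$, which indeed goes to zero under \ref{Ass: step consistency} since no $\ve^{-1}$ factor is present here, whereas the paper refers back to the Abel-summation trick of $\mathbb{F}^{\ve,(h),3,1}$ in Lemma~\ref{lem : conv quasi score}; your shortcut is valid and in fact simpler for this lemma.
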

	\begin{proof}
		From \eqref{Eq : def Xi} and \eqref{Eq : partial C},  we have for all $\theta \in \overset{\circ}{\Theta}$, 
		$	\frac{\partial^{2}}{\partial \theta_u \partial \theta_u}
		\frac{1}{\Delta}\mathbb{C}^{\ve,(h)}(\theta) = \mathbb{G}^{\ve,(h),1}(\theta)+\mathbb{G}^{\ve,(h),2}(\theta)$, where
		\begin{align*}
		\mathbb{G}^{\ve,(h),1}(\theta)&:=2 \Delta\sum_{j=0}^{N-1}
		\frac{\partial}{\partial \theta_u} b(X^\ve_{j\Delta},\theta)^{{\revarn *}} H(X^\ve_{j\Delta}) 
		\frac{\partial}{\partial \theta_v} b(X^\ve_{j\Delta},\theta)
		\\
		\mathbb{G}^{\ve,(h),2}(\theta)&:=-2 \sum_{j=0}^{N-1} 
		 \frac{\partial^{2}}{\partial \theta_u {\revarn\partial}\theta_v} b(X^{\ve}_{j\Delta},\theta)^{{\revarn *}} 
		 H(X^\ve_{j\Delta}) \Xi_{{\revarn j}}^{\ve,(h)}(\theta)		
		\end{align*}
Using that  $x \mapsto \partial_\theta b(x,\theta)$ is of class $\mathcal{C}^1$ with derivative having at most polynomial growth and the Lipschitz property of $H$, it is possible to get that
\begin{equation*}
	\E\left[ \sup_{\theta\in\overset{\circ}{\Theta}} \abs*{ 
			\mathbb{G}^{\ve,(h),1}(\theta) - \mathbb{G}^{0,(h),1}(\theta) } \right] \le C
			 \Delta \sum_{j=0}^{N-1} 	\E\left[ \abs*{X^\ve_{j\Delta}-X^0_{j\Delta}}^2\right]^{1/2}.
\end{equation*}		
		From Lemma \ref{Lem : conv Xve X0}, we deduce that 
		\begin{equation} \label{Eq : conv D2 G ve 0}
			\E\left[ \sup_{\theta\in\overset{\circ}{\Theta}} \abs*{ 
				\mathbb{G}^{\ve,(h),1}(\theta) - \mathbb{G}^{0,(h),1}(\theta) } \right] \xrightarrow{\ve\to0}0.
		\end{equation}
		From the continuity of the deterministic path $t \mapsto X^0_t$ for $t\in [0,T]$, we deduce that
		 the Riemann sum $	\mathbb{G}^{0,(h),1}(\theta)$ is converging uniformly for $\theta\in \Theta$ to $2\int_0^T 
			\frac{\partial}{\partial \theta_u} b(X^0_t,\theta)^{{\revarn *}} H(X^0_t) 
		\frac{\partial}{\partial \theta_v} b(X^0_t,\theta)dt$. The consistency of the estimator ${\revarn \widehat{\theta}_\ve}$ entails,
\begin{equation*}
	\sup_{s\in[0,1]}	\abs*{	\mathbb{G}^{0,(h),1}(\theta^\star+s( \widehat{\theta}_\ve - \theta^{{\revarn \star}} )) - 2I_{u,v}  }  \xrightarrow[\P]{\ve\to0}0.
	\end{equation*}		
Recalling \eqref{Eq : conv D2 G ve 0}, we see that the lemma will be proved as soon as we show
 $	\sup_{s\in[0,1]}	\abs*{\mathbb{G}^{\ve,(h),2} (\theta^\star+s( \widehat{\theta}_\ve - \theta^{{\revarn \star}} ))} \xrightarrow[\mathbb{P}]{\ve \to 0}0$.	
 By \eqref{Eq : Xi expanded}, we write $\mathbb{G}^{\ve,(h),2}(\theta)=\mathbb{G}^{\ve,(h),2,1}(\theta)+\mathbb{G}^{\ve,(h),2,2}(\theta)$ where
\begin{align*}
	\mathbb{G}^{\ve,(h),2,1}(\theta)&:=-2 \sum_{j=0}^{N-1} 
	\frac{\partial^{2}}{\partial \theta_u {\revarn\partial}\theta_v} b(X^{\ve}_{j\Delta},\theta)^{{\revarn *}}
	H(X^\ve_{j\Delta}) \ve \int_{j\Delta}^{(j+1)\Delta} a(X^\ve_t) dB_t \\	
	\mathbb{G}^{\ve,(h),2,2}(\theta)&:=
	\begin{multlined}[t]
			-2 \sum_{j=0}^{N-1} 
\frac{\partial^{2}}{\partial \theta_u {\revarn\partial}\theta_v} b(X^{\ve}_{j\Delta},\theta)^{{\revarn *}} 
H(X^\ve_{j\Delta}) \big[\int_{j\Delta}^{(j+1)\Delta} \big(b(X_t^\ve,\theta^\star) \\- b(X_{j\Delta}^\ve,\theta)\big) dt
+ R^{\ve,(h)}_{(j+1)\Delta}-R^{\ve,(h)}_{j\Delta}\Big]	
\end{multlined}
\end{align*}

As $	\mathbb{G}^{\ve,(h),2,1}(\theta)$ can be seen as a stochastic integral on $[0,T]$ driven by a {\revarn Brownian motion,} we can use {\revarn the
Burkholder-Davis-Gundy inequality,} with the fact that $b \in \mathcal{C}^{0,3}_\mathcal{P}$ to derive that for any $q \ge 1$,
\begin{align*}
&	\E \left[\abs*{\mathbb{G}^{\ve,(h),2,1}(\theta)}^q\right] \le \ve^q c(q), \quad \forall \theta \in \overset{\circ}{\Theta} \\
&	\E \left[\abs*{\mathbb{G}^{\ve,(h),2,1}(\theta)-\mathbb{G}^{\ve,(h),2,1}(\theta')}^q\right] \le \ve^q c(q)
	\abs{\theta-\theta'}^q, \quad \forall \theta,\theta' \in \overset{\circ}{\Theta}
\end{align*}
		It is sufficient to deduce that 
		 $\sup_{\theta \in \overset{\circ}{\Theta}} \abs*{\mathbb{G}^{\ve,(h),2,1}(\theta)} \xrightarrow[\mathbb{P}]{\ve \to 0}0$.
	
Finally, we focus on 	$\mathbb{G}^{\ve,(h),2,2}$. Using the sub-polynomial growth of the derivatives of $b$ with Lipschitz property of $H$, we deduce
\begin{align*}
\sup_{s \in [0,1]}	&\abs*{\mathbb{G}^{\ve,(h),2} (\theta^\star+s( \widehat{\theta}_\ve - \theta ))} \le
	\\
	& c \sum_{j=0}^{N-1} (1+\abs{X_{j\Delta}^\ve}^c) \int_{j\Delta}^{(j+1)\Delta} \abs*{ b(X^\ve_t,\theta^\star)-b(X^\ve_{j\Delta},\theta^\star)  }dt 
	\\ {\revarn +}
	& c \Delta \sum_{j=0}^{N-1} (1+\abs{X_{j\Delta}^\ve}^c) \abs*{\widehat{\theta}_\ve - \theta^\star}
	\\ {\revarn +}
	& c \sum_{j=0}^{N-1} (1+\abs{X_{j\Delta}^\ve}^c)\abs*{[R^{\ve,(h)}_{(j+1)\Delta}-R^{\ve,(h)}_{j\Delta}}.
\end{align*}
	The first and third terms in sum above can be shown to go to zero by computations similar to the study of the terms $	\mathbb{F}^{\ve,(h),2}$ and {\modar $	\mathbb{F}^{\ve,(h),3,1}$} in the proof of Lemma \ref{lem : conv quasi score}. Let us stress that the convergence is faster here due the missing $\frac{1}{\ve}$ renormalisation factor which appears in Lemma \ref{lem : conv quasi score}.	
	The second sum above goes to zero in probability due to the consistency of the estimator.	
\end{proof}

\subsubsection{Proof of Theorem \ref{Thm : normality}}

Remark that Assumption \ref{Ass: step normality} implies Assumption \ref{Ass: step consistency}. 

Then, by Taylor expansion of the contrast function around $\widehat{\theta}_\ve$, we have
\begin{align*}
	0&=\partial_\theta \mathbb{C}^{\ve,(h)}(\widehat{\theta}_\ve)
\\	&=\partial_\theta \mathbb{C}^{\ve,(h)}(\theta^\star) +
	\left(\int_0^1 D^2_\theta \mathbb{C}^{\ve,(h)}( \theta^\star+ s(\widehat{\theta}_\ve-\theta^\star) ) 
	 ds \right) {\revarn (\widehat{\theta}_\ve-\theta^\star),}
\end{align*}
 where $D^2_\theta \mathbb{C}^{\ve,(h)}$ is the matrix with entries 
 $\frac{\partial^{2}}{\partial \theta_u \theta_v}\mathbb{C}^{\ve,(h)}$ for $1\le u,v \le d_\Theta$.
 Now, the theorem is a consequences of Lemmas \ref{lem : conv quasi score}--\ref{lem : conv partial2 C} with Assumption \ref{Ass: Fisher inver}. 
\qed

\section{Numerical simulation}\label{S : num sim}

In this section, we explore by a Monte Carlo experiment the behaviour of the estimator on a finite sample. In particular, we focus on the effect of the choice of the tuning parameter $k$.

We consider a linear model
\begin{equation*}
	X^\varepsilon_t=X^\varepsilon_0+\ve \int_0^t K(t-s) dB_s
	+ \int_0^t K(t-s) (\theta_0 X_s^\ve+\theta_1) ds.
\end{equation*}
From the simple structure of this model, the estimator is explicitly given by solution of a $2\times2$ system,
\begin{equation} \label{Eq : est linear}
	\begin{bmatrix}
	\Delta \sum_{j=0}^{N-1} (X^\ve_{j\Delta})^2 &
	\Delta \sum_{j=0}^{N-1} X^\ve_{j\Delta} \\
	\Delta \sum_{j=0}^{N-1} X^\ve_{j\Delta} &
	N \Delta  
\end{bmatrix}
\begin{bmatrix}
\widehat{\theta}_{0,\ve}
	\\
\widehat{\theta}_{1,\ve}
\end{bmatrix}
=
\begin{bmatrix}
   \sum_{j=0}^{N-1} \big(Z^{\ve,(h)}_{(j+1)\Delta} - Z^{\ve,(h)}_{j\Delta} \big)  X^\ve_{j\Delta}
\\
	\sum_{j=0}^{N-1}  \big(Z^{\ve,(h)}_{(j+1)\Delta} - Z^{\ve,(h)}_{j\Delta} \big)
\end{bmatrix}.	
\end{equation}
The true value of the parameter is $(\theta_0^\star,\theta_1^\star)=(-1,1)$, and we set $\alpha=0.8$. {\revarn The sampling step on $X^\ve$ is $h=10^{-2}$.}
Tables \ref{T : T 1 n 100New}--\ref{T : T 50 n 5000New} correspond to the different horizons $T=1$, $T=10$ and $T=50$. These tables display the empirical mean, and the empirical standard deviation, rescaled by the rate $\ve^{-1}$, for the estimator $\widehat{\theta}_\ve$. 
{\revarn The tables also show results for $\widetilde{\theta}_\ve$ which is computed by using the same system as \eqref{Eq : est linear} but where values of the reconstructed semimartingale
	$(Z^{\ve,(h)}_{i\Delta})_i$ are replaced by the true values of semimartingal $(Z^\ve_{i\Delta})_i$. 
	Since $Z^\ve$ is not directly observed, the quantity $\widetilde{\theta}_\ve$ is not a feasible estimator. However, comparing $\widehat{\theta}_\ve$ and $\widetilde{\theta}_\ve$ allows us to assess the effect of the reconstruction step on the estimation.}
We used $10^3$ replications to compute these empirical quantities.
 The results for different choices of the tuning parameter $k$ are shown. 
 {\revarn The estimator performs well as soon as $\ve$ is small enough, as predicted by the theory. Larger values of $T$ improve the performance of the estimator.  	
 	We observe that the choice of $k$ affects the bias of the estimation, and the optimal choice depends on the values of $\ve$. In our setting, the optimal value of $k$ tends to be smaller, when $\ve$ is small. 
 	In few situations with small $k$, the difference between $\widehat{\theta}_\ve$ and $\widetilde{\theta}_\ve$ is noticeable, indicating that the reconstruction error of the semimartingal is not negligible. If $k$ is large, the estimators $\widehat{\theta}_\ve$ and $\widetilde{\theta}_\ve$ are very close, however, if $k$ is too large, they both suffer from a bias due to the sampling with step $\Delta$.} 
 Let us stress that the condition 
{\modar $h=o(\varepsilon^{1/\alpha^2})$} from Assumption \ref{Ass: step normality} is not satisfied in all the numerical experiments presented here. 
Indeed, for $\varepsilon=10^{-2}$ it would require that the step $h$ is negligible versus $10^{-2/0.8^2}\approx 7.5* 10^{-4}$. Despite this, we observe that the estimator performs well for much larger values of $h$. 
This fast sampling condition ensures that the three conditions on the choice of $k$ in Assumption \ref{Ass: step normality} are compatible. The first condition can be written $\Delta^{\alpha}=o(\ve^{-1})$ and imposes that $k$ is small. This condition on $\Delta$ 
appears in the proof of Lemma \ref{lem : conv quasi score} to reduce a bias arising from the discretization with step $\Delta$ of $X^\ve$. 
 The last two conditions impose that $k$ is large enough, in order to reduce the cumulative error coming for  the reconstruction of the path of $Z^\ve$ from the observation of the rough process $X^\ve$. 
 {\revarn When $\ve=10^{-2}$ it is impossible to choose $k$ such that the three conditions of 
 	\ref{Ass: step normality} are valid. Nevertheless, our simulations show that the best results are obtained when $k$ is small, and are very correct. On the other hand, choosing $\Delta$ too large consistently introduces bias.}
 
 Comparing with the behaviour of the Trajectory Fitting Estimator studied in \cite{gloterDriftEstimationRough2025}, we see that the TFE estimator exhibits a {\revarn more stable
 behaviour. Its performance is almost independent of the sampling step, and its implementation does not require to choose a tuning parameter like $k$.} 
On the other hand, the asymptotic variance of the QMLE estimator {\revarn is} smaller, in particular for large values of $T$. {\revarn This is consistent with the efficiency of the QMLE as discussed in Remark \ref{R : Fisher}.}

{\modar Tables \ref{T : T 1 n 100 alpha095New}--\ref{T : T 50 n 5000 alpha095New} display the results for $\alpha=0.95$. In this smoother case, the choice of $k$ appears to be a less critical than when $\alpha=0.8$. In contrast, results for $\alpha=0.6$, which corresponds to a rougher situation encountered in finance, show less stability of the estimator with respect to the choice of $k$ (see Tables  \ref{T : T 1 n 100 alpha06New}--\ref{T : T 50 n 5000 alpha060New}).}


\section{Appendix} \label{S : Appendix}
{\revarn We begin the Appendix by establishing a convenient} extension of the ${\revarn\mathbf{L}}^p$ control on increments of Volterra {\revarn processes} given in 
Proposition 4.1 of  \cite{richardDiscretetimeSimulationStochastic2021}.


\begin{lemma} \label{Lem : cond moment process X}
	Assume \ref{Ass : global Lip}.
	For any $p \ge 1$, there exists two collections of random variables $(A_{s,t}^{\ve}(p))_{0\le s < t \le T}$ and $(B_{s,t}^{\ve}(p))_{0\le s < t \le T}$ such that, for all $s$, $(\omega,t) \mapsto A_{s,t}^{\ve}(p)(\omega)$ and $(\omega,t) \mapsto B_{s,t}^{\ve}(p)(\omega)$ are measurable with respect to $\mathcal{F}\otimes\mathcal{B}((s,T])$ and such that:

	1) For all $0 \le s < t \le T$, the random variables $A_{s,t}^\ve$ and $B_{s,t}^\ve$ are $\mathcal{F}_s$-measurable.
	
	2)For all $p\ge 1$, there exists $c(p) \ge 0$, for all $0\le s < t\le T$, $\ve\in(0,1]$,
	\begin{equation} \label{Eq : condit moment increment Xve}
		\E \left[\abs{X^\ve_t-X^\ve_s}^p\mid \mathcal{F}_s\right] \le c(p) \left[
		\ve^{p} (t-s)^{(\alpha-1/2) p} A^\ve_{s,t}(p) + (t-s)^{\alpha p}  {B_{s,t}^\ve(p)}
		\right].
	\end{equation} 
	
	3) For all $p\ge 1$, $q\ge1$, there exists $c(p,q) \ge 0$ such that
	\begin{equation} \label{Eq : Lq finite Ast and Bst}
		\E\left[\abs*{A_{s,t}^\ve(p)}^q\right] \le c(p,q), \quad
		\E\left[\abs*{B_{s,t}^\ve(p)}^q\right] \le c(p,q), 
	\end{equation}
	for all $0\le s < t \le T$, $0<\ve\le 1$.
\end{lemma}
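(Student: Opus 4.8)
This is the conditional counterpart of Proposition 4.1 of \cite{richardDiscretetimeSimulationStochastic2021}, and the plan is, for fixed $0\le s<t\le T$, to decompose the increment $X^\ve_t-X^\ve_s$ into a piece that is already $\mathcal F_s$-measurable and a piece generated on $(s,t]$. Writing \eqref{Eq : Volterra SDE} at times $t$ and $s$ and subtracting gives $X^\ve_t-X^\ve_s=\ve D^{\ve,1}_{s,t}+D^{\ve,2}_{s,t}+\ve U^{\ve,1}_{s,t}+U^{\ve,2}_{s,t}$, where the memory terms are
\begin{align*}
D^{\ve,1}_{s,t}&=\int_0^s\big(K(t-r)-K(s-r)\big)a(X^\ve_r)\,dB_r, & D^{\ve,2}_{s,t}&=\int_0^s\big(K(t-r)-K(s-r)\big)b(X^\ve_r,\theta^\star)\,dr,
\end{align*}
and the fresh terms are $U^{\ve,1}_{s,t}=\int_s^t K(t-r)a(X^\ve_r)\,dB_r$, $U^{\ve,2}_{s,t}=\int_s^t K(t-r)b(X^\ve_r,\theta^\star)\,dr$. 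For each fixed $t$ the memory terms are $\mathcal F_s$-measurable (the integrals run over $[0,s]$, $t$ entering only through the deterministic kernel), so $\E[\abs{D^{\ve,i}_{s,t}}^p\mid\mathcal F_s]=\abs{D^{\ve,i}_{s,t}}^p$ and these contribute directly to $A^\ve_{s,t}(p),B^\ve_{s,t}(p)$; the fresh terms will be handled by conditional Burkholder--Davis--Gundy and conditional Jensen inequalities. It suffices to treat $p\ge2$, the range $1\le p<2$ following from $p=2$ via conditional Jensen and subadditivity of $x\mapsto x^{p/2}$, which only replaces $A,B$ by bounded powers of themselves.

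I would first record the elementary kernel estimates (with $\delta:=t-s$): $\int_0^\delta K(u)^2\,du\le c\,\delta^{2\alpha-1}$, $\int_0^\delta K(u)\,du\le c\,\delta^{\alpha}$, $\int_0^s\big(K(t-r)-K(s-r)\big)^2\,dr\le c\,\delta^{2\alpha-1}$ (substitute $r\mapsto s-r$, then rescale $u\mapsto\delta u$, so the integral becomes $c\,\delta^{2\alpha-1}\int_0^\infty\big((v+1)^{\alpha-1}-v^{\alpha-1}\big)^2\,dv<\infty$ since $\tfrac12<\alpha<1$), and $\int_0^s\abs{K(t-r)-K(s-r)}\,dr\le\int_0^\delta K(u)\,du\le c\,\delta^{\alpha}$ (using that $K$ is decreasing). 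For the memory terms, BDG and Jensen against the finite measures $\big(K(t-r)-K(s-r)\big)^2\,dr$ and $\abs{K(t-r)-K(s-r)}\,dr$ on $[0,s]$, together with the linear growth of $a,b$ and \eqref{Eq : moment Volterra}, give $\E[\abs{D^{\ve,1}_{s,t}}^{q'}]\le c(q')\,\delta^{(\alpha-1/2)q'}$ and $\E[\abs{D^{\ve,2}_{s,t}}^{q'}]\le c(q')\,\delta^{\alpha q'}$ for every $q'\ge1$, so the $\mathcal F_s$-measurable quantities $\delta^{-(\alpha-1/2)p}\abs{D^{\ve,1}_{s,t}}^p$ and $\delta^{-\alpha p}\abs{D^{\ve,2}_{s,t}}^p$ have $q$-th moments bounded uniformly in $s,t,\ve$.

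For the fresh terms I would introduce the single $\mathcal F_s$-measurable variable $\mathcal X^\ve:=1+\E[\sup_{u\in[0,T]}\abs{X^\ve_u}^p\mid\mathcal F_s]$; conditional Jensen and the uniform-in-$\ve\in(0,1]$ bound $\E[\sup_u\abs{X^\ve_u}^{pq}]\le c$, which follows from \eqref{Eq : moment Volterra} and the H\"older estimate via Kolmogorov's criterion, give $\E[\abs{\mathcal X^\ve}^q]\le c(p,q)$. Then conditional BDG applied to $v\mapsto\int_s^v K(t-r)a(X^\ve_r)\,dB_r$ on $[s,t]$, conditional Jensen against $K(t-r)^2\,dr$, the bound $\E[\abs{a(X^\ve_r)}^p\mid\mathcal F_s]\le c\,\mathcal X^\ve$, and $\int_s^t K(t-r)^2\,dr\le c\,\delta^{2\alpha-1}$ yield
\[
\E\big[\,\abs*{U^{\ve,1}_{s,t}}^p\mid\mathcal F_s\,\big]\le c(p)\,\delta^{(\alpha-1/2)p}\,\mathcal X^\ve,\qquad \E\big[\,\abs*{U^{\ve,2}_{s,t}}^p\mid\mathcal F_s\,\big]\le c(p)\,\delta^{\alpha p}\,\mathcal X^\ve,
\]
the second estimate using conditional Jensen against $K(t-r)\,dr$ and $\int_s^t K(t-r)\,dr\le c\,\delta^\alpha$. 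Collecting the four contributions and setting
\[
A^\ve_{s,t}(p):=c(p)\big(\delta^{-(\alpha-1/2)p}\abs{D^{\ve,1}_{s,t}}^p+\mathcal X^\ve\big),\qquad B^\ve_{s,t}(p):=c(p)\big(\delta^{-\alpha p}\abs{D^{\ve,2}_{s,t}}^p+\mathcal X^\ve\big)
\]
for a large enough $c(p)$ gives \eqref{Eq : condit moment increment Xve}, with $\mathcal F_s$-measurability (property 1)) and the uniform bounds \eqref{Eq : Lq finite Ast and Bst} (property 3)) coming from the above; the joint measurability in property 2) follows by taking the $t$-continuous modifications of $t\mapsto D^{\ve,i}_{s,t}$, which exist by continuity of the kernel differences into $L^2([0,s])$, resp. $L^1([0,s])$.

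The only genuinely non-mechanical point, and the one I expect to be the main obstacle, is producing the $\mathcal F_s$-measurable coefficients for the fresh term $\int_s^t K(t-r)a(X^\ve_r)\,dB_r$, whose integrand is not $\mathcal F_s$-measurable: the resolution is to push the conditional expectation through the time integral and to dominate $\E[\abs{X^\ve_r}^p\mid\mathcal F_s]$, uniformly in $r\in[s,t]$, by the single variable $\E[\sup_u\abs{X^\ve_u}^p\mid\mathcal F_s]$, whose uniform-in-$\ve$ $L^q$ control rests on a pathwise moment bound for $X^\ve$. Everything else --- the fractional-kernel arithmetic above and the conditional forms of BDG and Jensen --- is routine.
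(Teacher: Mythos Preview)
Your proof is correct and follows the same decomposition as the paper: split $X^\ve_t-X^\ve_s$ into the two $\mathcal F_s$-measurable ``memory'' integrals over $[0,s]$ and the two ``fresh'' integrals over $(s,t]$, absorb the former directly into $A^\ve_{s,t}(p),B^\ve_{s,t}(p)$ after rescaling by the appropriate powers of $\delta$, and control the latter via conditional BDG/Jensen. The only substantive difference lies in how you dominate the fresh contributions. You introduce the single $\mathcal F_s$-measurable variable $\mathcal X^\ve=1+\E[\sup_{u\le T}|X^\ve_u|^p\mid\mathcal F_s]$ and bound $\E[|a(X^\ve_r)|^p\mid\mathcal F_s]$ uniformly in $r$ by $c\,\mathcal X^\ve$; the paper instead sets $Z^\ve_{s,u}(p):=\E[|X^\ve_u|^p\mid\mathcal F_s]$ pointwise in $u$ and defines, e.g., $A^{\ve,1}_{s,t}(p)=(t-s)^{1-2\alpha}\int_s^t(t-u)^{2\alpha-2}[1+|Z^\ve_{s,u}(p)|]\,du$, whose $L^q$ bound follows from Minkowski's integral inequality together with the pointwise moment bound \eqref{Eq : moment Volterra}. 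Your route gives a slightly cleaner $A,B$ (no $u$-integral), at the cost of invoking the stronger uniform bound $\E[\sup_u|X^\ve_u|^{pq}]\le c$, which you correctly derive from the increment estimate via a quantitative Kolmogorov argument; the paper's route stays entirely within \eqref{Eq : moment Volterra} and thus avoids that extra step. Both constructions satisfy the required $\mathcal F_s$-measurability and joint $(\omega,t)$-measurability.
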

\begin{proof}
	For $s\in [0,T]$ and $p \ge 1$, we denote ${\revarn U^\ve_{s,t}(p)}:=\E\left[\abs*{X^\ve_t}^p  \mid \mathcal{F}_s\right]$. As we know that $\sup_t \E\left[ \abs*{X^\ve_t}^q \right] \le c(q)$ for all $q$, and that $X^\ve$ admits continuous paths, we deduce that $(\omega,t) \mapsto {\revarn U^\ve_{s,t}(p)}(\omega)$ is measurable. 
	We also deduce 
	\begin{equation} \label{Eq : upper bound Zst}
		\sup_{t\in[s,T]} {\revarn\E}\left[\abs*{{\revarn U^\ve_{s,t}(p)}}^q\right] \le c(p,q).
	\end{equation}
	%
	%
	%
	%
	%
	%
	%
	%
	By \eqref{Eq : Volterra SDE}, we write for $t>s$,
	$X^\ve_{t}-X^\ve_s=\sum_{l=1}^4 D^{\ve,l}_{s,t}$, where
	\begin{align*}
		&	D^{\ve,1}_{s,t}=\int_s^t K(t-u)b(X^\ve_u,\theta^\star) du, \quad  
		D^{\ve,2}_{s,t}=\ve \int_s^t K(t-u)a(X^\ve_u) dB_u,
		\\
		&	D^{\ve,3}_{s,t}=\int_0^s \left[ K(t-u)-K(s-u) \right]b(X^\ve_u,\theta^\star) du , 
		\\
		& 	D^{\ve,4}_{s,t}=\ve \int_0^s \left[ K(t-u)-K(s-u) \right] a(X^\ve_u) dB_u  .
	\end{align*}
	Let us upper bound the conditional moments {\revarn of} these quantities to {\revarn identify} the candidates for the variables 
	$A^{\ve}_{s,t}(p)$ and $B^{\ve}_{s,t}(p)$.
	
	For $D^{\ve,1}_{s,t}$, we use Jensen's inequality, and {\revK $\abs{K}\le c K_0$} to obtain
	\begin{multline*}
		{\revarn\E}\left[\abs*{D^{\ve,1}_{s,t}}^{{\revarn p}} \mid \mathcal{F}_s\right]	= E\left[ \abs*{\int_s^t K(t-u)b(X^\ve_u,\theta^\star) du}^p \mid  \mathcal{F}_s\right]
		 \\ \le  {\revK c} \left(\int_s^t K_{{\revK 0}}(t-u) du\right)^{p-1}  {\revarn\E}\left[ \int_s^t K_{{\revK 0}}(t-u)\abs*{b(X^\ve_u,\theta^\star)}^p du \mid  \mathcal{F}_s\right]
		\\ \le c (t-s)^{\alpha (p-1)} \int_s^t K_{{\revK 0}}(t-u) {\revarn\E}\left[  \abs*{b(X^\ve_u,\theta^\star)}^p \mid \mathcal{F}_s\right] du.
	\end{multline*}
	Now, we use $\abs{b(X^\ve_u,\theta^\star)}^p\le c(1+\abs*{X^\ve_u}^p)$, ${\revarn\E}\left[ 
	\abs{X^\ve_u}^p	\mid \mathcal{F}_s\right] = {\revarn U^\ve_{s,u}(p)}$ {\revK and the expression of $K_0$,} to deduce,
	\begin{equation*}
		{\revarn\E}\left[ \abs[\Big]{\int_s^t K_{{\revK 0}}(t-u)b(X^\ve_u,\theta^\star) du}^p \mid  \mathcal{F}_s\right]
		\le c (t-s)^{\alpha (p-1)}  \int_s^t (t-u)^{\alpha-1}\left[1+\abs*{{\revarn U^\ve_{s,u}(p)}}\right] du
	\end{equation*}
	We set $B^{\ve,1}_{s,t}(p)=(t-s)^{-\alpha}\int_s^t (t-{\revarn u})^{\alpha-1}\left[1+\abs*{{\revarn U^\ve_{s,u}(p)}}\right] du$. Then, we have obtained
	\begin{equation}\label{Eq : maj eq cond D1 st}
		{\revarn\E}\left[\abs*{D^{\ve,1}_{s,t}}^{{\revarn p}}  \mid  \mathcal{F}_s\right]
		\le c (t-s)^{\alpha p  }B^{\ve,1}_{s,t}(p).
	\end{equation}
	Remark that $(t-s)^{-\alpha}(t-u)^{\alpha-1}\one_{[s,t)}(u) du$ is a finite mass measure. By the triangle inequality, we deduce
	\begin{multline} \label{Eq : majo moment q B1st}
		\norm*{ B^{\ve,1}_{s,t}(p)  }_{{\revarn\mathbf{L}}^q(\Omega)} \le 
		(t-s)^{-\alpha}\int_s^t (t-u)^{\alpha-1}\norm{ 1+ \abs*{   {\revarn U^\ve_{s,u}(p)}}}_{{\revarn\mathbf{L}}^q(\Omega)}
		du	\\\le c (t-s)^{-\alpha}\int_s^t (t-u)^{\alpha-1}c(p,q) du\le c(p,q),
	\end{multline}
	where we used \eqref{Eq : upper bound Zst}.
	
	Concerning $D^{\ve,2}_{s,t}$, we use {\revarn BDG} inequality to obtain,
	\begin{align*}
		\E\left[\abs*{D^{\ve,2}_{s,t}}^{{\revarn}p} \mid \mathcal{F}_s \right] 
		&=\ve^p \E\left[ \abs*{\int_s^t K(t-u)a(X^\ve_u) dB_u}^p \mid  \mathcal{F}_s\right] 
		\\ &\le c(p)   \ve^p \E\left[ \abs*{\int_s^t K_{{\revK 0}}(t-u)^2\abs{a(X^\ve_u)}^2 du}^{p/2}\mid\mathcal{F}_s \right] 
		\\ &\le c(p)   \ve^p \left(\int_s^t K_{{\revK 0}}(t-u)^2du \right)^{p/2-1}{\revarn\E}\left[ \int_s^t K_{{\revK 0}}(t-u)^2\abs*{a(X^\ve_u)}^p du\mid\mathcal{F}_s \right] ,
		\\ &\le c(p)   \ve^p (t-s)^{(2\alpha-1)(p/2-1)}\E\left[ \int_s^t (t-u)^{2\alpha-2}\abs*{a(X^\ve_u)}^p du\mid\mathcal{F}_s \right] ,
	\end{align*}
	where we used {\revK $\abs*{K}\le cK_0$}, the Jensen's inequality to get the third line, 
	 and
	the definition of $K_{{\revK 0}}$ in the last one. By Assumption \ref{Ass : global Lip} and {\revarn the} definition of ${\revarn U^\ve_{s,u}(p)}$, it entails
	\begin{equation*}
		E\left[\abs*{D^{\ve,2}_{s,t}}^{{\revarn p}} \mid \mathcal{F}_s \right] \le 
		c(p)   \ve^p (t-s)^{(2\alpha-1)(p/2-1)}
		\int_s^t (t-u)^{2\alpha-2} \left[1+\abs*{{\revarn U^\ve_{s,u}(p)}}\right] du. 
	\end{equation*}
	We set $A^{\ve,1}_{s,t}(p) := (t-s)^{1-2\alpha} \int_s^t 
	(t-u)^{2\alpha-2} \left[1+\abs*{{\revarn U^\ve_{s,u}(p)}}\right] du$. With this notation, we have shown,
{\revarn	\begin{equation}\label{Eq : maj eq cond D2 st}
	E\left[\abs*{D^{\ve,2}_{s,t}}^{{\revarn p}} \mid \mathcal{F}_s \right] \le 
	c(p)   \ve^p (t-s)^{(\alpha-1/2)p}  A^{\ve,1}_{s,t}(p).
\end{equation}}
	Moreover, using the triangle inequality 
	and \eqref{Eq : upper bound Zst}, we have {\revarn for $q \ge 1$,}
	\begin{multline}\label{Eq : majo moment q A1st}
		\norm*{ A^{\ve,1}_{s,t}(p)  }_{{\revarn\mathbf{L}}^q(\Omega)} \le 
		(t-s)^{1-2\alpha}\int_s^t (t-u)^{2\alpha-2}\norm*{ 1+ \abs*{ {\revarn U^\ve_{s,u}(p)}}}_{{\revarn\mathbf{L}}^q(\Omega)}
		du	\\\le c (t-s)^{1-2\alpha}\int_s^t (t-u)^{2\alpha-2}c(p,q) du\le c(p,q).
	\end{multline}
	
	Now, we define for $0\le s <t \le {\revarn T}$, $A^{\ve}_{s,t}(p):=A^{\ve,1}_{s,t}(p)+
	{\revarn \ve^{-p}}
	(t-s)^{{\revarn (1/2-\alpha)p}}\abs*{D^{\ve,4}_{s,t}}^p$ and 
	$B^{\ve}_{s,t}(p):=B^{\ve,1}_{s,t}(p)+(t-s)^{-\alpha p}\abs*{D^{\ve,3}_{s,t}}^p$.
	Since $D^{\ve,3}_{s,t}$ and $D^{\ve,4}_{s,t}$ are $\mathcal{F}_s$-measurable variables, and collecting $X^\ve_{t}-X^\ve_s=\sum_{l=1}^4 D^{\ve,l}_{s,t}$ with \eqref{Eq : maj eq cond D1 st}, \eqref{Eq : maj eq cond D2 st}, we have
	$\E \left[\abs{X^\ve_t-X^\ve_s}^p\mid \mathcal{F}_s\right] \le c(p) 
	\bigl[  (t-s)^{\alpha p  }B^{\ve,1}_{s,t}(p) + \ve^p (t-s)^{(\alpha-1/2)p}  A^{\ve,1}_{s,t}(p) 
	+ \abs[\big]{D^{\ve,3}_{s,t}}^p+\abs[\big]{D^{\ve,4}_{s,t}}^p\bigr]=c(p) 
	\bigl[  (t-s)^{\alpha p  }  B^{\ve}_{s,t}(p)+  \ve^p (t-s)^{(\alpha-1/2)p} A^{\ve}_{s,t}(p)\bigr]$. This proves \eqref{Eq : condit moment increment Xve}.
	
	To end the proof of the lemma, it remains to check the validity of \eqref{Eq : Lq finite Ast and Bst}. As we already have shown upper bounds of ${\revarn\mathbf{L}}^q(\Omega)$ norms of $A^{\ve,1}_{s,t}(p)$ and $B^{\ve,1}_{s,t}(p)$
	in \eqref{Eq : majo moment q A1st} and \eqref{Eq : majo moment q B1st}, it remains to 
	see that 
	\begin{align} \label{Eq : control moment r D3 D4}
		\E\left[ \abs*{D^{\ve,3}_{s,t}}^r \right] \le c(r)(t-s)^{\alpha r} ~ \text{and,} \quad
		\E\left[ \abs*{D^{\ve,4}_{s,t}}^r \right] \le c(r)\ve^{\revarn r} (t-s)^{(\alpha-1/2)r},
	\end{align}
	where $r=pq$.
Using Jensen's inequality, we have
\begin{multline*}
	\E\left[ \abs*{D^{\ve,3}_{s,t}}^r \right]
	\le \left( \int_0^s \abs{K(t-u)-K(s-u)} du \right)^{r-1} \\
	{\revarn \times} \E\left[ \int_0^s
	\abs*{K(t-u)-K(s-u)} \abs*{b(X^\ve_u,\theta)}^rdu \right]
\end{multline*} 
Using \eqref{Eq : moment Volterra}, we deduce 
$\E\left[ \abs*{D^{\ve,3}_{s,t}}^r \right] \le \left( \int_0^s \abs{K(t-s)-K(s-u)} du \right)^{r}$. 
Now, we use that, {\revK by $\abs*{K'}\le c \abs*{K_0'}$, we have} $ \int_0^s \abs{K(t-u)-K(s-u)} du = O\big((t-s)^\alpha\big)$,  {\revK and} deduce
$\E\left[ \abs*{D^{\ve,3}_{s,t}}^r \right] \le C (t-s)^{\alpha r}$. It is the first part 
of \eqref{Eq : control moment r D3 D4}.
For the second part, we use successively BDG inequality and Jensen's inequality, to get 
\begin{multline*}
	\E\left[ \abs*{D^{\ve,4}_{s,t}}^r \right] 
	\le c(r) {\revarn \ve^r}	\E\left[ \abs*{\int_0^s( K(t-u)-K(s-u))^2 \abs*{a(X^\ve)}^2du}^{r/2}   \right]
	\le \\ 
	\le c(r) \left( \int_0^s (K(t-u)-K(s-u))^2 du \right)^{r/2-1} \\ \times \E\left[ \int_0^s
	(K(t-s)-K(s-u))^2 \abs{a(X^\ve_u,\theta)}^r du \right].
\end{multline*} 
Using \eqref{Eq : moment Volterra}, we deduce $\E\left[ \abs*{D^{\ve,4}_{s,t}}^r \right]  \le {\revarn c(r) \ve^r} \left( \int_0^s (K(t-u)-K(s-u))^2 du \right)^{r/2} \le {\revarn c(r) \ve^r} (t-s)^{(\alpha/2-1)r}$ as
$ \int_0^s (K(t-u)-K(s-u))^2 du = O\big((t-s)^{2\alpha-1}\big)$. This proves \eqref{Eq : control moment r D3 D4} and terminates the proof of the lemma.
\end{proof}
{\revK
\begin{lemma}\label{lem : X^0 bounded var}
	Assume Assumptions \ref{Ass : global Lip} and that $b\in \mathcal{C}^{2,0}_{\mathcal{P}}(\mathbb{R}^d \times \overset{\circ}{\Theta})$. Then, the function $ \left\{\begin{array}{rl}
	 [0,T] &\to \mathbb{R}^d\\
	s&\mapsto X^0_s\end{array}\right.$ has bounded variation.	
\end{lemma}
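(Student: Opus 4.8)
The plan is to exploit the smoothing of the fractional integral \emph{one derivative beyond} the H\"older scale. Putting $\ve=0$ in \eqref{Eq : Volterra SDE}, the path $X^0$ solves the deterministic equation $X^0_t=x_0+\frac{1}{\Gamma(\alpha)}\int_0^t(t-s)^{\alpha-1}g(s)\,ds$ with $g(s):=b(X^0_s,\theta^\star)$. By \eqref{Eq : moment Volterra} (valid for $\ve\in[0,1]$) and the continuity of the trajectories recalled in Section \ref{S : Model}, $X^0$ is continuous and bounded on $[0,T]$, hence so is $g$. A first crude estimate on the fractional integral of a bounded function shows that $t\mapsto\frac{1}{\Gamma(\alpha)}\int_0^t(t-s)^{\alpha-1}g(s)\,ds$ is $\alpha$-H\"older on $[0,T]$: for $t_1<t_2$ one writes the increment as $\int_{t_1}^{t_2}(t_2-s)^{\alpha-1}g(s)\,ds+\int_0^{t_1}[(t_1-s)^{\alpha-1}-(t_2-s)^{\alpha-1}]g(s)\,ds$ and bounds each term by a constant times $\|g\|_\infty(t_2-t_1)^\alpha$. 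Thus $X^0\in C^\alpha([0,T])$, and since $b(\cdot,\theta^\star)$ is Lipschitz (Assumption \ref{Ass : global Lip}) and $X^0$ is bounded, $g=b(X^0_\cdot,\theta^\star)\in C^\alpha([0,T])$ as well.

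The core step is the following differentiation formula: if $g\in C^\lambda([0,T])$ with $\lambda+\alpha>1$, then $F(t):=\int_0^t(t-s)^{\alpha-1}g(s)\,ds$ is $C^1$ on $(0,T]$ with
$$F'(t)=g(t)\,t^{\alpha-1}+(\alpha-1)\int_0^t(t-s)^{\alpha-2}\bigl(g(s)-g(t)\bigr)\,ds .$$
I would prove this directly from the difference quotient $\frac{F(t+\delta)-F(t)}{\delta}$, split into the boundary term $\frac{1}{\delta}\int_t^{t+\delta}(t+\delta-s)^{\alpha-1}g(s)\,ds$ and $\frac{1}{\delta}\int_0^t[(t+\delta-s)^{\alpha-1}-(t-s)^{\alpha-1}]g(s)\,ds$, substituting $g(s)=g(t)+(g(s)-g(t))$ in both. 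The two contributions carrying the constant $g(t)$ each produce a term of order $\delta^{\alpha-1}$, and \emph{these divergent terms cancel}, leaving $g(t)t^{\alpha-1}+o(1)$; the two contributions carrying $g(s)-g(t)$ converge by dominated convergence, using $|(t+\delta-s)^{\alpha-1}-(t-s)^{\alpha-1}|\le(1-\alpha)\,\delta\,(t-s)^{\alpha-2}$ together with the integrability of $(t-s)^{\alpha-2+\lambda}$ near $s=t$, which is exactly where $\lambda+\alpha>1$ is used.

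Since $\alpha\in(1/2,1)$ one may take $\lambda=\alpha$, so $\lambda+\alpha=2\alpha>1$, and apply the formula to $g$: then $|F'(t)|\le\|g\|_\infty\,t^{\alpha-1}+\frac{1-\alpha}{2\alpha-1}[g]_{C^\alpha}\,t^{2\alpha-1}\le C(1+t^{\alpha-1})$, which is integrable on $[0,T]$. Equivalently — and avoiding any appeal to absolute continuity — I would decompose $F(t)=g(0)\frac{t^{\alpha}}{\alpha}+\widetilde F(t)$ with $\widetilde F(t)=\int_0^t(t-s)^{\alpha-1}\bigl(g(s)-g(0)\bigr)\,ds$: the first summand is coordinatewise monotone, hence of bounded variation; and $h:=g-g(0)$ vanishes at $0$ with $|h(s)|\le[g]_{C^\alpha}s^{\alpha}$, so the same formula applied to $h$ gives $\sup_{(0,T]}|\widetilde F'|<\infty$, whence $\widetilde F$ (continuous at $0$ with $\widetilde F(0)=0$) is Lipschitz on $[0,T]$.

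In either form, $F$ is of bounded variation on $[0,T]$, and therefore so is $X^0=x_0+\frac{1}{\Gamma(\alpha)}F$, with $V(X^0;[0,T])=\frac{1}{\Gamma(\alpha)}\int_0^T|F'(s)|\,ds<\infty$. The main obstacle is the differentiation formula of the second paragraph — concretely, carrying out the cancellation of the two $\delta^{\alpha-1}$ boundary contributions and justifying the passage to the limit under the integral sign; everything else is elementary.
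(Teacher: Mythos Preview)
Your argument is correct and takes a genuinely different route from the paper. The paper's proof simply invokes two external results: Theorem~2.1 of Brunner (1999) to get $X^0\in C^1((0,T])$, and Theorem~3.1 of Li (2019) to obtain an expansion $X^0_t=x_0+c\,t^\alpha+c'\,t+y^\star_t$ with $y^\star\in C^1([0,t^*])$ near the origin, and then patches the two intervals. Your proof is self-contained: you first bootstrap $X^0$ from bounded to $C^\alpha$ via the obvious fractional-integral estimate, then establish the differentiation formula $F'(t)=g(t)t^{\alpha-1}+(\alpha-1)\int_0^t(t-s)^{\alpha-2}(g(s)-g(t))\,ds$ for $g\in C^\lambda$ with $\lambda+\alpha>1$, and apply it with $\lambda=\alpha$ (exploiting $2\alpha>1$) to get $F'\in L^1(0,T)$. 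Both proofs isolate the same singular behaviour $t^{\alpha-1}$ of the derivative near $t=0$; yours makes explicit why the condition $\alpha>1/2$ is what is needed (it is exactly $\lambda+\alpha>1$ after one bootstrap), and uses only the Lipschitz property of $b$ from Assumption~\ref{Ass : global Lip} rather than the full $\mathcal{C}^1_{\mathcal{P}}$ hypothesis, whereas the paper's citation approach trades transparency for brevity.
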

\begin{proof}
	We apply Theorem 1 in \cite{millerSmoothnessSolutionsVolterra1971} and deduce that the solution $X^0$ of the deterministic Volterra equation 
	\eqref{Eq : Volterra SDE} with $\ve=0$ is $\mathcal{C}^1$ on $(0,\infty)$. Remark that \eqref{Eq : comp K K0}, with the fact that $u \mapsto K_0(u)$ is nonincreasing, implies Assumption (A3) of \cite{millerSmoothnessSolutionsVolterra1971}. 
	Also, as noted in the beginning of the proof of Theorem 1 in \cite{millerSmoothnessSolutionsVolterra1971}, the function
	$t \mapsto \frac{\partial X^0_t}{\partial t}$ is in $\mathbf{L}^{1}((0,T])$. It follows that $s\mapsto X^0_s$ has finite variation on $[0,T]$.
%
\end{proof}
}
{\revK
\begin{proposition} \label{Prop : suff cond for A2}
	Assume that the kernel $K$ is given by $K=K_0+K_0 \star G$ for  
	$G: (0,\infty) \to \mathbb{R}$
	a locally bounded measurable function. Then,  Assumption \ref{Ass : kernel resolvent} is valid.	
\end{proposition}
\begin{proof}
	Recall that $L_0$ is the first kind resolvent of $K_0$. We consider the following functional linear equation in $L$, 
		\begin{equation*}
		L=L_0-G\star L.
	\end{equation*}
	As $L_0$ and $G$ are locally integrable, we know by Theorem 3.5 in \cite{gripenbergVolterraEquationsFirst1980} that the equation admits a solution $L : (0,\infty) \to \mathbb{R}$ which is locally integrable. Moreover, since $G$ is locally bounded and $L$ locally integrable, we see that for all $T\ge 0$,
	$\norm*{G \star L}_{\mathbf{L}^\infty((0,T))} \le \norm*{G}_{\mathbf{L}^\infty((0,T))} \norm*{G}_{\mathbf{L}^1((0,T))}<\infty$. Hence $L-L_0$ is locally bounded, which is sufficient to deduce \eqref{Eq : comp L L0}. It remains to check that $L$ is the first kind resolvent of $K$. We write
	$L \star K= L \star (K_0 + K_0 \star G)= L\star K_0 + L \star K_0 \star G = (L_0 - G \star L) \star K_0 +  L \star K_0 \star G
	=L_0 \star K_0 -  G \star L \star K_0 +  L \star K_0 \star G =L_0 \star K_0 =    1$.
\end{proof}

}

%
%

\begin{table}[p]\caption{$\alpha=0.8$, $(\theta_0,\theta_1)=(-1,1)$, $T=1$, $h=10^{-2}$}  \label{T : T 1 n 100New}
	\centering \footnotesize
	\begin{tblr}{
			vline{1,2,3,5,7,9} = {1-12}{},
			vline{4,6,8} = {3-12}{},
			cell{2}{3-8} = {c=1}{c},
			hline{1,3,5,7,9,11,13} = {-}{},	
		}
		\SetCell[c=1,r=2]{c}\backslashbox{$\Delta$}{$\ve$}	&  & \SetCell[c=2]{c}1/10 & & \SetCell[c=2]{c}1/20 & & \SetCell[c=2]{c} 1/100 &\\
		& & $\widehat{\theta}_\ve$ & $\widetilde{\theta}_\ve$ 
		& $\widehat{\theta}_\ve$ & $\widetilde{\theta}_\ve$ & $\widehat{\theta}_\ve$ & $\widetilde{\theta}_\ve$ \\
		1/5   & mean        &  (-0.85, 0.89) & (-0.82, 0.88) &  (-0.80, 0.88)& (-0.78, 0.86)   &  (-0.80, 0.86) & (-0.77, 0.86)     \\
		(k=20) &  resc. std.$^*$   &  (5.3, 1.7) &  (5.3, 1.7)  &  (5.4, 1.8)& (5.3, 1.7)    &  (5.4, 1.8)  & (5.4, 1.8)   \\
		1/10   & mean        &  (-1.07, 0.99) & (-1.02, 0.98) &  (-0.96, 0.96)& (-0.93, 0.94)   &  (-0.91, 0.94) & (-0.88, 0.93)   \\
		(k=10) &  resc. std.    &  (5.7, 2.2) &   (5.6, 2.2) &  (5.9, 2.2)& (5.8, 2.2)   &  (5.9, 2.3) & (5.8, 2.2)  \\
		1/20  &  mean       & (-1.20, 1.07) & (-1.12, 1.03)  & (-1.03, 1.00)&  (-0.99, 0.98)   & (-0.97, 0.98) &  (-0.94, 0.96)   \\
		(k=5)	&  resc. std.  & (6.1, 2.5) & (5.9, 2.4)  &  (6.2, 2.6) & (6.1, 2.5)    &  (5.8, 2.3)  & (5.7, 2.3) \\
		1/50  &mean      & (-1.33, 1.13) & (-1.21, 1.07)  &  (-1.08, 1.03)& (-1.02, 1.00)   &  (-1.02, 1.00)   & (-0.98, 0.99) \\
		(k=2)
		&  resc. std.    &   (6.6, 2.7) & (6.3, 2.6)  &  (6.5, 2.7)&  (6.3, 2.6)  &   (6.3, 2.6) & (6.2, 2.6)\\
		1/100 & mean       &  (-1.40, 1.16) &(-1.20, 1.09)  &  (-1.14, 1.06)&  (-1.06, 1.02)   & (-1.03, 1.01)&  (-0.99, 0.99)    \\
		(k=1)  & resc. std.     &  (6.8, 2.9) & (6.4, 2.7)  &  (6.3, 2.7)&(6.1, 2.6)    &    (6.4, 2.7) & (6.3, 2.7)
	\end{tblr}\\
	\footnotesize{$^*$ resc. std. is the value of the empirical standard deviation multiplied by $1/\ve$.}
\end{table}

%
%

\begin{table}[p]\caption{$\alpha=0.8$, $(\theta_0,\theta_1)=(-1,1)$, $T=10$, $h=10^{-2}$} \label{T : T 10 n 1000New}
	\centering \footnotesize
	\begin{tblr}{
			vline{1,2,3,5,7,9} = {1-12}{},
			vline{4,6,8} = {3-12}{},
			cell{2}{3-8} = {c=1}{c},
			hline{1,3,5,7,9,11,13} = {-}{},	
		}
		\SetCell[c=1,r=2]{c}\backslashbox{$\Delta$}{$\ve$}	&  & \SetCell[c=2]{c}1/10 & & \SetCell[c=2]{c}1/20 & & \SetCell[c=2]{c} 1/100 &\\
		& & $\widehat{\theta}_\ve$ & $\widetilde{\theta}_\ve$ 
		& $\widehat{\theta}_\ve$ & $\widetilde{\theta}_\ve$ & $\widehat{\theta}_\ve$ & $\widetilde{\theta}_\ve$ \\
		1/5   & mean        &  (-0.91, 0.02) & (-0.90, 0.90) &  (-0.89, 0.90)& (-0.89, 0.89)   &  (-0.89, 0.86) & (-0.89, 0.89)     \\
		(k=20) &  resc. std.$^*$   &  (1.4, 1.2) &  (1.4, 1.2)  &  (1.5, 1.2)& (1.5, 1.2)    &  (1.5, 1.3)  & (1.5, 1.3)   \\
		1/10   & mean        &  (-0.98, 0.98) & (-0.96, 0.96) &  (-0.96, 0.96)& (-0.95, 0.95)   &  (-0.95, 0.95) & (-0.94, 0.95)   \\
		(k=10) &  resc. std.    &  (1.6, 1.4) &   (1.6, 1.4) &  (1.7, 1.5)& (1.6, 1.4)   &  (1.6, 1.4) & (1.6, 1.4)  \\
		1/20  &  mean       & (-1.05, 1.04) & (-1.01, 1.00)  & (-0.99, 0.99)&  (-0.98, 0.98)   & (-0.98, 0.98) &  (-0.91, 0.94)   \\
		(k=5)	&  resc. std.  & (1.7, 1.4) & (1.6, 1.4)  &  (1.7, 1.5) & (1.7, 1.5)    &  (1.8, 1.5)  & (1.7, 1.5) \\
		1/50  &mean      & (-1.11, 1.09) & (-1.03, 1.03)  &  (-1.03, 1.02)& (-1.00, 1.00)   &  (-0.99, 0.99)   & (-0.99, 0.99) \\
		(k=2)
		&  resc. std.    &   (1.8, 1.6) & (1.8, 1.5)  &  (1.8, 1.6)&  (1.8, 1.5)  &   (1.8, 1.6) & (1.8, 1.5)\\
		1/100 & mean       &  (-1.18, 1.15) &(-1.05, 1.04)  &  (-1.05, 1.04)&  (-1.01, 1.01)   & (-1.00, 1.00)&  (-1.00, 1.00)    \\
		(k=1)  & resc. std.     &  (2.0, 1.7) & (1.8, 1.6)  &  (1.9, 1.6)&(1.9, 1.5)    &    (1.8, 1.6) & (1.8, 1.6)
	\end{tblr}\\
	\footnotesize{$^*$ resc. std. is the value of the empirical standard deviation multiplied by $1/\ve$.}
\end{table}

%
%
%

\begin{table}[p]\caption{$\alpha=0.8$, $(\theta_0,\theta_1)=(-1,1)$, $T=50$, $h=10^{-2}$} \label{T : T 50 n 5000New}
	\centering \footnotesize
	\begin{tblr}{
			vline{1,2,3,5,7,9} = {1-10}{},
			vline{4,6,8} = {3-10}{},
			cell{2}{3-8} = {c=1}{c},
			hline{1,3,5,7,9,11} = {-}{},	
		}
		\SetCell[c=1,r=2]{c}\backslashbox{$\Delta$}{$\ve$}	&  & \SetCell[c=2]{c}1/10 & & \SetCell[c=2]{c}1/20 & & \SetCell[c=2]{c} 1/100 &\\
		& & $\widehat{\theta}_\ve$ & $\widetilde{\theta}_\ve$ 
		& $\widehat{\theta}_\ve$ & $\widetilde{\theta}_\ve$ & $\widehat{\theta}_\ve$ & $\widetilde{\theta}_\ve$ \\
		1/10   & mean        &  (-0.97, 0.97) & (-0.93, 0.93) &  (-0.96, 0.96)& (-0.95, 0.95)   &  (-0.95, 0.95) & (-0.95, 0.95)   \\
		(k=10) &  resc. std.    &  (1.1, 1.0) &   (1.1, 1.0) &  (1.2, 1.2)& (1.2, 1.2)   &  (1.3, 1.2) & (1.3, 1.2)  \\
		1/20  &  mean       & (-1.05, 1.05) & (-0.96, 0.97)  & (-1.00, 1.00)&  (-0.97, 0.97)   & (-0.98, 0.98) &  (-0.98, 0.98)   \\
		(k=5)	&  resc. std.  & (1.2, 1.1) & (1.1, 1.0)  &  (1.3, 1.3) & (1.3, 1.2)    &  (1.4, 1.3)  & (1.4, 1.3) \\
		1/50  &mean      & (-1.18, 1.17) & (-1.00, 1.00)  &  (-1.06, 1.06)& (-1.00, 1.00)   &  (-1.00, 1.00)   & (-0.99, 0.99) \\
		(k=2)
		&  resc. std.    &   (1.3, 1.3) & (1.3, 1.2)  &  (1.5, 1.5)&  (1.4, 1.4)  &   (1.4, 1.3) & (1.4, 1.3)\\
		1/100 & mean       &  (-1.31, 1.29) &(-1.00, 1.00)  &  (-1.11, 1.10)&  (-1.00, 1.00)   & (-1.00, 1.00)&  (-1.00, 1.00)    \\
		(k=1)  & resc. std.     &  (1.5, 1.5) & (1.2, 1.1)  &  (1.5, 1.5)&(1.4, 1.3)    &    (1.5, 1.4) & (1.5, 1.4)
	\end{tblr}\\
	\footnotesize{$^*$ resc. std. is the value of the empirical standard deviation multiplied by $1/\ve$.}
\end{table}

%
%
%

\begin{table}[p]\caption{$\alpha=0.95$, $(\theta_0,\theta_1)=(-1,1)$, $T=1$, $h=10^{-2}$}  \label{T : T 1 n 100 alpha095New}
	\centering \footnotesize
	\begin{tblr}{
			vline{1,2,3,5,7,9} = {1-12}{},
			vline{4,6,8} = {3-12}{},
			cell{2}{3-8} = {c=1}{c},
			hline{1,3,5,7,9,11,13} = {-}{},	
		}
		\SetCell[c=1,r=2]{c}\backslashbox{$\Delta$}{$\ve$}	&  & \SetCell[c=2]{c}1/10 & & \SetCell[c=2]{c}1/20 & & \SetCell[c=2]{c} 1/100 &\\
		& & $\widehat{\theta}_\ve$ & $\widetilde{\theta}_\ve$ 
		& $\widehat{\theta}_\ve$ & $\widetilde{\theta}_\ve$ & $\widehat{\theta}_\ve$ & $\widetilde{\theta}_\ve$ \\
		1/5   & mean        &  (-0.95, 0.91) & (-0.93, 0.91) &  (-0.90, 0.91)& (-0.90, 0.90)   &  (-0.89, 0.86) & (-0.88, 0.91)     \\
		(k=20) &  resc. std.$^*$   &  (5.9, 1.7) &  (5.9, 1.7)  &  (5.8, 1.8)& (5.8, 1.8)    &  (5.9, 1.8)  & (5.9, 1.8)   \\
		1/10   & mean        &  (-1.07, 0.99) & (-1.05, 0.99) &  (-0.99, 0.96)& (-0.98, 0.96)   &  (-0.94, 0.95) & (-0.93, 0.95)   \\
		(k=10) &  resc. std.    &  (5.9, 2.1) &   (5.9, 2.1) &  (5.5, 2.1)& (5.5, 2.1)   &  (5.3, 2.0) & (5.3, 2.0)  \\
		1/20  &  mean       & (-1.17, 1.04) & (-1.15, 1.04)  & (-1.02, 0.99)&  (-1.01, 0.99)   & (-0.98, 0.98) &  (-0.97, 0.97)   \\
		(k=5)	&  resc. std.  & (5.9, 2.2) & (5.9, 2.2)  &  (5.6, 2.2) & (5.5, 2.2)    &  (5.8, 2.3)  & (5.5, 2.2) \\
		1/50  &mean      & (-1.18, 1.06) & (-1.15, 1.06)  &  (-1.02, 1.00)& (-1.00, 0.99)   &  (-1.00, 0.99)   & (-0.99, 0.99) \\
		(k=2)
		&  resc. std.    &   (6.0, 2.4) & (6.0, 2.3)  &  (5.6, 2.3)&  (5.6, 2.3)  &   (5.6, 2.3) & (5.5, 2.3)\\
		1/100 & mean       &  (-1.20, 1.08) &(-1.17, 1.07)  &  (-1.05, 1.02)&  (-1.04, 1.01)   & (-1.00, 1.00)&  (-0.99, 1.00)    \\
		(k=1)  & resc. std.     &  (6.0, 2.4) & (6.0, 2.4)  &  (5.7, 2.4)&(5.7, 2.4)    &    (5.6, 2.3) & (6.3, 2.7)
	\end{tblr}\\
	\footnotesize{$^*$ resc. std. is the value of the empirical standard deviation multiplied by $1/\ve$.}
\end{table}

%
%
%

\begin{table}[p]\caption{$\alpha=0.95$, $(\theta_0,\theta_1)=(-1,1)$, $T=10$, $h=10^{-2}$}  \label{T : T 10 n 1000 alpha095New}
	\centering \footnotesize
	\begin{tblr}{
			vline{1,2,3,5,7,9} = {1-12}{},
			vline{4,6,8} = {3-12}{},
			cell{2}{3-8} = {c=1}{c},
			hline{1,3,5,7,9,11,13} = {-}{},	
		}
		\SetCell[c=1,r=2]{c}\backslashbox{$\Delta$}{$\ve$}	&  & \SetCell[c=2]{c}1/10 & & \SetCell[c=2]{c}1/20 & & \SetCell[c=2]{c} 1/100 &\\
		& & $\widehat{\theta}_\ve$ & $\widetilde{\theta}_\ve$ 
		& $\widehat{\theta}_\ve$ & $\widetilde{\theta}_\ve$ & $\widehat{\theta}_\ve$ & $\widetilde{\theta}_\ve$ \\
		1/5   & mean        &  (-0.94, 0.94) & (-0.94, 0.93) &  (-0.91, 0.91)& (-0.91, 0.91)   &  (-0.90, 0.91) & (-0.90, 0.90)     \\
		(k=20) &  resc. std.$^*$   &  (1.4, 1.2) &  (1.4, 1.2)  &  (1.3, 1.2)& (1.3, 1.2)    &  (1.3, 1.2)  & (1.3, 1.2)   \\
		1/10   & mean        &  (-0.99, 0.99) & (-0.99, 0.98) &  (-0.96, 0.96)& (-0.96, 0.96)   &  (-0.95, 0.95) & (-0.93, 0.95)   \\
		(k=10) &  resc. std.    &  (1.5, 1.3) &   (1.5, 1.3) &  (1.5, 1.3)& (1.5, 1.3)   &  (1.5, 1.4) & (1.5, 1.4)  \\
		1/20  &  mean       & (-1.02, 1.01) & (-1.01, 1.00)  & (-0.99, 0.99)&  (-0.99, 0.99)   & (-0.98, 0.98) &  (-0.97, 0.97)   \\
		(k=5)	&  resc. std.  & (1.5, 1.4) & (1.5, 1.4)  &  (1.6, 1.4) & (1.5, 1.5)    &  (1.5, 1.4)  & (1.5, 1.4) \\
		1/50  &mean      & (-1.04, 1.04) & (-1.03, 1.03)  &  (-1.00, 1.00)& (-1.00, 1.00)   &  (-0.99, 0.99)   & (-0.99, 0.99) \\
		(k=2)
		&  resc. std.    &   (1.6, 1.5) & (1.6, 1.5)  &  (1.5, 1.5)&  (1.5, 1.4)  &   (1.7, 1.5 ) & (1.7, 1.5)\\
		1/100 & mean       &  (-1.05, 1.05) &(-1.04, 1.03)  &  (-1.00, 1.00)&  (-1.00, 1.00)   & (-1.00, 1.00)&  (-1.00, 1.00)    \\
		(k=1)  & resc. std.     &  (1.7, 1.5) & (1.6, 1.5)  &  (1.6, 1.5)&(1.6, 1.5)    &    (1.7, 1.5) & (1.7, 1.5)
	\end{tblr}\\
	\footnotesize{$^*$ resc. std. is the value of the empirical standard deviation multiplied by $1/\ve$.}
\end{table}

%
%
%

\begin{table}[p]\caption{$\alpha=0.95$, $(\theta_0,\theta_1)=(-1,1)$, $T=50$, $h=10^{-2}$} \label{T : T 50 n 5000 alpha095New}
	\centering \footnotesize
	\begin{tblr}{
			vline{1,2,3,5,7,9} = {1-10}{},
			vline{4,6,8} = {3-10}{},
			cell{2}{3-8} = {c=1}{c},
			hline{1,3,5,7,9,11} = {-}{},	
		}
		\SetCell[c=1,r=2]{c}\backslashbox{$\Delta$}{$\ve$}	&  & \SetCell[c=2]{c}1/10 & & \SetCell[c=2]{c}1/20 & & \SetCell[c=2]{c} 1/100 &\\
		& & $\widehat{\theta}_\ve$ & $\widetilde{\theta}_\ve$ 
		& $\widehat{\theta}_\ve$ & $\widetilde{\theta}_\ve$ & $\widehat{\theta}_\ve$ & $\widetilde{\theta}_\ve$ \\
		1/10   & mean        &  (-0.97, 0.97) & (-0.97, 0.97) &  (-0.96, 0.96)& (-0.96, 0.96)   &  (-0.95, 0.95) & (-0.95, 0.95)   \\
		(k=10) &  resc. std.    &  (1.1, 1.1) &   (1.1, 1.1) &  (1.2, 1.2)& (1.2, 1.2)   &  (1.3, 1.3) & (1.3, 1.3)  \\
		1/20  &  mean       & (-1.01, 1.01) & (-0.99, 0.99)  & (-0.99, 0.99)&  (-0.98, 0.98)   & (-0.98, 0.98) &  (-0.98, 0.98)   \\
		(k=5)	&  resc. std.  & (1.2, 1.2) & (1.2, 1.2)  &  (1.3, 1.3) & (1.3, 1.3)    &  (1.4, 1.4)  & (1.4, 1.4) \\
		1/50  &mean      & (-1.04, 1.04) & (-1.02, 1.02)  &  (-1.01, 1.01)& (-1.00, 1.00)   &  (-1.00, 1.00)   & (-0.99, 0.99) \\
		(k=2)
		&  resc. std.    &   (1.2, 1.2) & (1.2, 1.1)  &  (1.4, 1.3)&  (1.4, 1.3)  &   (1.4, 1.4) & (1.4, 1.4)\\
		1/100 & mean       &  (-1.06, 1.06) &(-1.02, 1.02)  &  (-1.02, 1.02)&  (-1.01, 1.01)   & (-1.00, 1.00)&  (-0.99, 0.99)    \\
		(k=1)  & resc. std.     &  (1.2, 1.2) & (1.2, 1.2)  &  (1.4, 1.4)&(1.4, 1.4)    &    (1.4, 1.4) & (1.5, 1.4)
	\end{tblr}\\
	\footnotesize{$^*$ resc. std. is the value of the empirical standard deviation multiplied by $1/\ve$.}
\end{table}

%
%

\begin{table}[p]\caption{$\alpha=0.6$, $(\theta_0,\theta_1)=(-1,1)$, $T=1$, $h=10^{-2}$}  \label{T : T 1 n 100 alpha06New}
	\centering \footnotesize
	\begin{tblr}{
			vline{1,2,3,5,7,9} = {1-12}{},
			vline{4,6,8} = {3-12}{},
			cell{2}{3-8} = {c=1}{c},
			hline{1,3,5,7,9,11,13} = {-}{},	
		}
		\SetCell[c=1,r=2]{c}\backslashbox{$\Delta$}{$\ve$}	&  & \SetCell[c=2]{c}1/10 & & \SetCell[c=2]{c}1/20 & & \SetCell[c=2]{c} 1/100 &\\
		& & $\widehat{\theta}_\ve$ & $\widetilde{\theta}_\ve$ 
		& $\widehat{\theta}_\ve$ & $\widetilde{\theta}_\ve$ & $\widehat{\theta}_\ve$ & $\widetilde{\theta}_\ve$ \\
		1/5   & mean        &  (-0.67, 0.82) & (-0.64, 0.80) &  (-0.80, 0.88)& (-0.78, 0.86)   &  (-0.62, 0.80) & (-0.61, 0.79)     \\
		(k=20) &  resc. std.$^*$   &  (4.5, 1.7) &  (4.4, 1.6)  &  (5.7, 2.3)& (5.4, 2.1)    &  (4.7, 1.7)  & (4.6, 1.7)   \\
		1/10   & mean        &  (-0.95, 0.94) & (-0.86, 0.91) &  (-0.81, 0.89)& (-0.78, 0.88)   &  (-0.79, 0.88) & (-0.77, 0.88)   \\
		(k=10) &  resc. std.    &  (5.3, 2.2) &   (5.0, 2.1) &  (5.9, 2.2)& (5.8, 2.2)   &  (5.7, 2.4) & (5.4, 2.3)  \\
		1/20  &  mean       & (-1.22, 1.07) & (-1.01, 0.99)  & (-1.00, 0.98)&  (-0.91, 0.94)   & (-0.90, 0.95) &  (-0.87, 0.93)   \\
		(k=5)	&  resc. std.  & (6.0, 2.5) & (5.4, 2.4)  &  (6.5, 2.8) & (6.2, 2.7)    &  (6.4, 2.7)  & (6.1, 2.6) \\
		1/50  &mean      & (-1.65, 1.27) & (-1.24, 1.06)  &  (-1.19, 1.07)& (-1.02, 1.00)   &  (-0.99, 0.99)   & (-0.95, 0.97) \\
		(k=2)
		&  resc. std.    &   (7.5, 3.4) & (6.1, 2.8)  &  (7.6, 3.3)&  (6.9, 3.0)  &   (7.5, 3.4) & (7.2, 3.2)\\
		1/100 & mean       &  (-2.1, 1.45) &(-1.17, 1.08)  &  (-1.36, 1.15)&  (-1.05, 1.02)   & (-1.03, 1.01)&  (-0.98, 0.99)    \\
		(k=1)  & resc. std.     &  (8.9, 3.8) & (6.4, 2.9)  &  (8.2, 3.6)&(7.0, 3.2)    &    (7.9, 3.5) & (7.5, 3.3)
	\end{tblr}\\
	\footnotesize{$^*$ resc. std. is the value of the empirical standard deviation multiplied by $1/\ve$.}
\end{table}

%
%

%
%

\begin{table}[p]\caption{$\alpha=0.6$, $(\theta_0,\theta_1)=(-1,1)$, $T=10$, $h=10^{-2}$}  \label{T : T 10 n 1000 alpha06New}
	\centering \footnotesize
	\begin{tblr}{
			vline{1,2,3,5,7,9} = {1-12}{},
			vline{4,6,8} = {3-12}{},
			cell{2}{3-8} = {c=1}{c},
			hline{1,3,5,7,9,11,13} = {-}{},	
		}
		\SetCell[c=1,r=2]{c}\backslashbox{$\Delta$}{$\ve$}	&  & \SetCell[c=2]{c}1/10 & & \SetCell[c=2]{c}1/20 & & \SetCell[c=2]{c} 1/100 &\\
		& & $\widehat{\theta}_\ve$ & $\widetilde{\theta}_\ve$ 
		& $\widehat{\theta}_\ve$ & $\widetilde{\theta}_\ve$ & $\widehat{\theta}_\ve$ & $\widetilde{\theta}_\ve$ \\
		1/5   & mean        &  (-0.79, 0.83) & (-0.75, 0.80) &  (-0.83, 0.86)& (-0.81, 0.85)   &  (-0.84, 0.87) & (-0.83, 0.87)     \\
		(k=20) &  resc. std.$^*$   &  (1.4, 1.0) &  (1.3, 1.0)  &  (1.5, 1.2)& (1.5, 1.2)    &  (1.7, 1.3)  & (1.7, 1.3)   \\
		1/10   & mean        &  (-0.91, 0.93) & (-0.82, 0.86) &  (-0.91, 0.93)& (-0.89, 0.91)   &  (-0.92, 0.93) & (-0.91, 0.93)   \\
		(k=10) &  resc. std.    &  (1.6, 1.3) &   (1.4, 1.1) &  (1.7, 1.4)& (1.7, 1.3)   &  (1.9, 1.5) & (1.9, 1.5)  \\
		1/20  &  mean       & (-1.07, 1.05) & (-0.89, 0.91)  & (-0.99, 0.99)&  (-0.93, 0.94)   & (-0.96, 0.97) &  (-0.96, 0.97)   \\
		(k=5)	&  resc. std.  & (2.0, 1.5) & (1.7, 1.3)  &  (2.0, 1.5) & (1.9, 1.6)    &  (2.2, 1.7)  & (2.2, 1.7) \\
		1/50  &mean      & (-1.35, 1.27) & (-0.92, 0.94)  &  (-1.11, 1.08)& (-0.97, 0.97)   &  (-0.99, 0.99)   & (-0.98, 0.99) \\
		(k=2)
		&  resc. std.    &   (2.3, 1.8) & (1.7, 1.3)  &  (2.4, 1.9 )&  (2.1, 1.7)  &   (2.3, 1.8) & (2.3, 1.8)\\
		1/100 & mean       &  (-1.71, 1.54) &(-0.97, 0.97)  &  (-1.23, 1.18)&  (-0.99, 0.99)   & (-1.00, 1.00)&  (-0.99, 0.99)    \\
		(k=1)  & resc. std.     &  (3.0, 2.3) & (1.8, 1.4)  &  (2.7, 2.1)&(2.1, 1.7)    &    (2.3, 1.7) & (2.2, 1.7)
	\end{tblr}\\
	\footnotesize{$^*$ resc. std. is the value of the empirical standard deviation multiplied by $1/\ve$.}
\end{table}

%
%
%
%
%

\begin{table}[p]\caption{$\alpha=0.6$, $(\theta_0,\theta_1)=(-1,1)$, $T=50$, $h=10^{-2}$} \label{T : T 50 n 5000 alpha060New}
	\centering \footnotesize
	\begin{tblr}{
			vline{1,2,3,5,7,9} = {1-10}{},
			vline{4,6,8} = {3-10}{},
			cell{2}{3-8} = {c=1}{c},
			hline{1,3,5,7,9,11} = {-}{},	
		}
		\SetCell[c=1,r=2]{c}\backslashbox{$\Delta$}{$\ve$}	&  & \SetCell[c=2]{c}1/10 & & \SetCell[c=2]{c}1/20 & & \SetCell[c=2]{c} 1/100 &\\
		& & $\widehat{\theta}_\ve$ & $\widetilde{\theta}_\ve$ 
		& $\widehat{\theta}_\ve$ & $\widetilde{\theta}_\ve$ & $\widehat{\theta}_\ve$ & $\widetilde{\theta}_\ve$ \\
		1/10   & mean        &  (-0.86, 0.87) & (-0.72, 0.74) &  (-0.91, 0.92)& (-0.86, 0.87)   &  (-0.94, 0.95) & (-0.95, 0.95)   \\
		(k=10) &  resc. std.    &  (0.9, 0.8) &   (0.8, 0.7) &  (1.2, 1.1)& (1.1, 1.0)   &  (1.3, 1.2) & (1.3, 1.2)  \\
		1/20  &  mean       & (-1.06, 1.06) & (-0.78, 0.80)  & (-1.01, 1.01)&  (-0.90, 0.91)   & (-0.98, 0.98) &  (-0.97, 0.97)   \\
		(k=5)	&  resc. std.  & (1.1, 1.0) & (0.8, 0.9)  &  (1.4, 1.2) & (1.2, 1.1)    &  (1.5, 1.4)  & (1.5, 1.3) \\
		1/50  &mean      & (-1.48, 1.43) & (-0.85, 0.86)  &  (-1.20, 1.18)& (-0.93, 0.94)   &  (-1.00, 1.00)   & (-0.99, 0.99) \\
		(k=2)
		&  resc. std.    &   (1.5, 1.4) & (0.9, 0.8)  &  (1.6, 1.5)&  (1.4, 1.3)  &   (1.5, 1.3) & (1.5, 1.3)\\
		1/100 & mean       &  (-2.00, 1.90) &(-0.89, 0.90)  &  (-1.42, 1.38)&  (-0.95, 0.96)   & (-1.02, 1.02)&  (-0.99, 0.99)    \\
		(k=1)  & resc. std.     &  (1.9, 1.7) & (0.9, 0.8)  &  (1.9, 1.8)&(1.2, 1.2)    &    (1.5, 1.4) & (1.5, 1.4)
	\end{tblr}\\
	\footnotesize{$^*$ resc. std. is the value of the empirical standard deviation multiplied by $1/\ve$.}
\end{table}

\printbibliography

\end{document}